\newtheorem{theorem}{Theorem}[section]
\newtheorem{proposition}[theorem]{Proposition}
\newtheorem{lemma}[theorem]{Lemma}
\newtheorem{corollary}[theorem]{Corollary}
\newtheorem{remark}[theorem]{Remark}
\newtheorem{conjecture}[theorem]{{\bf Conjecture}}
\begin{document}
\leftline{UDC 512.7}
\begin{center}
	\Large \textbf{Rank Two Sheaves With Low Discriminant on the Fano Threefold of Index 2 and Degree~5}
\end{center}
\begin{center}
	\Large \textbf{D.~A.~Vassiliev\footnote{The study has been funded within the framework of the HSE University Basic Research Program.}}
\end{center}

\begin{abstract}
In this paper rank 2 Gieseker semistable sheaves $E$ on the Fano threefold $X_5$ of index 2 and degree 5 with maximal third Chern class $c_3(E)$ for all possible low values of discriminant $\overline{\Delta}_H(E)\le 40$ are described. The work uses the theory of tilt-stability and Bridgeland stability conditions on smooth projective threefolds. Also a conjecture about the rank 2 sheaves on $X_5$ with maximal $c_3$ and discriminant big enough is proposed.	
\end{abstract}

\textbf{Keywords:} Fano threefolds, semistable coherent sheaves, Bridgeland stability conditions.

\section{Introduction}

The first general results on boundedness of the Chern class $c_3$ of rank 2 sheaves on a Fano threefold were obtained by R. Hartshorne in the case of $\mathbb P^3$ \cite{H}. In this work he obtained exact upper bounds on $c_3$ of stable reflexive rank 2 sheaves on $\mathbb P^3$ \cite[Theorem 8.2 (b), (d)]{H}. In 2018 B. Schmidt proved that the bounds of Hartshorne hold true for any Gieseker semistable rank 2 sheaves without an assumption of reflexivity and described semistable sheaves with maximal $c_3$ \cite{Sch18}. His work used the theory of tilt-stability and Bridgeland stability in the bounded derived category $\mathrm D^b(\mathbb P^3)$. The sheaves $E$ with maximal $c_3$, except for several special cases of low discriminant, were described as extensions
\begin{equation}\label{Schmidt 1}0\to\mathcal O_{\mathbb P^3}(-1)^{\oplus 2}\to E\to\mathcal O_V(m)\to 0\end{equation}
for $c_1(E)=-1$, respectively,
\begin{equation}\label{Schmidt 2}0\to F\to E\to\mathcal O_V(m)\to 0\end{equation}
for $c_1(E)=0$ (\cite[Theorem 3.1]{Sch18}). Here $V\subset\mathbb P^3$ is a projective plane and $F$ is a stable sheaf with $c_1(F)=-1$, the lowest possible $c_2(F)=1$ and maximal $c_3(F)=1$. In a later article \cite{Sch23} Schmidt generalized these results to sheaves of rank at most 4.

In our joint article with A. S. Tikhomirov \cite{Fano} we applied the methods of Schmidt to the smooth quadric threefold $X_2$. In this case we similarly were able to get a complete description of rank 2 (Gieseker) semistable sheaves on $X_2$ with maximal third Chern class, except for a single case $(c_1,c_2)=(0,1)$ in which these sheaves are not reflexive \cite[Theorem 3.1]{Fano}. In our joint article we also constructed several new infinite series of moduli components of rank 2 stable sheaves on $\mathbb P^3,X_2,X_5$ and on Fano threefolds $X_4$, which are intersections of two general\footnote{Here and
below by a general element of a family parametrized by a base $B$ we mean an element corresponding to a point in a dense open subset of $B$.} quadric hypersurfaces in $\mathbb P^5$ \cite[Theorem 1.3]{Fano}. The general sheaves in these components are constructed as extensions of the form $0\to F_i\to E\to G_j\to 0,$ where $G_j$ are certain sheaves supported on divisors $S\subset X$ (here $X$ is one of the threefolds $X_1,X_2,X_4,X_5$) and $F_i$ is either the sheaf $\mathcal O_X(-1)^{\oplus 2}$ or a twisted rank 2 sheaf $F$ generalizing the sheaves $F$ from (\ref{Schmidt 2}). We also obtained bounds on $c_3$ of stable reflexive rank two sheaves $E$ on $X_4$ and $X_5$ with $c_1(E)=0$ under an additional assumption of being a so-called sheaf of general type \cite[Theorem 1.2]{Fano}. These bounds were obtained by considering behaviour of stable sheaves under birational transformations $X_4\dashrightarrow X_1$,
$X_5\dashrightarrow X_2$ and they may be not sharp.

An important feature of $\mathbb P^3$ and the quadric $X_2$, used in \cite{Sch18,Fano}, is that bounded derived categories of coherent sheaves on these varieties admit full strong exceptional collections. It is known that there are only four types of smooth Fano threefolds admitting a full exceptional collection of (the minimal possible) length 4: these are $\mathbb P^3,X_2,X_5$ and Fano threefolds $V_{22}$ of index 1 and genus 12 \cite[Lemma 3.5]{NVdB}, \cite{Faenzi}. In the present article we concentrate on the variety $X_5$, which from now on will be denoted by $X$. 

The variety $X=X_5$ is the unique smooth Fano threefold of index 2 and degree 5 \cite{Исковских}. It has $\mathrm{Pic}\ X=\mathbb Z$. There are some articles in which vector bundles on $X$ were studied. Arrondo and Costa \cite{AC} classified rank 2 vector bundles on $X$ without intermediate cohomology, that is, rank 2 vector bundles $E$ such that $H^i(E(t))=0$ for all $0<i<3=\dim X$ and any $t\in\mathbb Z$. Bundles with such a vanishing are also called arithmetically Cohen--Macaulay, or aCM for short. Faenzi \cite{Faenzi} studied moduli spaces of semistable aCM bundles on $X$, which turn out to be unirational, and classified rank 3 semistable aCM bundles. Kuznetsov \cite{K} and Faenzi \cite{Even and odd} studied instanton bundles on $X$, while Lee and Park \cite{LP} described stable Ulrich bundles on this variety.

We will use the description of $X$ as a linear section of Grassmannian $\mathrm{Gr}(2,5)\subset\mathbb P^9$ (embedded by Pl\"ucker) by a general linear subspace of codimension 3. Denote by $\mathcal U$ the restriction to $X$ of the universal rank 2 subbundle on $\mathrm{Gr}(2,5)$, resp. by $\mathcal Q$ the restriction to $X$ of the universal rank 3 quotient bundle on $\mathrm{Gr}(2,5)$. The class of a divisor corresponding to the ample generator of the Picard group of $X$ will be denoted by $H$. Recall that the discriminant of a coherent sheaf $E$ on a variety $X$ with respect to a polarization $H$ is defined by
$$\overline{\Delta}_H(E)=(H^2\cdot\mathrm{ch}_1(E))^2-2(H^3\cdot\mathrm{ch}_0(E))(H\cdot\mathrm{ch}_2(E)),$$
where $\mathrm{ch}_i$ are the components of the Chern character. The Bogomolov--Gieseker inequality implies that the discriminant of a semistable sheaf is non-negative, and classification of sheaves with maximal $c_3$ in \cite{Sch18}, \cite{Fano} goes by induction on the discriminant. In the case of $X_5$ the cohomology groups $H^{2i}(X,\mathbb Z)$ are isomorphic to $\mathbb Z$ and the Chern classes of objects can be considered as integers. More precisely, for an object $E\in\mathrm D^b(X)$ with Chern classes $c_1,c_2,c_3$ we have
\begin{equation}\label{ch}\mathrm{ch}(E)=\mathrm{rk}(E)+c_1H+\left(\frac{c_1^2}{2}-\frac{c_2}{5}\right)H^2+\left(\frac{c_1^3}{6}-\frac{c_1c_2}{10}+\frac{c_3}{10}\right)H^3.\end{equation}
In particular, for a rank 2 sheaf $E$ we have $\overline{\Delta}_H(E)=20c_2(E)-25c_1(E)^2$. Using the theory of tilt-stability and Bridgeland stability, we were able to obtain the following description of semistable rank 2 sheaves with $\overline{\Delta}_H(E)\le 40$ and maximal $c_3$.

\begin{theorem}\label{intro}Let $E$ be a Gieseker semistable sheaf of rank 2 on $X$ with Chern classes $c_1,c_2,c_3$.\\
	(1) If $c_1=-1$, then $c_2\ge 2$. \\
	(1.1) If $c_2=2$, then $c_3\le 0$. In case of equality we have $E\cong\mathcal U$.\\
	(1.2) If $c_2=3$, then $c_3\le 1$. In case of equality $E$ is included into an exact triple
	$$0\to \mathcal U(-1)\to\mathcal O_X(-1)^{\oplus 4}\to E\to 0.$$  
	(2) If $c_1=0$, then $c_2\ge 0$.\\
	(2.1) If $c_2=0$, then $c_3\le 0$. In case of equality $E\cong\mathcal O_X^{\oplus 2}$.\\
	(2.2) If $c_2=1$, then $c_3\le -2$, and in case of equality $E$ is included into an exact triple
	$$0\to E\to\mathcal O_X^{\oplus 2}\to\mathcal O_L(1)\to 0$$
	for a projective line $L\subset X$. \\
	(2.3) If $c_2=2$, then $c_3\le 0$. In case of equality $E$ is included into an exact triple
	$$0\to \mathcal Q(-1)^{\oplus 2}\to\mathcal U^{\oplus 4}\to E\to 0.$$
\end{theorem}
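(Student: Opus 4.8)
The plan is to run the tilt-stability and Bridgeland-stability machinery of \cite{Sch18} (as adapted to $X_2$ in \cite{Fano}) on the degree-five threefold $X$, reducing the whole statement to a finite wall analysis in the upper half-plane. First I would record the two lower bounds on $c_2$, which are immediate: a Gieseker semistable sheaf satisfies the Bogomolov--Gieseker inequality $\overline{\Delta}_H(E)\ge 0$, and since $\overline{\Delta}_H(E)=20c_2-25c_1^2$ by (\ref{ch}), the case $c_1=-1$ gives $20c_2\ge 25$, hence $c_2\ge 2$, while $c_1=0$ gives $c_2\ge 0$. Because the discriminant depends only on $(c_1,c_2)$, the hypothesis $\overline{\Delta}_H(E)\le 40$ singles out exactly the finitely many pairs listed, so in each case $(c_1,c_2)$ is fixed and only the bound on $c_3$ together with the description of the extremal sheaves remains.

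For the bound on $c_3$ I would fix $\beta$ slightly below the slope $\mu(E)=c_1/2$, so that $E$ defines an object of the tilted heart $\Coh^\beta(X)$ which is $\nu_{\alpha,\beta}$-tilt-stable for $\alpha\gg 0$. Decreasing $\alpha$, the object either remains tilt-stable down to the boundary of the relevant chamber or is destabilized on one of the finitely many nested semicircular walls attached to its numerical class. When $E$ stays stable, the bound on $c_3$ comes from the quadratic Bogomolov--Gieseker inequality $Q_{\alpha,\beta}(E)\ge 0$ satisfied by tilt-stable objects (the generalized inequality involving $\mathrm{ch}_3^\beta$, valid on $X$): evaluated at the optimal point it bounds $\mathrm{ch}_3^\beta(E)$ from above, and unwinding (\ref{ch}) turns this into the asserted bound on $c_3$. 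Since the walls that actually move $c_3$ are only visible after refining tilt-stability to the Bridgeland stability conditions $\sigma_{\alpha,\beta}$ on $\mathrm D^b(X)$, whose central charge involves $\mathrm{ch}_3^\beta$, I would run the destabilization analysis there: a wall yields a triple $0\to A\to E\to B\to 0$ with $A,B$ semistable of equal slope on the wall, with $\overline{\Delta}_H(A),\overline{\Delta}_H(B)\ge 0$, and with Chern characters summing to that of $E$. The smallness of the total discriminant leaves only a short list of candidate classes, and additivity $\mathrm{ch}_3(E)=\mathrm{ch}_3(A)+\mathrm{ch}_3(B)$ together with $Q_{\alpha,\beta}\ge 0$ on each factor produces the maximal value of $c_3$.

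To identify the extremal sheaves I would use that $X$ carries a full exceptional collection of length four built from $\mathcal O_X$, the restricted tautological bundles $\mathcal U,\mathcal Q$, and their twists; among sheaves of the minimal discriminants occurring here, these rigid bundles (together with the torsion sheaves $\mathcal O_L(1)$ on a line $L$, or sheaves supported on a hyperplane section) are the only possible stable destabilizers. Matching the candidate classes from the wall analysis against this list forces, in each equality case, exactly the configuration recorded in the statement: $E\cong\mathcal U$ in (1.1) and $E\cong\mathcal O_X^{\oplus 2}$ in (2.1), and the displayed exact triples built from $\mathcal U(-1),\mathcal O_X(-1),\mathcal Q(-1),\mathcal U$ in (1.2) and (2.3). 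One must of course also verify the converse, namely that each of these configurations does define a Gieseker semistable sheaf attaining the bound.

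The step I expect to be the main obstacle is the wall analysis itself: for every pair $(c_1,c_2)$ one has to enumerate all numerically admissible destabilizing subobjects on every wall, discard those incompatible with semistability or with $E$ being a genuine sheaf rather than a two-term complex, and then check that the surviving candidate is actually realized. The case $c_1=0$, $c_2=1$ is the most delicate, because its extremal sheaves are non-reflexive (the bound $c_3\le -2$ is negative): there $E$ is not an extension of locally free pieces but the kernel of a surjection $\mathcal O_X^{\oplus 2}\onto\mathcal O_L(1)$ onto a sheaf supported on a line, and one must verify directly that this kernel is semistable and realizes the extremal $c_3$.
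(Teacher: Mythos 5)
Your overall architecture (reduce to finitely many pairs $(c_1,c_2)$ via $\overline{\Delta}_H\ge 0$, then do a wall analysis in the $(\beta,\alpha)$-plane and identify extremal objects using the length-four exceptional collection) matches the paper's, and the $c_2$ bounds are obtained exactly as you say. But the mechanism you propose for the sharp bound on $c_3$ is not the one that works, and in several cases it would fail outright. You claim the bound comes from the generalized Bogomolov--Gieseker inequality $Q_{\alpha,\beta}(E)\ge 0$; in the paper this inequality is only strong enough in case (1.2), where $Q_{0,-1}(E)=41-150e\ge 0$ plus integrality gives $e\le\frac{7}{30}$. In the remaining cases the bound comes from a completely different source: one first rules out all semicircular walls (which requires the rank bound $\overline{\Delta}_H(F)\ge\mathrm{ch}_0(F)^2$ of Theorem \ref{rank bound} to make the list of destabilizers finite --- discriminant additivity alone does not bound the rank), concludes via Proposition \ref{2-stability} that $E$ is a $2$-stable sheaf with $H^0(E)=0$, gets $H^2(E)\cong\Hom(E,\mathcal O_X(-2)[1])^\vee=0$ from the absence of the corresponding wall, and then applies Grothendieck--Riemann--Roch to deduce $\chi(E)\le 0$. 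In case (2.2), for instance, $\chi(E)=5e+1\le 0$ gives $e\le-\frac 15$ (i.e.\ $c_3\le -2$), whereas $Q_{\alpha,\beta}(E)\ge 0$ evaluated anywhere on the locus $\nu_{\alpha,\beta}(E)=0$ only yields a bound with $e$ positive. Relatedly, your suggestion to run the destabilization analysis for the Bridgeland conditions $\lambda_{\alpha,\beta,s}$ is not what is needed: all wall-crossing is done in tilt-stability, where walls depend only on $\mathrm{ch}_{\le 2}$.

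The second gap is in the identification of the extremal sheaves, which you describe as ``matching candidate classes against rigid destabilizers.'' The actual tool is the statement that for suitable $(\beta,\alpha)$ in the region $D$ the heart $\mathfrak C=\langle\mathcal O_X(-1)[3],\mathcal Q(-1)[2],\mathcal U[1],\mathcal O_X\rangle$ is a tilt of $\mathcal A^{\alpha,\beta}(X)$ (Lemma \ref{two properties}, Proposition \ref{tilted heart}, Corollary \ref{heart}); combined with Lemma \ref{nu-lambda} this places $E[1]$ in $\mathfrak C$, where the Chern character determines the four-term complex uniquely (Remark \ref{complexes}) and hence forces $E\cong\mathcal U$ in (1.1), the resolution by $\mathcal U(-1)$ and $\mathcal O_X(-1)^{\oplus 4}$ in (1.2), and the resolution by $\mathcal Q(-1)^{\oplus 2}$ and $\mathcal U^{\oplus 4}$ in (2.3). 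This ``upper half-plane'' property of the exceptional collection is a substantive verification (a case analysis of where the central charges become $\mathbb R$-dependent) that your proposal does not account for. Finally, you correctly single out (2.2) as delicate, but the argument there is not a wall-matching at all: one shows $E$ is tilt-stable everywhere, proves $E$ cannot be reflexive because $c_3<0$, analyzes $0\to E\to E^{\vee\vee}\to T\to 0$, forces $E^{\vee\vee}\cong\mathcal O_X^{\oplus 2}$ by splitting it into rank-one pieces and applying Proposition \ref{trivial}, and then classifies $T$ as $\mathcal O_L(1)$ using the classification of sheaves on $\mathbb P^1$ and the stability of $E$. Auxiliary rank-three lemmas (Lemmas \ref{3 -2 3/5}, \ref{3 -1 1/10}, \ref{3 1 1/10} and Corollary \ref{no2}) are also needed to exclude certain rank-three destabilizers in cases (1.1) and (2.3); these do not appear in your outline.
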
 

In fact, in Theorem \ref{main} we obtained the given bounds on the Chern classes of tilt-semistable objects, but Proposition \ref{2-stability} implies that Gieseker semistable sheaves are tilt-semistable everywhere above the largest wall. Theorem \ref{main} is formulated in terms of Chern characters, which are related to Chern classes by formula (\ref{ch}). Comparing the above result with the earlier works, we see that the bundle $\mathcal U$ from (1.1) is aCM. The sheaves from (2.3), when they are locally free, are at the same time aCM, instanton in sense of \cite{K}, and twisted by $\mathcal O_X(-1)$ Ulrich bundles.

Theorem \ref{intro} can be considered as a step towards the description of all Gieseker semistable rank 2 sheaves on $X_5$ with a maximal class $c_3$, by induction on the discriminant. Based on the cases of $\mathbb P^3$ and $X_2$, we can make a conjecture that general such sheaves $E$ for $c_2(E)\gg 0$ can be described as extensions $0\to F\to E\to G\to 0$, where $F$ is isomorphic to $\mathcal O_X(-1)^{\oplus 2}$ or $\mathcal U$, and $G$ is an invertible sheaf on a smooth hyperplane section of $X$ (see Conjecture \ref{conj2} for a precise statement). The author also hopes that methods of \cite{Sch18}, \cite{Fano} and the present article can help to describe semistable sheaves with maximal $c_3$ on some other Fano threefolds, using semiorthogonal decompositions of their derived categories.

Let us outline the structure of the paper. In Section \ref{derived category} we, firstly, recall the notions and results from the theory of tilt-stability and Bridgeland stability on threefolds that we will use. One of the main features of these stability conditions is that they are defined not on the category of coherent sheaves $\mathrm{Coh}(X)$, but on other hearts of bounded t-structures on $\mathrm D^b(X)$. These hearts can be obtained from $\mathrm{Coh}(X)$ using the operation of \textit{tilting}. Next, we consider the full strong exceptional collection $(\mathcal O_X(-1),\mathcal Q(-1),\mathcal U,\mathcal O_X)$ and the heart 
$$\mathfrak C=\langle\mathcal O_{X}(-1)[3],\mathcal Q(-1)[2],\mathcal U[1],\mathcal O_{X}\rangle$$ 
of a bounded t-structure on $\mathrm D^b(X)$ obtained from it. In (\ref{D}) we define a region $D$ of the upper half-plane with coordinates $(\beta,\alpha)$. Using this region we prove the existence of a family of Bridgeland stability conditions, defined on hearts $\mathcal A^{\alpha,\beta}(X)$, which are related to the heart $\mathfrak C$ by the procedure of tilting (see Corollary \ref{heart}). In terms of \cite{VP}, the exceptional collection generating $\mathfrak C$ satisfies the \textit{upper half-plane condition} for these stability conditions. This property can be used to describe tilt-semistable or Bridgeland semistable objects, because objects of $\mathfrak C$ with a given Chern character can be described rather explicitly (see Remark \ref{complexes}).

In Section \ref{bounds} we, firstly, give a characterization of the shifts $\mathcal O_X(n)^{\oplus m},\mathcal O_X(n)^{\oplus m}[1]$ of twisted trivial sheaves on $X$ in terms of stability and Chern characters and prove that such shifts have maximal $\mathrm{ch}_3$ among objects with the same $\mathrm{ch}_{\le 2}$. Next, we prove Theorem \ref{main}, giving Theorem \ref{intro} above.

Finally, in Section \ref{conjectures}, we make a conjecture (Conjecture \ref{conj2}) about sheaves with maximal $c_3$ for $c_2\gg 0$ and prove some bounds on the third Chern character which may help to prove this conjecture, following the arguments in \cite{Sch18}. Also, in Proposition \ref{O_S(D)} we determine the invertible sheaves on smooth hyperplane sections of $X$ which correspond to rank 2 semistable sheaves on $X$ with maximal $c_3$ if Conjecture \ref{conj2} is true.

\textbf{Acknowledgements.} The author thanks God for the opportunity to make this work. Also he thanks Alexander S. Tikhomirov for introducing him to the subject and encouraging the study.

\newpage
\textbf{Notation.}
\begin{center}
	\begin{tabular}{ r l }
		$\mathbb C$ & base field\ \\
		$X=X_5$ & smooth section of the Grassmannian $\mathrm{Gr}(2,5)$ embedded by\\
		& Pl\"ucker into the space $\mathbb P^9$, by a linear subspace $\mathbb P^6$\\
		$\mathcal U$ & restriction to $X$ of tautological subbundle on $\mathrm{Gr}(2,5)$\\
		$\mathcal Q$ & restriction to $X$ of tautological quotient bundle on $\mathrm{Gr}(2,5)$\\
		$H$ & positive generator of the Picard group $\mathrm{Pic}\ X\simeq\mathbb Z$ --\\ &
		class of hyperplane section of $X\hookrightarrow
		\mathbb P^6$,\\
		$\mathrm{Coh}(X)$ & category of coherent sheaves on $X$\\
		$\mathrm D^b(X)$ & bounded derived category of coherent sheaves on
		$X$\\
		$\mathcal H^{i}(E)$ & $i$th cohomology sheaf of the complex $E\in\mathrm
		D^b(X)$ \\
		$H^{i}(E)$ & $i$th hypercohomology group $\mathbf R^i\Gamma(E)$ of $E\in\mathrm
		D^b(X)$\\
		$\mathrm{ch}(E)$ & Chern character of the object $E \in\mathrm D^b(X)$ \\
		$\mathrm{ch}_{\le m}(E)$ & $\mathrm{ch}_0(E)+ \ldots+ \mathrm{ch}_m(E)$
	\end{tabular}
\end{center}

\section{The Derived Category $\mathrm D^b(X)$ and Stability}
\label{derived category}
Let $X=X_5$ be the unique smooth Fano threefold of Picard number one, index 2 and degree 5. Recall that $X$ can be constructed as a codimension 3 linear section of Grassmannian $\mathrm{Gr}(2,5)$ embedded by Pl\"ucker in $\mathbb P^9$. The cohomology groups of $X$ are 
$$H^*(X,\mathbb Z)=H^0(X,\mathbb Z)\oplus H^2(X,\mathbb Z)\oplus H^4(X,\mathbb Z)\oplus H^6(X,\mathbb Z)=$$
$$=\mathbb Z[X]\oplus\mathbb Z[H]\oplus\mathbb Z[L]\oplus\mathbb Z[P],$$
where $H$ denotes the class of a hyperplane section of $X$, $L$ is the class of a projective line on $X$ and $P$ is the class of a point. We have $H\cdot L=P,H^2=5L,H^3=5P$. We will identify $H^0(X,\mathbb Z)$ and $H^6(X,\mathbb Z)$ with $\mathbb Z$ by putting $X=1$ and $P=1$.

Recall that the \textit{slope} of a coherent sheaf $E\in\mathrm{Coh}(X)$ is defined as $\mu(E)=\frac{
H^2\cdot\mathrm{ch}_1(E)}{H^3\cdot\mathrm{ch}_0(E)}$ (in case of division by 0 we set $\mu(E)=
+\infty$). A coherent sheaf $E$ is called
$\mu$-\textit{stable} (resp. $\mu$\textit{-semistable}) or \textit{slope stable} (resp. \textit{slope semistable}) if for any proper
subsheaf $0\ne F\hookrightarrow E$ the inequality $\mu(F)<\mu(E/F)$ (resp. $\mu(F)\le\mu(E/F)$) holds.

Following \cite{BMT} and \cite{Sch18}, we obtain a new heart of a bounded t-structure on $\mathrm{D}^b(X)$ using the process of tilting. In general, let $\mathcal A$ be the heart of a bounded t-structure on a triangulated category $\mathfrak D$. Recall that a \textit{torsion pair} is a pair $(\mathcal T,\mathcal F)$ of subcategories of the category $\mathcal A$ such that, firstly, $\mathrm{Hom}(T,F)=0$ for any $T\in\mathcal T,F\in\mathcal F$ and, secondly, for any $E\in\mathcal A$ there is a short exact sequence $0\to T\to E\to F\to 0$ in $\mathcal A$ with $T\in\mathcal T,F\in\mathcal F$.

For an arbitrary real number $\beta$ we have the following torsion pair on the category of coherent sheaves:
$$\mathcal T_\beta=\{E\in\mathrm{Coh}(X)\colon\mathrm{any}\; \mathrm{quotient}\;E\to G\;\mathrm{satisfies}\;\mu(G)>\beta\},$$
$$\mathcal F_\beta=\{E\in\mathrm{Coh}(X)\colon\mathrm{any}\;\mathrm{subsheaf}\;F\to E\;\mathrm{satisfies}\;\mu(F)\le\beta\}.$$

In general, if $(\mathcal T,\mathcal F)$ is a torsion pair on the heart $\mathcal A$ as defined above, its \textit{tilt} is the extension closure $\langle\mathcal F[1],\mathcal T\rangle\subset\mathfrak D$. It is also the heart of a bounded t-structure.
In our case the new heart $\mathrm{Coh}^\beta(X)$ is the extension closure $\langle\mathcal F_\beta[1],\mathcal T_\beta\rangle$. Equivalently, $\mathrm{Coh}^\beta(X)$ can be described as the full additive subcategory of the derived category, consisting of those complexes $E\in\mathrm D^b(X)$ for which $\mathcal H^{-1}(E)\in\mathcal F_{\beta},\mathcal H^{0}(E)\in\mathcal T_{\beta}$ and $\mathcal H^{i}(E)=0$ for $i\neq -1,0$.

Recall the notion of tilt-stability. Again $\beta$ is an arbitrary real number. The twisted Chern character is defined as $\mathrm{ch}^\beta=e^{-\beta H}\cdot\mathrm{ch}$. Explicitly, for $E\in \mathrm D^b(X)$ with $\mathrm{ch}(E)=r+cH+dH^2+eH^3$ we have
$$\mathrm{ch}_0^\beta(E)=r,\; \mathrm{ch}_1^\beta(E)=(c-\beta r)H,\; \mathrm{ch}_2^\beta(E)=(d-\beta c+\frac{\beta^2}{2} r)H^2,$$
$$\mathrm{ch}_3^\beta(E)=(e-\beta d+\frac{\beta^2}{2}c-\frac{\beta^3}{6}r)H^3.$$

Let $\alpha>0$ be a positive real number. Following Schmidt (\cite{Sch14,Sch19}), for an object $E\in\mathrm{Coh}^\beta(X)$ we define a central charge function
$$Z^{\mathrm{tilt}}_{\alpha,\beta}(E)=Z^{\mathrm{tilt}}_{\alpha,\beta}(\mathrm{ch}_0(E),\mathrm{ch}_1(E),\mathrm{ch}_2(E))=-H\cdot\mathrm{ch}_2^\beta(E)+\frac{\alpha^2}{2}H^3\cdot\mathrm{ch}_0^\beta(E)+i(H^2\cdot\mathrm{ch}_1^\beta(E)),$$
which is a $\mathbb Z$-linear map $Z^{\mathrm{tilt}}_{\alpha,\beta}:K(\mathrm D^b(X))=K(\mathrm{Coh}^\beta(X))\to\mathbb C$.

The tilt-slope of $E$ is defined by
$$\nu_{\alpha,\beta}(E)=-\frac{\Re(Z^{\mathrm{tilt}}_{\alpha,\beta}(E))}{\Im(Z^{\mathrm{tilt}}_{\alpha,\beta}(E))},$$
where division by 0 is again interpreted as giving $+\infty$. The object $E$ is called \textit{tilt-(semi)stable} (or $\nu_{\alpha,\beta}$-\textit{(semi)stable}) if for any non-trivial proper subobject $F\hookrightarrow E$ in $\mathrm{Coh}^\beta(X)$ the inequality $\nu_{\alpha,\beta}(F)<(\le)\,\nu_{\alpha,\beta}(E/F)$ holds.

We can consider $\mathrm{ch}_{\le 2}$ as a map to a lattice $\Lambda=\mathbb Z^2\oplus\frac{1}{10}\mathbb Z$ (if $\mathrm{ch}_{\le 2}(E)=r+cH+dH^2$ we map $E$ to $(r,c,d)$). Define a bilinear form $Q^{\mathrm{tilt}}$ on $\Lambda\otimes\mathbb R$ by \linebreak $Q^{\mathrm{tilt}}((r,c,d),(R,C,D))=Cc-Rd-Dr$. Denote $Q^{\mathrm{tilt}}(E):=Q^{\mathrm{tilt}}(\mathrm{ch}_{\le 2}(E),\mathrm{ch}_{\le 2}(E))$.

The \textit{Bogomolov-Gieseker inequality} holds for tilt-stability, asserting non-negativity of the discriminant $\overline{\Delta}_H(E)=(H^3)^2\cdot Q^{\mathrm{tilt}}(E)$ (see \cite[Theorem 2.1]{MS18}):
\begin{proposition}\label{BG} For any $\nu_{\alpha,\beta}$-semistable object $E\in\mathrm{Coh}^\beta(X)$ we have
\begin{multline}\overline{\Delta}_H(E):=(H^3)^2\cdot Q^{\mathrm{tilt}}(E)=(H^2\cdot\mathrm{ch}_1(E))^2-2(H^3\cdot\mathrm{ch}_0(E))(H\cdot\mathrm{ch}_2(E))=\\=(H^2\cdot\mathrm{ch}_1^\beta(E))^2-2(H^3\cdot\mathrm{ch}_0^\beta(E))(H\cdot\mathrm{ch}_2^\beta(E))\ge 0.
\end{multline}
\end{proposition}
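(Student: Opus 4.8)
The statement bundles two assertions: the middle equality, which rewrites $\overline{\Delta}_H(E)$ in terms of the twisted character, and the inequality $\overline{\Delta}_H(E)\ge 0$. I would dispose of the equality first, as it is purely numerical and holds for every object, semistable or not. Writing $\mathrm{ch}_{\le 2}(E)=(r,c,d)$ one has $\overline{\Delta}_H(E)=(H^3)^2(c^2-2rd)$, while the displayed formulas for $\mathrm{ch}^\beta$ amount to the substitution $c\mapsto c-\beta r$, $d\mapsto d-\beta c+\tfrac{\beta^2}{2}r$. Expanding $(c-\beta r)^2-2r\bigl(d-\beta c+\tfrac{\beta^2}{2}r\bigr)$, the $\beta$-linear and $\beta$-quadratic terms cancel in pairs, leaving $c^2-2rd$; thus $Q^{\mathrm{tilt}}$, and hence $\overline{\Delta}_H$, is independent of the chosen twist, which is exactly the middle equality.

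The real content is the inequality, and here the one external input I would allow myself is the classical Bogomolov--Gieseker inequality $\overline{\Delta}_H(F)\ge 0$ for $\mu$-semistable torsion-free sheaves $F$ on $X$. The first reduction is from semistable to \emph{stable} objects. For this I would record the single computation that the kernel of $Z^{\mathrm{tilt}}_{\alpha,\beta}$ on $\Lambda\otimes\mathbb R$ is the line spanned by $v_0=(1,\beta,\tfrac{\alpha^2+\beta^2}{2})$, the common solution of $\Re Z^{\mathrm{tilt}}_{\alpha,\beta}=\Im Z^{\mathrm{tilt}}_{\alpha,\beta}=0$, on which $Q^{\mathrm{tilt}}(v_0)=\beta^2-(\alpha^2+\beta^2)=-\alpha^2<0$. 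So $Q^{\mathrm{tilt}}$ is negative definite on $\ker Z^{\mathrm{tilt}}_{\alpha,\beta}$; a Hodge-index type lemma then shows that on the classes of a fixed finite $\nu_{\alpha,\beta}$-slope (which share a common kernel vector and lie in one component of the cone $\{Q^{\mathrm{tilt}}\ge 0\}$) the form $Q^{\mathrm{tilt}}$ is non-negative on a sum once it is non-negative on each summand. Hence the inequality for a $\nu_{\alpha,\beta}$-semistable $E$ follows from the same inequality for the stable Jordan--Hölder factors, all of which share the slope of $E$.

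It remains to treat a $\nu_{\alpha_0,\beta_0}$-stable object $E$. The plan is to move to the large volume limit: fix $\beta_0$ and increase $\alpha$. Using the nested-semicircle structure of walls in the $(\beta,\alpha)$-plane one controls how the stability of $E$ can change along the vertical ray $\beta=\beta_0$; at a wall $E$ acquires a short exact sequence $0\to F\to E\to G\to 0$ in $\Coh^{\beta_0}(X)$ with $F,G$ of the same tilt-slope and strictly smaller $\Im Z^{\mathrm{tilt}}$, and the numerical relation between $\overline{\Delta}_H(E)$ and $\overline{\Delta}_H(F),\overline{\Delta}_H(G)$ feeds an induction that terminates in the large-volume chamber $\alpha\gg 0$. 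There $\nu_{\alpha,\beta_0}$-semistable objects with $\Im Z^{\mathrm{tilt}}>0$ are, up to shift, slope-semistable torsion-free sheaves (or torsion sheaves, for which the bound is immediate), so that classical Bogomolov--Gieseker applies and gives $\overline{\Delta}_H\ge 0$. I expect the genuine obstacle to be precisely this last geometric step: proving local finiteness and the nesting of walls, pinning down the behaviour of the destabilizing factors across a wall, and identifying the large-volume limit of tilt-stability with slope-stability. The final quadratic inequality is then a formality, and all the difficulty lies in the bookkeeping of the wall-and-chamber structure that transports the classical Bogomolov bound down to an arbitrary $(\alpha_0,\beta_0)$.
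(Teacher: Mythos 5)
Your proposal is correct in outline, but it is worth saying up front that the paper does not prove this proposition at all: it is quoted verbatim from \cite[Theorem 2.1]{MS18}, which in turn rests on the argument of Bayer--Macr\`i--Toda. What you have written is essentially a reconstruction of that standard proof rather than an alternative to anything in the present paper. Your treatment of the middle equality is complete and correct: with $\mathrm{ch}_{\le 2}(E)=(r,c,d)$ one has $(c-\beta r)^2-2r\bigl(d-\beta c+\tfrac{\beta^2}{2}r\bigr)=c^2-2rd$, so $Q^{\mathrm{tilt}}$ is twist-invariant. Your computation that $\ker Z^{\mathrm{tilt}}_{\alpha,\beta}$ is spanned by $\bigl(1,\beta,\tfrac{\alpha^2+\beta^2}{2}\bigr)$ and that $Q^{\mathrm{tilt}}$ takes the value $-\alpha^2<0$ there is also right, and it is exactly the input needed for the support-property (Hodge-index type) lemma that reduces the semistable case to the stable one. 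For the remaining induction, two small sharpenings would make your sketch match the literature precisely: the natural induction parameter is $H^2\cdot\mathrm{ch}_1^\beta(E)$, which by the analogue of Proposition \ref{actual}(3) strictly decreases for the Jordan--H\"older factors appearing at a wall; and since $\overline{\Delta}_H(E)$ does not depend on $(\alpha,\beta)$, one does not need to ``transport'' the inequality back down from the large-volume chamber --- it suffices that $E$ be semistable at a single point, so the dichotomy is simply ``$E$ stays semistable as $\alpha\to\infty$ (classical Bogomolov--Gieseker applies)'' versus ``$E$ breaks at a wall (induction plus the Hodge-index lemma applies).'' The genuinely technical ingredients you defer --- local finiteness and nesting of walls, the precise form of the support-property lemma, and the identification of the large-volume limit with slope-(semi)stability (Lemma \ref{minimal} in the paper) --- are indeed where all the work lies, and they are exactly the content of the cited references; your sketch does not contain a wrong step, but it is a sketch, not a proof.
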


The $\nu_{\alpha,\beta}$-stability is a so-called \textit{very weak stability condition} \cite{Sch19}. In particular, it satisfies the \textit{support property} with respect to $Q^{\mathrm{tilt}}$. It means that, firstly, the Bogomolov-Gieseker inequality holds for semistable objects, and secondly, for any non-zero $v\in\Lambda\otimes\mathbb R$ with $Z^{\mathrm{tilt}}_{\alpha,\beta}(v)=0$ we have $Q^{\mathrm{tilt}}(v)<0$. Also Harder-Narasimhan filtrations exist for $\nu_{\alpha,\beta}$-stability. For a \textit{Bridgeland stability condition} $(\mathcal A,Z)$ defined on a heart $\mathcal A$ of a bounded t-structure, in addition, we have $Z(E)\neq 0$ for any nonzero $E\in\mathcal A$ \cite{Sch19}.

In order to construct Bridgeland stability conditions on $X$ we make another tilt, as proposed by Bayer, Macri and Toda for smooth projective threefolds (\cite{BMT}). Let
$$\mathcal T'_{\alpha,\beta}=\{E\in\mathrm{Coh}^\beta(X)\colon\text{any quotient}\, E\twoheadrightarrow G\;\mathrm{satisfies}\;\nu_{\alpha,\beta}(G)>0\},$$
$$\mathcal F'_{\alpha,\beta}=\{E\in\mathrm{Coh}^\beta(X)\colon\text{any non-trivial subobject}\, F\hookrightarrow E\;\mathrm{satisfies}\;\nu_{\alpha,\beta}(F)\le 0\},$$
and set $\mathcal A^{\alpha,\beta}(X)=\langle \mathcal F'_{\alpha,\beta}[1],\mathcal T'_{\alpha,\beta}\rangle$. For any $s\in\mathbb R$ define
$$Z_{\alpha,\beta,s}:=-\mathrm{ch}_3^\beta+s\alpha^2H^2\cdot\mathrm{ch}_1^\beta+i (\alpha H\cdot\mathrm{ch}_2^\beta-\frac{\alpha^3H^3}{2}\cdot\mathrm{ch}_0^\beta),$$
$$\lambda_{\alpha,\beta,s}:=-\frac{\Re(Z_{\alpha,\beta,s})}{\Im(Z_{\alpha,\beta,s})},$$
division by 0 is again interpreted as giving $+\infty$.

An object $E \in \mathcal A^{\alpha, \beta}(X)$ is called \textit{$\lambda_{\alpha, \beta, s}$-(semi)stable}, if for any non-trivial subobject $F \hookrightarrow E$ we have $\lambda_{\alpha, \beta, s}(F) < (\le)\  \lambda_{\alpha, \beta, s}(E)$.	

It is known that for $s>\frac 16$ $(\mathcal A^{\alpha,\beta}(X),Z_{\alpha,\beta,s})$ is a Bridgeland stability condition \cite[Corollary 0.2]{Li}. This result follows from the fact that the following \textit{generalized Bogomolov-Gieseker inequality} holds for $X$ (\cite[Theorem 0.1]{Li}, \cite[Theorem 5.4]{BMS16}):
\begin{proposition}\label{BMT} Assume that $E$ is $\nu_{\alpha,\beta}$-semistable. Then
$$Q_{\alpha,\beta}(E)=\alpha^2\overline{\Delta}_H(E)+4(H\cdot\mathrm{ch}_2^\beta)^2-6(H^2\cdot\mathrm{ch}_1^\beta(E))\mathrm{ch}_3^\beta(E)\ge 0.$$ \end{proposition}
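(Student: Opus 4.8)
The plan is to follow the reduction scheme of Bayer--Macrì--Stellari \cite{BMS16} to isolate a single extremal case, and then to settle that case using the explicit geometry of $X=X_5$, as in Li \cite{Li}. \emph{First} I would reduce from all $\nu_{\alpha,\beta}$-semistable objects to $\nu_{\alpha,\beta}$-\emph{stable} objects satisfying in addition $\nu_{\alpha,\beta}(E)=0$. The reduction to stable objects rests on a superadditivity property of $Q_{\alpha,\beta}$: restricted to classes of a fixed tilt-slope, the symmetric bilinear form associated to $Q_{\alpha,\beta}$ is non-negative on aligned classes — this follows from the support property and the classical Bogomolov--Gieseker inequality of Proposition \ref{BG}, which controls the $\alpha^2\overline{\Delta}_H$ term. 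Consequently, if $E$ is strictly semistable with Jordan--Hölder factors $E_i$ of equal tilt-slope, then $Q_{\alpha,\beta}(E)\ge\sum_i Q_{\alpha,\beta}(E_i)$, so the inequality for the stable factors yields it for $E$.

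\emph{Next} I would reduce to $\nu_{\alpha,\beta}(E)=0$ by a deformation argument in the upper half-plane with coordinates $(\beta,\alpha)$. Starting from a stable object at a given $(\beta_0,\alpha_0)$, one moves along a path keeping $E$ stable and driving its tilt-slope toward zero. Either the path reaches the locus $\nu_{\alpha,\beta}(E)=0$, or $E$ is destabilized upon crossing a numerical wall; in the latter case the destabilizing subobject and quotient have strictly smaller discriminant, and one applies induction on $\overline{\Delta}_H$, invoking the support property to guarantee that the wall-and-chamber structure is locally finite and that only finitely many walls are met.

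At a point where $E$ is $\nu_{\alpha,\beta}$-stable with $\nu_{\alpha,\beta}(E)=0$, the real part of the tilt central charge vanishes, so
\[
H\cdot\mathrm{ch}_2^\beta(E)=\tfrac{\alpha^2}{2}\,H^3\cdot\mathrm{ch}_0^\beta(E),
\]
and the target inequality $Q_{\alpha,\beta}(E)\ge 0$ collapses to an upper bound on $\mathrm{ch}_3^\beta(E)$ in terms of $\mathrm{ch}_{\le 2}(E)$. Here I would perform the second tilt, identifying such $E$ with objects of the heart $\mathcal A^{\alpha,\beta}(X)$, and reformulate the problem as: among stable objects with prescribed $\mathrm{ch}_{\le 2}$, bound $\mathrm{ch}_3^\beta$ from above.

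The \textbf{main obstacle} is establishing this last $\mathrm{ch}_3$-bound, and it is exactly the step where the specific threefold enters. Here I would exploit that $\mathrm D^b(X)$ admits the full strong exceptional collection $(\mathcal O_X(-1),\mathcal Q(-1),\mathcal U,\mathcal O_X)$: every object is filtered by these exceptional bundles, which pins down the finitely many Chern characters that could support a stable object with $\nu_{\alpha,\beta}=0$ and small invariants. Following Li, one checks that the only candidate violators are shifts and twists of the exceptional bundles and of ideal-type objects, for which $\mathrm{ch}_3^\beta$ is computed directly and seen to obey the bound; the remaining classes are excluded using the vanishing of the relevant $\Hom$- and $\Ext$-groups among the members of the collection. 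The delicate part is organizing this finite case analysis so that no potential destabilizer is overlooked, which is precisely the content of \cite[Theorem 0.1]{Li}.
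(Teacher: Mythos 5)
The paper does not actually prove this Proposition: it is imported from \cite[Theorem 0.1]{Li} and \cite[Theorem 5.4]{BMS16}, so the only meaningful comparison is with the proofs in those references. Your first two reductions do match their structure: passing to stable objects via Jordan--H\"older factors and the additivity/positivity of $Q_{\alpha,\beta}$ on aligned classes, and reducing to the locus $\nu_{\alpha,\beta}(E)=0$ by deforming $(\beta,\alpha)$ and inducting on the non-negative integer $\overline{\Delta}_H$, is precisely the content of \cite[Theorem 5.4]{BMS16}.

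The gap is in your final step, which is neither what Li does nor workable as stated. First, Li's argument makes no use of the exceptional collection: it bounds $\mathrm{ch}_3^\beta(E)$ for stable $E$ with $\nu_{\alpha,\beta}(E)=0$ by combining Hom-vanishings between $E$ and line bundles $\mathcal O_X(k)$ (forced by stability) with Riemann--Roch computations of the Euler characteristics $\chi(\mathcal O_X(k),E)$; this is exactly why his theorem covers all Fano threefolds of Picard number one and index two, almost none of which admit full exceptional collections. Second, your claim that the filtration by $(\mathcal O_X(-1),\mathcal Q(-1),\mathcal U,\mathcal O_X)$ "pins down the finitely many Chern characters" of potential violators is false: every class $\mathrm{ch}_{\le 2}$ with non-negative discriminant admits points on its hyperbola $\nu_{\alpha,\beta}=0$, so infinitely many classes must be controlled, and membership in $\mathfrak C$ only fixes the multiplicities $a,b,c,d$ of Remark \ref{complexes} without by itself bounding $\mathrm{ch}_3$. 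Third, identifying stable objects (up to shift) with objects of $\mathfrak C$ requires the half-plane property of Lemma \ref{two properties}, which this paper establishes only on the restricted region $D$ of (\ref{D}), whereas the Proposition is needed for all $(\beta,\alpha)\in\mathbb H$; and that identification in turn presupposes that $(\mathcal A^{\alpha,\beta}(X),Z_{\alpha,\beta,s})$ is a Bridgeland stability condition, which is deduced \emph{from} the inequality you are trying to prove, so this route is circular. The step you flag as the "main obstacle" is the entire theorem of \cite{Li}, and your proposed way around it does not close.
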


Fix $v\in\Lambda=\mathbb Z^2\oplus\frac{1}{10}\mathbb Z$. There is a locally finite wall-and-chamber structure in the upper half-plane $\mathbb H:=\{(\beta,\alpha)\in\mathbb R^2\ |\ \alpha>0\}$ such that for an object $E\in\mathrm{Coh}^\beta(X)$ with $\mathrm{ch}_{\le 2}(E)=v$ tilt-stability of $E$ does not change when $(\beta,\alpha)$ vary within a chamber (\cite[Proposition B.5]{BMS16}). A \textit{numerical wall} with respect to $v\in\Lambda$ is a non-trivial proper subset $W$ of $\mathbb H$ given by an equation of the form $\nu_{\alpha,\beta}(v)=\nu_{\alpha,\beta}(w)$ for another element $w\in\Lambda$. We denote this numerical wall by $W(v,w)$. A numerical wall $W$ is called an \textit{actual wall} (or simply a \textit{wall}) if the set of semistable objects with class $v$ changes at $W$. The structure of walls in tilt-stability is well understood. We have the following Proposition.

\begin{proposition}[Numerical properties of walls]\label{numerical} Let $v=(v_0,v_1,v_2)\in\Lambda$ be a fixed class with $\overline{\Delta}_H(v)\ge 0$. All numerical walls in the following statements are with respect to $v$.
	\begin{enumerate} 
		\item Numerical walls are either semicircles with center on the $\beta$-axis or rays parallel to the $\alpha$-axis. If $v_0\neq 0$, there is exactly one numerical vertical wall given by $\beta = v_1/v_0$. If $v_0 = 0$, there are no vertical walls.
		\item The curve $\nu_{\alpha, \beta}(v) = 0$ intersects all semicircular walls at their highest point.
		\item If $v_0\neq 0$, then the curve $\nu_{\alpha, \beta}(v) = 0$ is a hyperbola, which may be degenerate if $\overline{\Delta}_H(v)=0$. Its asymptotes are the lines $\beta-\alpha=v_1/v_0$ and $\beta+\alpha=v_1/v_0$. The semicircular numerical walls are nested along one of the two branches of the hyperbola. 
		\item If there exist semicircular actual walls, then there is a largest such wall.
		\item If $\overline{\Delta}_H(v)>0$, then the set of $(\beta,\alpha)$ for which $Q_{\alpha,\beta}(v_0,v_1H,v_2H^2,v_3H^3)=0$ is the semicircular numerical wall $W((v_0,v_1,v_2),(5v_1,10v_2,15v_3))$. The locus where $Q_{\alpha,\beta}(v_0,v_1H,v_2H^2,v_3H^3)<0$ is the semidisk bounded by $Q_{\alpha,\beta}(v_0,v_1H,v_2H^2,v_3H^3)=0$ and $\beta$-axis.
	\end{enumerate}
\end{proposition}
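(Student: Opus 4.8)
The plan is to reduce every assertion to the single explicit formula for the tilt-slope and then to elementary geometry of conics in the $(\beta,\alpha)$-plane. Writing $\mathrm{ch}_{\le2}(E)=(r,c,d)$ and using $H^3=5$, the definitions of $Z^{\mathrm{tilt}}_{\alpha,\beta}$ and $\nu_{\alpha,\beta}$ give
$$\nu_{\alpha,\beta}(v)=\frac{v_2-\beta v_1+\tfrac{\beta^2-\alpha^2}{2}v_0}{v_1-\beta v_0}$$
for $v=(v_0,v_1,v_2)$. First I would clear denominators in $\nu_{\alpha,\beta}(v)=\nu_{\alpha,\beta}(w)$; introducing the minors $D_{ij}:=v_iw_j-v_jw_i$ and collecting the terms carrying $\alpha^2$ and $\beta^2$, the equation of the numerical wall $W(v,w)$ collapses to the master form
$$\frac{D_{01}}{2}\,(\alpha^2+\beta^2)-D_{02}\,\beta+D_{12}=0.$$
Statement (1) is then immediate: if $D_{01}\ne0$ we complete the square to a circle with centre $\beta=D_{02}/D_{01}$ on the $\beta$-axis (a semicircle in $\mathbb H$), while $D_{01}=0$ leaves a $\beta$-linear equation, i.e. a vertical ray. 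A vertical wall forces $(v_0,v_1)\parallel(w_0,w_1)$; when $v_0\ne0$ this gives $w_1=(v_1/v_0)w_0$, hence $D_{12}=(v_1/v_0)D_{02}$ and $\beta=D_{12}/D_{02}=v_1/v_0$ independently of $w$, proving uniqueness, whereas when $v_0=0$ one checks that $D_{01}=0$ forces $D_{02}=0$ as well (or $\Im Z^{\mathrm{tilt}}_{\alpha,\beta}(v)\equiv0$), so no genuine vertical wall survives.

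For (2) and (3) the key is the Laplace identity $v_2D_{01}-v_1D_{02}+v_0D_{12}=0$, which is the cofactor expansion of the $3\times3$ determinant with repeated row $v$. Plugging the apex $\beta=D_{02}/D_{01}$ of a semicircular wall into $\nu_{\alpha,\beta}(v)=0$ and using the master equation to evaluate $\tfrac{\beta^2-\alpha^2}{2}=D_{12}/D_{01}$ there, this identity shows $\nu_{\alpha,\beta}(v)=0$ at the apex, which is (2). Solving $\nu_{\alpha,\beta}(v)=0$ for $v_0\ne0$ and completing the square yields
$$\Big(\beta-\tfrac{v_1}{v_0}\Big)^2-\alpha^2=\frac{v_1^2-2v_0v_2}{v_0^2}=\frac{\overline{\Delta}_H(v)}{25\,v_0^2}\ge0,$$
a hyperbola (degenerate exactly when $\overline{\Delta}_H(v)=0$) with the asymptotes $\beta\pm\alpha=v_1/v_0$, which is (3) apart from the nesting claim.

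The nesting I would deduce from a pencil-of-circles viewpoint. Fixing $v$, the triples $(D_{01},D_{02},D_{12})$ attached to the various $w$ are linear in $w$ and satisfy the single relation above, so the semicircular walls $W(v,\cdot)$ form a pencil of circles. Computing its base points, e.g. by intersecting the member $\beta=v_1/v_0$ with the member $\alpha^2+\beta^2=2v_2/v_0$, gives $\beta=v_1/v_0$ and $\alpha^2=-\overline{\Delta}_H(v)/(25v_0^2)\le0$; the base points are thus imaginary, the pencil is non-intersecting, and its circles are pairwise disjoint, hence nested, the two branches of the hyperbola corresponding to apexes on either side of the vertical wall $\beta=v_1/v_0$. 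For (4) I would combine this with the support property from Proposition \ref{BG}: on an actual wall a destabilising subobject $w$ and the quotient $v-w$ are both $\nu_{\alpha,\beta}$-semistable, whence $0\le\overline{\Delta}_H(w)\le\overline{\Delta}_H(v)$, which bounds the radius of an actual wall from above; together with the local finiteness of the wall-and-chamber structure recalled above this forces a largest semicircular actual wall.

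Finally, for (5) I would expand $Q_{\alpha,\beta}$ from Proposition \ref{BMT} in the coordinates $(v_0,v_1,v_2,v_3)$. Since $\overline{\Delta}_H$, $(H\cdot\mathrm{ch}_2^\beta)^2$ and $(H^2\cdot\mathrm{ch}_1^\beta)\,\mathrm{ch}_3^\beta$ are all independent of $\alpha$ apart from the explicit factor $\alpha^2\overline{\Delta}_H$, the function $Q_{\alpha,\beta}$ is again a conic in which $\alpha$ enters only as $\alpha^2$, with coefficient $\overline{\Delta}_H(v)$. A direct comparison then identifies $Q_{\alpha,\beta}=-10\big(\tfrac{D_{01}}{2}(\alpha^2+\beta^2)-D_{02}\beta+D_{12}\big)$ for $w=(5v_1,10v_2,15v_3)$, whose minors are $D_{01}=-\overline{\Delta}_H(v)/5$, $D_{02}=15v_0v_3-5v_1v_2$ and $D_{12}=15v_1v_3-10v_2^2$; so the zero locus of $Q_{\alpha,\beta}$ is exactly the semicircular wall $W((v_0,v_1,v_2),(5v_1,10v_2,15v_3))$, and because $D_{01}<0$ the region $Q_{\alpha,\beta}<0$ is the enclosed semidisk. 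I expect the only genuinely non-formal steps to be the nesting in (3) and the existence of a largest wall in (4): once the master equation and the determinant identity are in hand, parts (1), (2) and (5) are bookkeeping, but recognising the walls as a non-intersecting pencil of circles and controlling the radius of an actual wall is where the real content lies.
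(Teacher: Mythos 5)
Your reduction of everything to the master equation $\tfrac{D_{01}}{2}(\alpha^2+\beta^2)-D_{02}\beta+D_{12}=0$ is correct, and I checked the computations that carry the weight: the cross-multiplied wall equation, the Laplace identity $v_2D_{01}-v_1D_{02}+v_0D_{12}=0$ at the apex, the hyperbola $(\beta-v_1/v_0)^2-\alpha^2=\overline{\Delta}_H(v)/(25v_0^2)$, and the identification of $Q_{\alpha,\beta}$ with $-10$ times the wall equation for $w=(5v_1,10v_2,15v_3)$, minors included. So (1), (2), (5) and the conic part of (3) are established. This is a genuinely different presentation from the paper, whose proof consists entirely of citations to Schmidt (Theorem 3.3 and Lemma 7.3 of \cite{Sch19}, Lemma 2.10 of \cite{Sch18}) together with the remark that the computations transfer from $\mathbb P^3$ to $X_5$ after rescaling the lattice; your derivation is the content behind those citations, made self-contained, and the pencil-of-circles viewpoint is a clean way to organize the nesting. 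One small repair there: ``pairwise disjoint, hence nested'' is not a valid implication for two circles in the plane; you also need that two walls whose apexes lie on the same branch both enclose the vertex of that branch on the $\beta$-axis, which follows from $|\beta_c-v_1/v_0|=\sqrt{\rho^2+h^2}$ (so the near crossing point of each wall lies between the vertex and $v_1/v_0$). Two disjoint circles with a common interior point are nested, and that completes (3).

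The genuine gap is in (4). The claim that ``$0\le\overline{\Delta}_H(w)\le\overline{\Delta}_H(v)$ bounds the radius of an actual wall from above'' is false as stated: taking $w_0=1$, $w_1=N\to\infty$ and $w_2$ adjusted so that $\overline{\Delta}_H(w)$ stays in $[0,\overline{\Delta}_H(v)]$ produces numerical walls $W(v,w)$ of arbitrarily large radius (in these examples it is $\overline{\Delta}_H(v-w)$ that blows up). To bound the radius you need the full package: writing $q=\overline{\Delta}_H/25$ with polarization $b$, one has the exact formula $\rho(E,F)^2=\bigl(b(v,w)^2-q(v)q(w)\bigr)/(v_0w_1-v_1w_0)^2$, and the three inequalities $q(w)\ge 0$, $q(v-w)\ge 0$ and $q(w)+q(v-w)\le q(v)$ (the last being Proposition \ref{actual}(2), equivalent to $q(w)\le b(v,w)$) force $0\le b(v,w)\le q(v)$ and hence bound the numerator by $q(v)^2$, while the integrality $v_0w_1-v_1w_0\in\mathbb Z\setminus\{0\}$ bounds the denominator below by $1$; only then does local finiteness of the wall-and-chamber structure give a largest actual wall. (Alternatively one can invoke the rank bound of Proposition \ref{actual}(1).) So your strategy for (4) is the right one, but the single sentence you give for it skips the one step in the whole proposition that is not formal bookkeeping.
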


\textit{Proof.} The statements (1)-(3) were proven by Schmidt \cite[Theorem 3.3]{Sch19} for the case of $X=\mathbb P^3$. In that case $v,w\in\mathbb Z^2\oplus\frac 12\mathbb Z$, but the calculations can be done for any $v,w\in\mathbb R^3$. The statement (4) follows from \cite[Lemma 7.3]{Sch19}. The statement (5) for the case of $\mathbb P^3$ is \cite[Lemma 2.10]{Sch18}, and the calculation can be done in our case too (we only need to multiply the vector $(v_1,2v_2,3v_3)$ by 5 so that $5(v_1,2v_2,3v_3)\in\Lambda$). $\square$

If an object $E\in\mathrm{Coh}^\beta(X),$ $\mathrm{ch}_{\le 2}(E)=v$ is destabilized at the wall $W$, that is, if $W$ belongs to the boundary of the set 
$\{(\beta,\alpha)\in\mathbb H\ |\ E\ \mathrm{is}\ \nu_{\alpha,\beta}\mathrm{-semistable}\}$, 
then either $W$ is a vertical wall, or there exists a short exact sequence $0\to F\to E\to G\to 0$ in $\mathrm{Coh}^\beta(X)$ for $(\beta,\alpha)\in W$ such that $$W=\{(\beta,\alpha)\in\mathbb H\ |\ \nu_{\alpha,\beta}(E)=\nu_{\alpha,\beta}(F)\}$$ (\cite[Proposition B.5]{BMS16}). Following Schmidt, we denote a wall given by such an equation by $W(E,F)$. If $W(E,F)$ is semicircular, we denote its radius by $\rho(E,F)$. 
We will also use the properties of walls described in the following Proposition.

\begin{proposition}[Properties of actual walls]\label{actual}Let $0\to F\to E\to G\to 0$ be an exact sequence in $\mathrm{Coh}^\beta(X)$ defining a wall $W$.
	\begin{enumerate}
		\item If $W$ is semicircular and $\mathrm{ch}_0(F)>\mathrm{ch}_0(E)\ge 0$, then
		$$\rho(E,F)^2\le\frac{\overline{\Delta}_H(E)}{4H^3\cdot\mathrm{ch}_0(F)(H^3\cdot(\mathrm{ch}_0(F)-\mathrm{ch}_0(E))}.$$
		\item If $W$ is semicircular, then $\overline{\Delta}_H(F)+\overline{\Delta}_H(G)<\overline{\Delta}_H(E).$
		\item For any point $(\beta,\alpha)\in W$, we have $0\le\mathrm{ch}_1^\beta(F)\le\mathrm{ch}_1^\beta(E)$. In case of equality on either end, the wall is vertical. In particular, if $H^2\cdot\mathrm{ch}_1^{\beta_0}(E)>0$ has the minimal positive value among objects of $\mathrm{Coh}^{\beta_0}(X)$, then there is no actual wall that intersects the line $\beta=\beta_0$.
		\item If $\mathrm{ch}_0(E)>0$, then either $\mathrm{ch}_0(F)>0$ and $\mu(F)\le\mu(E)$ or $\mathrm{ch}_0(G)>0$ and $\mu(G)\le\mu(E)$. 
		\item Suppose that $W$ is semicircular. If $\mathrm{ch}_0(E)>0, \mathrm{ch}_0(F)>0$ and $\mu(F)\le\mu(E)$, then $\beta_-(E)<\beta_-(F)\le\mu(F)<\mu(E)$. 
	\end{enumerate}
\end{proposition}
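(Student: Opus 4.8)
The unifying idea is to test every inequality at the apex (highest point) $(\beta_c,\rho)$ of the semicircular wall $W$, where by Proposition \ref{numerical}(2) the curve $\nu_{\alpha,\beta}=0$ meets $W$, so that $\nu_{\beta_c,\rho}(E)=\nu_{\beta_c,\rho}(F)=\nu_{\beta_c,\rho}(G)=0$. Writing $r=H^3\cdot\mathrm{ch}_0$, $q^\beta=H^2\cdot\mathrm{ch}_1^\beta$ and $p^\beta=H\cdot\mathrm{ch}_2^\beta$, the vanishing of $\nu$ reads $p^{\beta_c}=\tfrac{\rho^2}{2}r$ for each of $E,F,G$, whence $\overline{\Delta}_H=(q^{\beta_c})^2-2rp^{\beta_c}=(q^{\beta_c})^2-\rho^2 r^2$. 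I will everywhere combine this with two inputs: the positivity $q^\beta\ge 0$ on every object of $\mathrm{Coh}^\beta(X)$ (this is $\Im Z^{\mathrm{tilt}}_{\alpha,\beta}\ge 0$ on the heart) and the Bogomolov--Gieseker inequality $\overline{\Delta}_H\ge0$ for the tilt-semistable $E,F,G$ (Proposition \ref{BG}). I would prove (3) first, as the others rely on it. Additivity in the exact sequence gives $q^\beta(E)=q^\beta(F)+q^\beta(G)$ with both summands $\ge0$, which sandwiches $0\le q^\beta(F)\le q^\beta(E)$. Equality on an end, say $q^\beta(F)=0$, forces $\nu_{\alpha,\beta}(F)=+\infty=\nu_{\alpha,\beta}(E)$, hence $q^\beta(E)=0$; the locus $q^\beta(E)=0$ is the line $\beta=\mu(E)$, so $W$ is vertical. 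For the ``in particular'', an actual wall through $\beta=\beta_0$ produces a subobject with $0\le q^{\beta_0}(F)\le q^{\beta_0}(E)$, and minimality of the positive value $q^{\beta_0}(E)$ both forbids a strict intermediate value and (via $\beta_0\ne\mu(E)$, since $q^{\beta_0}(E)>0$) excludes the vertical alternative.

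For (1) and (2) I evaluate at the apex. In (1), $\mathrm{ch}_0(F)>\mathrm{ch}_0(E)\ge0$ gives $r(G)=r(E)-r(F)<0$; substituting $p^{\beta_c}=\tfrac{\rho^2}{2}r$ into $\overline{\Delta}_H(F),\overline{\Delta}_H(G)\ge0$ and extracting square roots (legitimate because $q^{\beta_c}(F),q^{\beta_c}(G)\ge0$ by (3)) yields $\rho\,r(F)\le q^{\beta_c}(F)$ and $\rho\big(r(F)-r(E)\big)\le q^{\beta_c}(G)$; adding these, using $q^{\beta_c}(F)+q^{\beta_c}(G)=q^{\beta_c}(E)$, and feeding the sum back into $\overline{\Delta}_H(E)=(q^{\beta_c}(E))^2-\rho^2 r(E)^2$ gives after a short manipulation $4\rho^2 r(F)\big(r(F)-r(E)\big)\le\overline{\Delta}_H(E)$, which is the claim. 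For (2), the apex formula produces the identity
\[
\overline{\Delta}_H(E)-\overline{\Delta}_H(F)-\overline{\Delta}_H(G)=2\big(q^{\beta_c}(F)\,q^{\beta_c}(G)-\rho^2 r(F)r(G)\big),
\]
and $\overline{\Delta}_H(F),\overline{\Delta}_H(G)\ge0$ give the reverse Cauchy--Schwarz estimate $q^{\beta_c}(F)q^{\beta_c}(G)\ge\rho^2|r(F)r(G)|\ge\rho^2 r(F)r(G)$, using $q^{\beta_c}(F),q^{\beta_c}(G)>0$ from (3). The bracket is therefore positive unless $\overline{\Delta}_H(F)=\overline{\Delta}_H(G)=0$; but then $\mathrm{ch}_{\le2}(F)$ and $\mathrm{ch}_{\le2}(E)$ span a totally isotropic subspace of the signature-$(2,1)$ form $\overline{\Delta}_H$, which is one-dimensional, so the two vectors are proportional and $\nu_{\alpha,\beta}(F)=\nu_{\alpha,\beta}(E)$ holds identically, contradicting $W$ being a (semicircular) wall. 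Hence the inequality is strict.

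Part (4) is a pure slope computation, independent of the apex. Put $\delta:=q^\beta(F)r(G)-q^\beta(G)r(F)$, which does not depend on $\beta$; one checks $\mu(F)\le\mu(E)\Leftrightarrow\delta\le0$ when $r(F),r(E)>0$, and $\mu(G)\le\mu(E)\Leftrightarrow\delta\ge0$ when $r(G),r(E)>0$. Since $r(E)>0$, at least one of $r(F),r(G)$ is positive. If both are positive, the sign of $\delta$ selects the factor of slope $\le\mu(E)$. If instead $r(F)>0\ge r(G)$, then heart positivity makes $q^\beta(F)r(G)\le0$ and $-q^\beta(G)r(F)\le0$, so $\delta\le0$ and $\mu(F)\le\mu(E)$; the case $r(G)>0\ge r(F)$ is symmetric.

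Finally, (5) is where I expect the real work. Since $W=W(E,F)=W(F,E)$, Proposition \ref{numerical}(2) applied to both $E$ and $F$ places the apex $(\beta_c,\rho)$ simultaneously on the hyperbolas $(\beta-\mu(E))^2-\alpha^2=\overline{\Delta}_H(E)/r(E)^2$ and $(\beta-\mu(F))^2-\alpha^2=\overline{\Delta}_H(F)/r(F)^2$. By (3), $q^{\beta_c}(F)>0$ together with $r(F)>0$ forces $\beta_c<\mu(F)<\mu(E)$ (the strictness $\mu(F)<\mu(E)$ holds because $\mu(F)=\mu(E)$ would make $W(E,F)$ vertical rather than semicircular), so the apex sits on the \emph{left} branch of each hyperbola. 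Writing $A=\overline{\Delta}_H(E)/r(E)^2$ and $B=\overline{\Delta}_H(F)/r(F)^2$, equating $\beta_c=\mu(E)-\sqrt{\rho^2+A}=\mu(F)-\sqrt{\rho^2+B}$ yields $\mu(E)-\mu(F)=\sqrt{\rho^2+A}-\sqrt{\rho^2+B}$, which forces $A>B$; and because $s\mapsto\sqrt{s+A}-\sqrt{s+B}$ is strictly decreasing for $A>B$, one gets $\mu(E)-\mu(F)<\sqrt{A}-\sqrt{B}$, i.e. $\mu(E)-\tfrac{\sqrt{\overline{\Delta}_H(E)}}{r(E)}<\mu(F)-\tfrac{\sqrt{\overline{\Delta}_H(F)}}{r(F)}$, which is exactly $\beta_-(E)<\beta_-(F)$; the remaining $\beta_-(F)\le\mu(F)$ is immediate from $\beta_-(F)=\mu(F)-\sqrt{B}$. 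The hard part is precisely this bookkeeping — pinning the apex to the correct branch via (3) and running the monotonicity comparison — whereas the conceptual crux of the whole proposition is the apex evaluation, after which (1), (2) and (5) all collapse to elementary inequalities.
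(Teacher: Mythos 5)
Your argument is correct, and it is genuinely different in character from what the paper does: the paper proves this proposition almost entirely by citation --- (1) is \cite[Lemma 2.4]{MS18}, (3)--(5) are quoted from \cite[Proposition 2.7]{Sch23} and \cite[Lemma 3.6]{BMSZ}, and only (2) gets an in-line argument, which runs through the support property (negativity of $Q^{\mathrm{tilt}}$ on $\ker Z^{\mathrm{tilt}}_{\alpha,\beta}$) to conclude $Q^{\mathrm{tilt}}(\mathrm{ch}_{\le 2}(F),\mathrm{ch}_{\le 2}(G))>0$. You instead give a self-contained proof organized around a single device: evaluating at the apex $(\beta_c,\rho)$ of the semicircular wall, where $p^{\beta_c}=\tfrac{\rho^2}{2}r$ for $E$, $F$ and $G$, so that $\overline{\Delta}_H=(q^{\beta_c})^2-\rho^2r^2$ and all of (1), (2), (5) reduce to elementary inequalities combined with $q^{\beta}\ge 0$ on the heart and Bogomolov--Gieseker for the semistable pieces. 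For (2) in particular, your route (BG for $F$ and $G$, the reverse Cauchy--Schwarz step, and the strictness via the one-dimensionality of totally isotropic subspaces of the signature-$(2,1)$ form) is essentially an unwinding by hand of the support-property argument the paper invokes; both are valid, and yours has the advantage of making explicit where strictness comes from (proportionality of $\mathrm{ch}_{\le 2}(F)$ and $\mathrm{ch}_{\le 2}(E)$ would make the numerical wall all of $\mathbb H$). Your treatment of (5) via the two hyperbolas through the apex and the monotonicity of $s\mapsto\sqrt{s+A}-\sqrt{s+B}$ reproduces the computation in the proof of \cite[Lemma 3.6]{BMSZ}. The only points worth making explicit if you wrote this up: the numerical identities at the apex use that the actual wall sits inside the full numerical semicircle (Proposition \ref{numerical}(1)--(2)), while the sign conditions $q^{\beta_c}(F),q^{\beta_c}(G)>0$ use that the destabilizing sequence lies in $\mathrm{Coh}^{\beta_c}(X)$ at the apex, which is part of \cite[Proposition B.5]{BMS16}; and in (2) you wrote ``$\mathrm{ch}_{\le 2}(F)$ and $\mathrm{ch}_{\le 2}(E)$'' where the cross term you computed is $Q(F,G)$ --- the spans coincide since $E=F+G$, so this is harmless.
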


\textit{Proof.}
	The statement (1) is proven in \cite[Lemma 2.4]{MS18}. The statement (2) is essentially proven in \cite[Lemma 2.7]{Sch19}: it suffices to note that in the case of a semicircular wall we have $\mathrm{ch}_{\le 2}(F)\neq 0,\mathrm{ch}_{\le 2}(G)\neq 0$ and $\mathrm{ch}_{\le 2}(F)-\lambda\mathrm{ch}_{\le 2}(G)\neq 0$ for any $\lambda\in\mathbb R$. Hence, if we apply this lemma with the quadratic form $Q(E)=\overline{\Delta}_H(E)$ defined on $\Lambda\otimes\mathbb R$, we get that for some $\lambda>0$ and $(\beta,\alpha)\in W$ $Z^{\mathrm{tilt}}_{\alpha,\beta}(\mathrm{ch}_{\le 2}(F)-\lambda\mathrm{ch}_{\le 2}(G))=0$, so
	$Q(\mathrm{ch}_{\le 2}(F)-\lambda\mathrm{ch}_{\le 2}(G))<0$, hence $Q(F,G):=Q^{\mathrm{tilt}}(\mathrm{ch}_{\le 2}(F),\mathrm{ch}_{\le 2}(G))>0$ (we use the Bogomolov-Gieseker inequality for $F$ and $G$) and $Q(E)=Q(F)+Q(G)+2Q(F,G)>Q(F)+Q(G)$.
	
	The statement (3) is proven in \cite[Proposition 2.7 (iii)]{Sch23} for the case of $\mathbb P^3$, and the proof works for any smooth projective threefold. The statement (4) is proven in \cite[Proposition 2.7 (v)]{Sch23}, the proof works in our case unchanged. 
	
	Finally, the statement (5) appeared in the proof of \cite[Lemma 3.6]{BMSZ} and was formulated in \cite[Proposition 2.7 (vi)]{Sch23}. $\square$

We have the following lemma (\cite[Lemma 3.5]{Sch19}) that refines the last statement of Proposition \ref{actual} (3).

\begin{lemma}\label{minimal} If an object $E\in\mathrm{Coh}^\beta(X)$ is $\nu_{\alpha,\beta}$-semistable for $\alpha\gg 0$, then one of the following holds:\\
(a) $\mathcal H^{-1}(E)=0$ and $\mathcal H^0(E)$ is a $\mu$-semistable pure sheaf of dimension $\ge 2$.\\
(b) $\mathcal H^{-1}(E)=0$ and $\dim \mathcal H^0(E)\le 1$.\\
(c) $\mathcal H^{-1}(E)$ is a torsion-free $\mu$-semistable sheaf and $\dim \mathcal H^0(E)\le 1$. Also, if $\nolinebreak{\beta>\mu(E)}$, then there are no non-zero morphisms from sheaves of dimension $\le 1$ to $E$.\\
Moreover, if $H^2\cdot\mathrm{ch}_1^\beta(E)$ is either zero or has the minimal positive value among objects of $\mathrm{Coh}^\beta(X)$, then $E$ is $\nu_{\alpha,\beta}$-semistable if and only if it satisfies one of the three properties above.
\end{lemma}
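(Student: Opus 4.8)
The plan is to read off the three cases from the large-volume limit $\alpha\to\infty$ of the tilt-slope, applied to the canonical exact sequence
$$0\to\mathcal H^0(E)\to E\to\mathcal H^{-1}(E)[1]\to 0$$
of $E$ in the heart $\mathrm{Coh}^\beta(X)$, where $\mathcal H^0(E)\in\mathcal T_\beta$ and $\mathcal H^{-1}(E)\in\mathcal F_\beta$. Three structural facts drive everything. First, every object of $\mathrm{Coh}^\beta(X)$ has $H^2\cdot\mathrm{ch}_1^\beta\ge 0$, so $\Im Z^{\mathrm{tilt}}_{\alpha,\beta}\ge 0$ on the heart; second, a sheaf in $\mathcal F_\beta$ is automatically torsion-free, since a torsion subsheaf would have slope $+\infty>\beta$; third, from the definition of $\nu_{\alpha,\beta}$ its leading behaviour as $\alpha\to\infty$ is $\nu_{\alpha,\beta}(F)\sim-\tfrac{\alpha^{2}\,H^{3}\cdot\mathrm{ch}_0(F)}{2\,H^{2}\cdot\mathrm{ch}_1^\beta(F)}$ when $H^{2}\cdot\mathrm{ch}_1^\beta(F)>0$, while $H^{2}\cdot\mathrm{ch}_1^\beta(F)=0$ forces $\nu_{\alpha,\beta}(F)=+\infty$. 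Since walls are locally finite, "$\nu_{\alpha,\beta}$-semistable for $\alpha\gg0$" means $E$ lies in the unbounded chamber over $\beta$, so all comparisons may be made in this limit.

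The limit slope sorts the elementary sheaf-theoretic pieces into three phases: sheaves supported in dimension $\le 1$ and shifts $G[1]$ of $\mu$-semistable torsion-free sheaves with $\mu(G)\le\beta$ both have limit slope $+\infty$ (top); $\mu$-semistable sheaves of positive rank have limit slope $-\infty$ (bottom); and pure $2$-dimensional sheaves (rank $0$, so $\mathrm{ch}_0=0$) have a finite limit slope (middle). I would first prove the easy implication: each of the configurations (a), (b), (c) is $\nu_{\alpha,\beta}$-semistable for $\alpha\gg0$. For (a) and (b) this is a comparison against subsheaves controlled by $\mu$-semistability; in (c) the two pieces $\mathcal H^{-1}(E)[1]$ and the dimension $\le1$ sheaf $\mathcal H^0(E)$ both sit at the top phase, so their extension cannot be destabilised by the leading term, and the $\mu$-semistability of $\mathcal H^{-1}(E)$ controls the subleading comparison.

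For the converse I would build the limit Harder-Narasimhan filtration of $E$ explicitly out of the torsion filtration of $\mathcal H^0(E)$ and the $\mu$-Harder-Narasimhan filtration of $\mathcal H^{-1}(E)$, realising each factor as a genuine subobject or quotient of $E$ in $\mathrm{Coh}^\beta(X)$ (the membership $\mathcal H^{-1}(E)/A\in\mathcal F_\beta$ for a step $A$ of the $\mu$-filtration holds because $\mu_{\max}(\mathcal H^{-1}(E)/A)\le\beta$). Semistability in the limit then forces all these factors to share one limit phase, and matching this against the table above leaves exactly the three cases: a single bottom- or middle-phase factor gives (a), an empty $\mathcal H^{-1}(E)$ with a dimension $\le1$ sheaf gives (b), and a family of top-phase factors gives (c), with $\mu$-semistability of the governing sheaf ($\mathcal H^0(E)$ in (a), $\mathcal H^{-1}(E)$ in (c)) extracted from the subleading ($\alpha^{0}$) comparison among equal-phase pieces. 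In particular, whenever $H^{2}\cdot\mathrm{ch}_1^\beta(E)>0$ the object $E$ can have no subobject supported in dimension $\le1$, for such a subobject has slope $+\infty>\nu_{\alpha,\beta}(E)$; taking the image of any map from a dimension $\le1$ sheaf gives the vanishing asserted in (c) when $\beta>\mu(E)$, since then $\Im Z^{\mathrm{tilt}}_{\alpha,\beta}(E)>0$.

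Finally, the "moreover" equivalence is precisely the situation of Proposition \ref{actual}(3): when $H^{2}\cdot\mathrm{ch}_1^\beta(E)$ is zero or attains the minimal positive value in $\mathrm{Coh}^\beta(X)$, no actual wall meets the line $\beta=\beta_0$, so $\nu_{\alpha,\beta}$-semistability is independent of $\alpha>0$; hence "semistable for $\alpha\gg0$" agrees with "semistable for all $\alpha$," and the three-case description becomes an equivalence. The main obstacle I anticipate is the converse direction -- concretely, producing the limit Harder-Narasimhan factors as honest subobjects and quotients inside the tilted heart (the preimage constructions and torsion-pair bookkeeping) and carrying out the subleading slope analysis that converts equal leading phases into genuine $\mu$-semistability, while correctly handling the boundary cases of pure $2$-dimensional sheaves (where $\mathrm{ch}_0=0$) and sheaves of slope exactly $\beta$ (where the $\alpha^{2}$-term vanishes and the finite term decides).
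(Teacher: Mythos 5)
The paper does not reprove this statement: it simply cites \cite[Lemma 7.2.1, Lemma 7.2.2]{BMT} and adds the observation that minimal positive $H^2\cdot\mathrm{ch}_1^\beta$ upgrades semistability to stability. Your large-volume-limit analysis is essentially the standard proof of those cited lemmas, and the forward direction (semistable for $\alpha\gg 0$ implies (a), (b) or (c)) is sound in outline. Two corrections before the real issue. First, the canonical sequence in $\mathrm{Coh}^\beta(X)=\langle\mathcal F_\beta[1],\mathcal T_\beta\rangle$ is $0\to\mathcal H^{-1}(E)[1]\to E\to\mathcal H^0(E)\to 0$: the shifted torsion-free part is the \emph{subobject} and $\mathcal H^0(E)$ the \emph{quotient}, the opposite of what you wrote. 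Second, within the ``top'' and ``bottom'' phases the deciding comparison is still at order $\alpha^2$ (the coefficient is $\tfrac{1}{2(\beta-\mu)}$, which recovers $\mu$-semistability), and only for pieces of equal slope does one descend to the $\alpha^0$ term -- where $\mu$-semistability no longer controls anything.

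The genuine gap is your ``easy implication'': the claim that each of the configurations (a), (b), (c) is $\nu_{\alpha,\beta}$-semistable for $\alpha\gg 0$ is false without the minimality hypothesis on $H^2\cdot\mathrm{ch}_1^\beta(E)$. For example, $\mathcal O_X\oplus\mathcal I_C$ (with $C$ a curve) is a $\mu$-semistable torsion-free sheaf satisfying (a), yet the subobject $\mathcal O_X$ destabilizes it for every $\alpha$, since the two summands have equal rank and $\mathrm{ch}_1$ but different $\mathrm{ch}_2$; the analogous shifted example $(\mathcal O_X(-1)\oplus\mathcal I_C(-1))[1]$ breaks case (c). This matters because you rely on the ``easy implication'' to get the backward direction of the ``moreover'' statement: the wall argument via Proposition \ref{actual}(3) only shows that semistability is independent of $\alpha$ along the line $\beta=\beta_0$, so you still need that (a)/(b)/(c) \emph{together with} the hypothesis that $H^2\cdot\mathrm{ch}_1^\beta(E)$ is zero or minimal positive forces semistability. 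That is where the hypothesis does the work: it restricts any destabilizing subobject $F$ to $H^2\cdot\mathrm{ch}_1^\beta(F)\in\{0,H^2\cdot\mathrm{ch}_1^\beta(E)\}$, and in the first case $F$ must be a sheaf of dimension $\le 1$ (excluded by purity in (a), harmless in (b), (c)), while in the second case the quotient has $\nu_{\alpha,\beta}=+\infty$. Your sketch never invokes the hypothesis at this point, so the equivalence in the ``moreover'' is not actually established as written.
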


\textit{Proof.} This is proven in \cite[Lemma 7.2.1, Lemma 7.2.2]{BMT}. Note that, if we assume that $H^2\cdot\mathrm{ch}_1^\beta(E)$ has the minimal positive value and that $E$ is $\nu_{\alpha,\beta}$-semistable, then it is $\nu_{\alpha,\beta}$-stable. $\square$

\begin{remark}\label{line bundles} Proposition \ref{actual} (2) implies that objects $E\in\mathrm{Coh}^\beta(X)$ with $\overline{\Delta}_H(E)=0$ can only be destabilized at the vertical wall. Hence, by Lemma \ref{minimal} line bundles and their shifts by one are tilt-semistable everywhere (under the condition that they belong to $\mathrm{Coh}^\beta(X)$). \end{remark}

We will also use the following relation between tilt-stability and the more classical notions of stability. Let $f, g \in \mathbb R[m]$ be polynomials. If $\deg(f) < \deg(g)$, then we set
$f > g$. If $\deg(f) = \deg(g)$ and $a$, $b$ are the leading coefficients in $f$,
$g$ respectively, then we put $f < (\leq)\ g$ if $\frac{f(m)}{a} < (\leq)\
\frac{g(m)}{b}$ for all $m \gg 0$.
For an arbitrary sheaf $E\in\mathrm{Coh}(X)$ we define the numbers $a_i(E)$ for $i \in \{0,
1, 2, 3\}$ through the Hilbert polynomial $P(E, m):=\chi(E(m))=a_3(E)m^3+a_2(E) m^2+
a_1(E) m + a_0(E)$. In addition, we set $P_2(E,m):=a_3(E)m^2+a_2(E)m+a_1(E)$.
The sheaf $E \in \mathrm{Coh}(X)$ is called \emph{Gieseker (semi)stable}, if for any
its proper subsheaf $0\ne F \hookrightarrow E$ the inequality $P(F, m) < (\leq)\ P(E/F, m)$ holds. Respectively, $E$ is called \emph{(semi)stable}
if for any subsheaf $0\ne F \hookrightarrow E$ the inequality $P_2(F, m) < (\leq)\ P_2(E/F, m)$ holds.
Gieseker stability, 2-stability and $\mu$-stability of a sheaf satisfy the relations:
\begin{equation*}
	\xymatrix{
		\text{$\mu$-stability} \ar@{=>}[r] & \text{$2$-stability} \ar@{=>}[r] & \text{Gieseker stability} \ar@{=>}[d] \\
		\text{$\mu$-semistability} & \text{$2$-semistability} \ar@{=>}[l] &\text{Gieseker semistability} \ar@{=>}[l]
	}
\end{equation*}

\begin{proposition}\label{2-stability}
	An object $E\in\mathrm D^b(X)$ belongs to $\mathrm{Coh}^\beta(X)$ and is $\nu_{\alpha,\beta}$-(semi)stable for $\beta<\mu(E)<+\infty$ 
	and $\alpha\gg0$ iff $E$ is a 2-(semi)stable sheaf of positive rank.
\end{proposition}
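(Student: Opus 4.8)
The plan is to treat this as a large-volume-limit statement: both $\nu_{\alpha,\beta}$-stability and $2$-stability depend only on the truncated character $\mathrm{ch}_{\le 2}$ --- the former by construction, the latter because $P_2$ discards the constant term $a_0$, which by Riemann--Roch is the only Hilbert coefficient into which $\mathrm{ch}_3$ enters. So the first thing I would record is a dictionary between the two orderings. Using formula (\ref{ch}) and $-K_X=2H$, one computes $a_3=\tfrac{H^3}{6}\mathrm{ch}_0$, while $a_2/a_3$ is an increasing affine function of $\mu$ and, once $\mu$ is fixed, $a_1/a_3$ is an increasing affine function of $H\cdot\mathrm{ch}_2/(H^3\cdot\mathrm{ch}_0)$. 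Thus comparing reduced polynomials $P_2/a_3$ amounts to comparing first the slope $\mu$, and in case of equality the quantity $H\cdot\mathrm{ch}_2/(H^3\cdot\mathrm{ch}_0)$; I will match this against the large-$\alpha$ expansion of $\nu_{\alpha,\beta}$.

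Next the structural reductions. For $(\Leftarrow)$, assume $E$ is $2$-semistable of positive rank; since $2$-semistability implies $\mu$-semistability, and a $\mu$-semistable sheaf of positive rank is torsion-free with $\mu(E)<+\infty$ (a torsion subsheaf would have slope $+\infty$, violating semistability), for $\beta<\mu(E)$ every quotient sheaf has slope $>\beta$, so $E\in\mathcal T_\beta\subset\mathrm{Coh}^\beta(X)$. For $(\Rightarrow)$, assume $E\in\mathrm{Coh}^\beta(X)$ is $\nu_{\alpha,\beta}$-semistable for $\alpha\gg0$ with $\beta<\mu(E)<+\infty$. Membership in $\mathrm{Coh}^\beta(X)$ gives $H^2\cdot\mathrm{ch}_1^\beta(E)=H^3\cdot\mathrm{ch}_0(E)(\mu(E)-\beta)\ge0$, which together with $\beta<\mu(E)<+\infty$ forces $\mathrm{ch}_0(E)>0$. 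Now Lemma \ref{minimal} leaves cases (a)--(c), and positivity of the rank excludes (b) ($\mathrm{ch}_0=0$) and (c) ($\mathrm{ch}_0<0$); hence $E$ is a $\mu$-semistable pure sheaf of positive rank, i.e. torsion-free.

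The computational core is the asymptotic comparison. For a subsheaf $F\subseteq E$ with $\mu(F)=\mu(E)=:\mu_0>\beta$, writing $H^2\cdot\mathrm{ch}_1^\beta=(\mu_0-\beta)H^3\cdot\mathrm{ch}_0$ for both, one gets the $\alpha$-independent identity
$$\nu_{\alpha,\beta}(F)-\nu_{\alpha,\beta}(E)=\frac{1}{\mu_0-\beta}\left(\frac{H\cdot\mathrm{ch}_2(F)}{H^3\cdot\mathrm{ch}_0(F)}-\frac{H\cdot\mathrm{ch}_2(E)}{H^3\cdot\mathrm{ch}_0(E)}\right),$$
since the $\beta$-twist corrections cancel once $\mu$ is fixed; when instead $\mu(F)<\mu(E)$, the leading $\alpha^2$-term already gives $\nu_{\alpha,\beta}(F)<\nu_{\alpha,\beta}(E)$ for $\alpha\gg0$. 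This is exactly the dictionary above. It finishes $(\Rightarrow)$ at once: a subsheaf violating $2$-semistability must (by $\mu$-semistability) have $\mu(F)=\mu_0$, so it is a subsheaf of $E$ attaining the maximal slope and is therefore itself $\mu$-semistable, hence lies in $\mathcal T_\beta$ and is a genuine subobject of $E$ in $\mathrm{Coh}^\beta(X)$; its failure occurs at the $H\cdot\mathrm{ch}_2/(H^3\cdot\mathrm{ch}_0)$ level, and by the mediant inequality (comparing $F$ with $E/F$ yields the comparison of $F$ with $E$) the displayed difference is positive, so $F$ destabilizes $E$ in tilt-stability --- a contradiction.

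The hard part is the remaining implication in $(\Leftarrow)$, because a tilt-destabilizing subobject $F\hookrightarrow E$ in $\mathrm{Coh}^\beta(X)$ need not be a subsheaf. Here I would run the cohomology long exact sequence of $0\to F\to E\to G\to0$: since $E$ is a sheaf, $\mathcal H^{-1}(F)=0$, so $F$ is a sheaf mapping to $E$ with kernel $K:=\mathcal H^{-1}(G)\in\mathcal F_\beta$ and image a genuine subsheaf $F'=F/K\subseteq E$. One then transfers the $2$-semistability inequality for $F'\subseteq E$ back to $F$, controlling the correction $\mathrm{ch}_{\le2}(K)$ via the defining property $H^2\cdot\mathrm{ch}_1^\beta(K)\le0$ of $\mathcal F_\beta$ together with the Bogomolov--Gieseker inequality of Proposition \ref{BG}; the expected outcome is $\nu_{\alpha,\beta}(F)\le\nu_{\alpha,\beta}(E)$ for $\alpha\gg0$, contradicting destabilization. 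Finally, to make ``$\alpha\gg0$'' uniform I would invoke the wall structure: by Proposition \ref{numerical} the walls for the fixed class $\mathrm{ch}_{\le2}(E)$ are nested semicircles (plus the vertical wall at $\beta=\mu(E)$, which we avoid since $\beta<\mu(E)$), so there is a largest one and tilt-stability is constant above it. I expect this bookkeeping of complex subobjects and the sign of the correction term --- rather than the asymptotics themselves --- to be the main obstacle; it is handled exactly as in the large-volume-limit arguments of \cite{BMT} and \cite{Sch19}.
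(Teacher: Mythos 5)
Your proposal is correct and is, in substance, the same argument the paper relies on: the paper's entire proof of Proposition \ref{2-stability} is a one-line citation of \cite[Prop.\ 14.2]{Bri} (the K3 case) together with the remark that the proof carries over, and what you have written is essentially a correct unpacking of that large-volume-limit argument in the tilt-stability setting. Two remarks. First, the step you single out as ``the hard part'' is easier than you fear: if the kernel $K=\mathcal H^{-1}(G)\in\mathcal F_\beta$ of the sheaf map $F\to E$ is nonzero, then $\mu(K)\le\beta<\mu(E)$, and since $\mu(F)$ is a weighted average of $\mu(K)$ and $\mu(F')\le\mu(E)$ one gets $\mu(F)<\mu(E)$ outright, so the leading $\alpha^2$-term already yields $\nu_{\alpha,\beta}(F)<\nu_{\alpha,\beta}(E)$ for $\alpha\gg 0$; no appeal to Proposition \ref{BG} is needed to control the correction term, and the genuinely two-term comparison only occurs when $K=0$ and $F$ is an honest subsheaf of the same slope, which is exactly the $2$-stability comparison. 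Second, the one point that remains nontrivial in both your write-up and the paper's citation is the uniformity of ``$\alpha\gg 0$'' over all potential destabilizers; your appeal to the nested semicircular walls and the existence of a largest actual wall (Proposition \ref{numerical}, plus the local finiteness from \cite{BMS16}) is the standard and correct way to close this, since above the largest wall the Harder--Narasimhan filtration of $E$ is constant and the asymptotic comparison can be run against a single fixed destabilizer.
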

\textit{Proof.}
	This is proven in \cite[Prop. 14.2]{Bri} for K3 surfaces, but the proof works in our case too. $\square$

For a tilt-semistable object $E$ with $\mathrm{ch}_0(E)\neq 0$, following \cite{BMSZ}, we define
$$\beta_-(E)=\mu(E)-\sqrt{\frac{\overline{\Delta}_H(E)}{(H^3\cdot\mathrm{ch}_0(E))^2}},$$
$$\beta_+(E)=\mu(E)+\sqrt{\frac{\overline{\Delta}_H(E)}{(H^3\cdot\mathrm{ch}_0(E))^2}}.$$

The numbers $\beta_-(E)$ and $\beta_+(E)$ are the two solutions of the equation $\nu_{0,\beta}(E)=0$. We will use the following theorem to bound the rank of destabilizing subobjects.

\begin{theorem}\label{rank bound}Let $E$ be a tilt-stable object on $X$ with $\mathrm{ch}_0(E)\neq 0$, which is not a shift of a line bundle or a twisted ideal sheaf of points. If there is an integer $n\in\mathbb Z$ such that $\beta_-(E),\beta_+(E)\in[n,n+1)$, then
	$$\overline{\Delta}_H(E)\ge\mathrm{ch}_0(E)^2.$$
\end{theorem}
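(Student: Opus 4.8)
The plan is to argue by contradiction after normalizing the setup. First I would exploit the two symmetries of the statement: tensoring by $\mathcal O_X(-n)$ shifts $\mu(E)$ and both $\beta_\pm(E)$ by $-n$ while preserving tilt-stability, the rank, the discriminant and the excluded classes, so I may assume $n=0$, i.e. $\beta_-(E),\beta_+(E)\in[0,1)$; and the shift $[1]$ negates $\mathrm{ch}_0$ while fixing $\mu$, $\overline{\Delta}_H$ and $\beta_\pm$, so I may assume $r:=\mathrm{ch}_0(E)>0$. Since $H^3=5$ we have $\beta_\pm(E)=\mu(E)\pm\tfrac{1}{5r}\sqrt{\overline{\Delta}_H(E)}$, so the desired inequality $\overline{\Delta}_H(E)\ge r^2$ is exactly the assertion that the width $\beta_+(E)-\beta_-(E)$ is at least $2/5$. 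Because $E$ is not among the excluded objects, the classification of tilt-stable objects of vanishing discriminant forces $\overline{\Delta}_H(E)>0$; then $\beta_-(E)<\mu(E)<\beta_+(E)$, and $\mu(E)=c/r\in(0,1)$ with $c,r\in\mathbb Z$ forces $r\ge 2$ (the case $r=1$ cannot occur, as it would require $\mu(E)\in\mathbb Z\cap(0,1)$, which is empty).

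Now suppose for contradiction that $\overline{\Delta}_H(E)<r^2$. The case where $E$ has no semicircular actual wall I expect to be routine: by Lemma \ref{minimal} and Proposition \ref{2-stability} such an $E$ is, up to shift, a $\mu$-stable sheaf, and an elementary discriminant estimate together with Remark \ref{line bundles} pins it to the excluded list. So assume $E$ has a largest semicircular wall $W$ (Proposition \ref{numerical}(4)), realized by a short exact sequence $0\to F\to E\to G\to 0$ in $\mathrm{Coh}^\beta(X)$ with $W=W(E,F)$. By Proposition \ref{actual}(4) one of $F,G$ — call it $A$ — has $\mathrm{ch}_0(A)>0$ and $\mu(A)\le\mu(E)$, and then Proposition \ref{actual}(5) gives
$$0\le\beta_-(E)<\beta_-(A)\le\mu(A)<\mu(E)<1.$$
Hence $\mu(A)\in(0,1)$ is a non-integer, so $A$ has rank $\ge 2$ and is not a shift of a line bundle or a twisted ideal sheaf of points; moreover $\overline{\Delta}_H(A)<\overline{\Delta}_H(E)$ by Proposition \ref{actual}(2), while $\overline{\Delta}_H(A)\ge 0$ by Proposition \ref{BG}.

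The crux is converting this into a numerical contradiction, and this is the step I expect to be the main obstacle. The quantitative input is the exact refinement of Proposition \ref{actual}(2): writing $r=r_F+r_G$ and unwinding the inequality $Q(F,G)>0$ used there, one obtains the identity
$$\overline{\Delta}_H(E)=\tfrac{r}{r_F}\overline{\Delta}_H(F)+\tfrac{r}{r_G}\overline{\Delta}_H(G)-25\,r_Fr_G\,(\mu(F)-\mu(G))^2,$$
valid when both $F$ and $G$ have positive rank. The cross-term $-25\,r_Fr_G(\mu(F)-\mu(G))^2$ is precisely what prevents a naive induction on the discriminant from closing, since $\overline{\Delta}_H(F)+\overline{\Delta}_H(G)<\overline{\Delta}_H(E)$ alone leaves room of size $2r_Fr_G$. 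To defeat it I would combine (i) a dual form of Proposition \ref{actual}(5) bounding $\beta_+$ of the complementary factor from above, so that both $F$ and $G$ again satisfy the unit-interval hypothesis and one may apply the inductive bounds $\overline{\Delta}_H(F)\ge r_F^2$ and $\overline{\Delta}_H(G)\ge r_G^2$; (ii) the integrality estimate $|\mu(F)-\mu(G)|\ge 1/(r_Fr_G)$, which bounds the slope gap below and forces $Q(F,G)$ to be large; and (iii) the cases where $F$ or $G$ is torsion or of negative rank, treated through the structural description of $\mathcal H^{-1}(E)$ and $\mathcal H^0(E)$ in Lemma \ref{minimal}. Assembling (i)--(iii) so that the constraints $\mu(F),\mu(G)\in(0,1)$ and the discriminant budget $\overline{\Delta}_H(E)<r^2$ become incompatible is the heart of the matter; I expect the bookkeeping across these cases, rather than any single inequality, to be the genuine difficulty.
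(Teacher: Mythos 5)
The paper does not prove this theorem internally: its entire proof is a citation of \cite[Theorem 3.1]{BMSZ}, applied to $E(-nH)$ with $H^3=5$, $i_X=2$. Your proposal therefore attempts to reconstruct a substantial external result, and as written the reconstruction does not close. The difficulty you defer as ``bookkeeping'' is in fact where the induction fails. Your identity
$$\overline{\Delta}_H(E)=\tfrac{r}{r_F}\overline{\Delta}_H(F)+\tfrac{r}{r_G}\overline{\Delta}_H(G)-25\,r_Fr_G\,(\mu(F)-\mu(G))^2$$
is correct, but inserting the inductive bounds $\overline{\Delta}_H(F)\ge r_F^2$ and $\overline{\Delta}_H(G)\ge r_G^2$ gives only $\overline{\Delta}_H(E)\ge r^2-25\,r_Fr_G(\mu(F)-\mu(G))^2$, which is \emph{strictly weaker} than the goal whenever $\mu(F)\neq\mu(G)$ --- and at a semicircular wall with both ranks positive one always has $\mu(F)\neq\mu(G)$, since equal slopes produce the vertical wall (Proposition \ref{numerical}(1)). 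Your ingredient (ii) points the wrong way: the integrality estimate $|\mu(F)-\mu(G)|\ge 1/(r_Fr_G)$ is a \emph{lower} bound on the loss term; equivalently, since $50\,Q^{\mathrm{tilt}}(\mathrm{ch}_{\le2}F,\mathrm{ch}_{\le2}G)=\tfrac{r_G}{r_F}\overline{\Delta}_H(F)+\tfrac{r_F}{r_G}\overline{\Delta}_H(G)-25r_Fr_G(\mu(F)-\mu(G))^2$, a large slope gap forces the cross term to be \emph{small}, not large. Closing the argument requires different input (in \cite{BMSZ} one pins down where the wall sits relative to the integers so that the factors satisfy a strengthened hypothesis), and no such mechanism appears in your outline.

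There are further unaddressed cases that carry real content. The inductive hypothesis is simply false for factors on the excluded list --- a shift of a line bundle or a twisted ideal sheaf of points has $\overline{\Delta}_H\in\{0,\,10\,\mathrm{length}\}$, in particular $\overline{\Delta}_H=0<\mathrm{ch}_0^2$ is possible --- and you only rule these out for the one factor $A$ with $0<\mu(A)<\mu(E)$; nothing prevents the complementary factor from being, say, $\mathcal O_X(1)[1]$ or an ideal sheaf, nor from having rank $\le 0$, where your identity does not even apply. Two smaller points: the reduction to $\mathrm{ch}_0(E)>0$ via the shift $[1]$ is not legitimate, because $E[1]$ leaves the heart $\mathrm{Coh}^\beta(X)$ where tilt-stability is defined; the correct device is the derived dual of Proposition \ref{duality}, which maps $[n,n+1)$ to an interval of the form $(-n-1,-n]$ and so requires a separate endpoint check. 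And the claim that vanishing discriminant forces $E$ onto the excluded list is itself a nontrivial classification that you invoke without source. In outline your strategy is the right one --- it is essentially how \cite{BMSZ} proceed --- but the proposal stops exactly where the theorem's content begins.
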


\textit{Proof.} This is a special case of \cite[Theorem 3.1]{BMSZ} (applied to $E(-nH)$) with $H^3=5,i_X=2$. $\square$

In order to apply Theorem \ref{rank bound} we will use the following lemma.

\begin{lemma}\label{stable}Let $E$ be a tilt-semistable object with $\mathrm{ch}_0(E)>0$. If there is a semicircular wall $W$ for $E$, then $W$ is induced by a tilt-stable subobject or quotient $F$ with $\mathrm{ch}_0(F)>0,\mu(F)<\mu(E).$
\end{lemma}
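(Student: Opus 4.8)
The plan is to work at a single point of the wall, extract a destabilizing sequence together with its Jordan--Hölder factors, and then feed these into the numerical properties of Proposition \ref{actual} to locate a tilt-stable destabilizer of positive rank and strictly smaller slope.

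First I would fix a generic point $(\beta_0,\alpha_0)\in W$. Since $W$ is an actual wall of the tilt-stable object $E$, the object $E$ is strictly $\nu_{\alpha_0,\beta_0}$-semistable there, so there is a short exact sequence $0\to F\to E\to G\to 0$ in $\mathrm{Coh}^{\beta_0}(X)$ realizing $W$, with $\nu_{\alpha_0,\beta_0}(F)=\nu_{\alpha_0,\beta_0}(E)=\nu_{\alpha_0,\beta_0}(G)=:\nu_0$. Passing to a Jordan--Hölder filtration at $(\beta_0,\alpha_0)$, each stable factor $A_i$ again has $\nu_{\alpha_0,\beta_0}(A_i)=\nu_0$; since this equality persists on a dense subset of $W$, the numerical wall $W(E,A_i)$ coincides with $W$, so every $A_i$ induces $W$. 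In particular the first factor $A_1\hookrightarrow E$ is a tilt-stable subobject and the last factor $E\twoheadrightarrow A_m$ is a tilt-stable quotient, both inducing $W$. Thus a tilt-stable sub/quotient inducing $W$ always exists, and the real content of the lemma is to produce one with $\mathrm{ch}_0>0$ and $\mu<\mu(E)$.

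Next I would settle the rank and slope. Applying Proposition \ref{actual}(4) to the destabilizing sequence, and using that $\mathrm{ch}_0(E)>0$, one of $F,G$—call it $A$, a subobject or a quotient of $E$—has $\mathrm{ch}_0(A)>0$ and $\mu(A)\le\mu(E)$. If $A$ is a subobject, Proposition \ref{actual}(5) immediately upgrades this to $\mu(A)<\mu(E)$. If $A$ is a quotient, I would rule out $\mu(A)=\mu(E)$ directly: equality together with the additivity of $\mathrm{ch}_0$ and $\mathrm{ch}_1$ forces, for the complementary subobject $F$, the identity $H^2\cdot\mathrm{ch}_1^{\beta_0}(F)=(\mu(E)-\beta_0)H^3\mathrm{ch}_0(F)$; on a semicircular wall the factor $\mu(E)-\beta_0$ has constant sign (determined by the branch of the hyperbola carrying $W$, Proposition \ref{numerical}(3)), so if $\mathrm{ch}_0(F)>0$ then $\mu(F)=\mu(E)$, contradicting Proposition \ref{actual}(5), while if $\mathrm{ch}_0(F)\le 0$ then the bound $0\le\mathrm{ch}_1^{\beta_0}(F)\le\mathrm{ch}_1^{\beta_0}(E)$ of Proposition \ref{actual}(3) is violated unless the wall is vertical. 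Hence in both cases $\mathrm{ch}_0(A)>0$ and $\mu(A)<\mu(E)$, and since the lemma permits either a subobject or a quotient, this $A$ is an admissible candidate.

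It remains to make the destabilizer tilt-stable, and this is the step I expect to be the main obstacle. I would argue by descent on $\overline{\Delta}_H$: if $A$ is not tilt-stable, it is strictly $\nu_{\alpha_0,\beta_0}$-semistable, and when $A$ is a subobject of $E$ I would pass to its maximal destabilizing subobject $A'\hookrightarrow A\hookrightarrow E$ (dually, to a minimal destabilizing quotient when $A$ is a quotient). Then $A'$ is again a genuine subobject, resp. quotient, of $E$, still induces $W$, and by Proposition \ref{actual}(2) satisfies $\overline{\Delta}_H(A')<\overline{\Delta}_H(A)$; re-applying Proposition \ref{actual}(4)--(5) (and the quotient variant above) on the favorable side keeps $\mathrm{ch}_0(A')>0$ and $\mu(A')<\mu(E)$. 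Because $\overline{\Delta}_H$ strictly decreases while remaining non-negative (Proposition \ref{BG}), the descent terminates at a tilt-stable object of the required type. The delicate point—and the crux of the whole argument—is guaranteeing that the descent can always be carried out \emph{on the correct side}, so that the limiting tilt-stable object is genuinely a subobject or quotient of $E$ rather than merely a subquotient, and that the strict inequality $\mu<\mu(E)$ survives each step; this is exactly where the disjunction in Proposition \ref{actual}(4) must be controlled, using Proposition \ref{actual}(5) and the quotient estimate above to show that the favorable slope condition always points consistently to one fixed side.
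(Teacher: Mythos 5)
Your overall strategy coincides with the paper's: first use Proposition \ref{actual} to produce a destabilizing subobject or quotient $F'$ with $\mathrm{ch}_0(F')>0$ and $\mu(F')<\mu(E)$, then replace $F'$ by a Jordan--H\"older factor (the first factor if $F'$ is a subobject, the last if it is a quotient) to gain tilt-stability. The problem is the step you yourself flag as ``the crux'': after passing to the stable factor $F\hookrightarrow F'\hookrightarrow E$, you must re-establish $\mathrm{ch}_0(F)>0$ and $\mu(F)<\mu(E)$, and your proposed mechanism for doing so does not work. Re-applying Proposition \ref{actual}(4) to the new sequence $0\to F\to E\to E/F\to 0$ only yields the disjunction ``either $F$ or $E/F$ has positive rank and slope $\le\mu(E)$,'' with no control over which branch occurs; and $E/F$ is in general not tilt-stable, so landing on the wrong branch leaves you with nothing to descend to. Your ``descent on $\overline{\Delta}_H$'' does not repair this: termination is not the issue (a single pass through the Jordan--H\"older filtration already produces a stable object), the issue is that the numerical conclusion is not known to transfer from $F'$ to $F$. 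As written, the argument has a genuine gap exactly at the point where the lemma's content lies.

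The paper closes this gap by a different, and quite short, observation: since $E$ is tilt-stable on one side of the semicircular wall $W$, every subobject (or quotient) with the same tilt-slope along $W$ must destabilize $E$ on the \emph{other} side. Which side an object $F$ destabilizes on is governed by the sign of
$$\frac{\mathrm{ch}_0(F)}{\mathrm{ch}_1^\beta(F)}-\frac{\mathrm{ch}_0(E)}{\mathrm{ch}_1^\beta(E)},$$
so this sign is the same for $F'$ and for its Jordan--H\"older factor $F$. For $F'$ the sign is determined by $\mathrm{ch}_0(F')>0$, $\mu(F')<\mu(E)$ and $\beta<\mu(E)$ (note $\frac{\mathrm{ch}_0}{\mathrm{ch}_1^\beta}$ equals, up to a positive normalization, $\frac{1}{\mu-\beta}$ for positive-rank objects with $\mathrm{ch}_1^\beta>0$), and forcing the same sign on $F$, together with $0<\mathrm{ch}_1^\beta(F)$ from Proposition \ref{actual}(3), yields $\mathrm{ch}_0(F)>0$ and $\mu(F)<\mu(E)$ directly. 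If you insert this one-sided destabilization argument in place of your appeal to the disjunction in Proposition \ref{actual}(4), your proof becomes complete; the rest of your write-up (including the more detailed treatment of the quotient case of the slope inequality, which the paper leaves implicit) is sound.
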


\textit{Proof.} Choose a point $(\beta_0,\alpha_0)\in W$. The wall $W$ is given by a short exact sequence
\begin{equation}\label{F' G'}0\to F'\to E\to G'\to 0\end{equation}
in $\mathrm{Coh}^{\beta_0}(X)$. Suppose that (\ref{F' G'}) destabilizes $E$ below $W$. Choose $\alpha_1<\alpha_0$ so that the point $(\beta_0,\alpha_1)$ lies in the chamber adjacent to $W$. Since $E$ is not $\nu_{\alpha_1,\beta_0}$-semistable, we can take a tilt-stable subobject $F\hookrightarrow E$ such that $\nu_{\alpha_1,\beta_0}(F)>\nu_{\alpha_1,\beta_0}(E)$. We should have $\nu_{\alpha_0,\beta_0}(F)=\nu_{\alpha_0,\beta_0}(E)$ and $\nu_{\alpha,\beta_0}(F)>\nu_{\alpha,\beta_0}(E)$ for $\alpha<\alpha_0$, hence $\frac{\mathrm{ch}_0(F)}{\mathrm{ch}_1^{\beta_0}(F)}>\frac{\mathrm{ch}_0(E)}{\mathrm{ch}_1^{\beta_0}(E)}>0$. Therefore $\frac{\mathrm{ch}_1^{\beta_0}(F)}{\mathrm{ch}_0(F)}<\frac{\mathrm{ch}_1^{\beta_0}(E)}{\mathrm{ch}_0(E)}$ and we obtain $\mu(F)<\mu(E),\mathrm{ch}_0(F)>0$. Finally, if (\ref{F' G'}) destabilizes $E$ above $W$, then we analogously choose a point $(\beta_0,\alpha_1)$ in the chamber located immediately above $W$ and take a tilt-stable quotient $E\twoheadrightarrow F$ such that $\nu_{\alpha_1,\beta_0}(E)>\nu_{\alpha_1,\beta_0}(F)$. $\square$

In this article we will mostly work with tilt-stability. The following lemma will allow us to use the properties of Bridgeland stability.

\begin{lemma}\label{nu-lambda}
Let $E\in\mathrm{Coh}^{\beta_0}(X)$ be a $\nu_{\alpha_0,\beta_0}$-semistable object such that $\nu_{\alpha_0,\beta_0}(E)=0$ and fix some $s>0$. Then $E[1]$ is $\lambda_{{\alpha_0},{\beta_0},s}
$-semistable. If moreover $E$ is $\nu_{\alpha_0,\beta_0}$-stable, then there is a neighbourhood $U$ of $(\beta_0,\alpha_0)$ such that such that for all $(\beta,\alpha)\in U$ with $\nu_{\alpha,\beta}(E)
>0$ the object $E$ is $\lambda_{\alpha,\beta,s}$-semistable.
\end{lemma}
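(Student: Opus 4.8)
The plan is to exploit the elementary identity $\Im Z_{\alpha,\beta,s}=-\alpha\,\Re Z^{\mathrm{tilt}}_{\alpha,\beta}$, which drops out of comparing the two central charges and holds for every $s$. Consequently the locus $\nu_{\alpha,\beta}(E)=0$ in the upper half-plane is exactly the locus $\Im Z_{\alpha,\beta,s}(E)=0$, so an object of tilt-slope $0$ becomes an object of maximal $\lambda_{\alpha,\beta,s}$-slope. First I would record that $E\in\mathcal F'_{\alpha_0,\beta_0}$: since $E$ is $\nu_{\alpha_0,\beta_0}$-semistable, the seesaw property gives $\nu_{\alpha_0,\beta_0}(F)\le\nu_{\alpha_0,\beta_0}(E)=0$ for every subobject $F\hookrightarrow E$ in $\Coh^{\beta_0}(X)$, which is precisely the defining condition of $\mathcal F'_{\alpha_0,\beta_0}$. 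Hence $E[1]\in\langle\mathcal F'_{\alpha_0,\beta_0}[1],\mathcal T'_{\alpha_0,\beta_0}\rangle=\mathcal A^{\alpha_0,\beta_0}(X)$.

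For the first assertion, observe that $\nu_{\alpha_0,\beta_0}(E)=0$ forces $H^2\cdot\mathrm{ch}_1^{\beta_0}(E)>0$ (otherwise $\nu_{\alpha_0,\beta_0}(E)=+\infty$) and hence $\Re Z^{\mathrm{tilt}}_{\alpha_0,\beta_0}(E)=0$. By the identity above $\Im Z_{\alpha_0,\beta_0,s}(E)=0$, so $\Im Z_{\alpha_0,\beta_0,s}(E[1])=0$ and $\lambda_{\alpha_0,\beta_0,s}(E[1])=+\infty$. Since $+\infty$ is the largest possible value of $\lambda_{\alpha_0,\beta_0,s}$, no subobject of $E[1]$ can have strictly larger slope, and $E[1]$ is $\lambda_{\alpha_0,\beta_0,s}$-semistable. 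This half uses only the formal definition of $\lambda$-semistability, so it is valid for every $s>0$ with no appeal to genuineness.

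For the second assertion I would first invoke openness of tilt-stability: as $E$ is $\nu_{\alpha_0,\beta_0}$-stable it stays $\nu_{\alpha,\beta}$-stable on a neighbourhood $U$ of $(\beta_0,\alpha_0)$, and for $(\beta,\alpha)\in U$ with $\nu_{\alpha,\beta}(E)>0$ every quotient of $E$ in $\Coh^\beta(X)$ has positive tilt-slope, so $E\in\mathcal T'_{\alpha,\beta}\subset\mathcal A^{\alpha,\beta}(X)$. Working with $s>\frac16$, so that $(\mathcal A^{\alpha,\beta}(X),Z_{\alpha,\beta,s})$ is a genuine Bridgeland stability condition depending continuously on $(\beta,\alpha)$, the object $E$ is semistable with respect to $(\mathcal A^{\alpha_0,\beta_0}(X),Z_{\alpha_0,\beta_0,s})$, being the shift of the semistable object $E[1]$ produced above; and semistability of a fixed object is an open condition on the space of stability conditions. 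Shrinking $U$, the object $E$ is therefore $Z_{\alpha,\beta,s}$-semistable for all $(\beta,\alpha)\in U$. Finally, because $\lambda_{\alpha,\beta,s}=-\cot(\pi\phi)$ is a monotone reparametrisation of the phase $\phi\in(0,1]$ of objects of $\mathcal A^{\alpha,\beta}(X)$, Bridgeland semistability of $E$ coincides with $\lambda_{\alpha,\beta,s}$-semistability whenever $E\in\mathcal A^{\alpha,\beta}(X)$; this yields the claim on the part of $U$ where $\nu_{\alpha,\beta}(E)>0$.

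The main obstacle is this second assertion: one must know that the $Z_{\alpha,\beta,s}$ form a continuous family of bona fide stability conditions with open semistability loci, which is exactly where $s>\frac16$ and Li's generalized Bogomolov--Gieseker inequality (Proposition \ref{BMT}) enter. For the excluded range $0<s\le\frac16$, where genuineness is not available, one would instead argue by hand: take a hypothetical $\lambda_{\alpha,\beta,s}$-destabilising subobject $A\hookrightarrow E$ in $\mathcal A^{\alpha,\beta}(X)$, note that $A\in\mathcal T'_{\alpha,\beta}$ sits in the four-term $\Coh^\beta(X)$-cohomology sequence $0\to\mathcal H^{-1}(B)\to A\to E\to\mathcal H^0(B)\to 0$ with $B=E/A$, bound the finitely many possible classes $\mathrm{ch}_{\le 2}(A)$ near the wall using the support property and local finiteness of walls, and let $(\beta,\alpha)\to(\beta_0,\alpha_0)$ to force a sub- or quotient relation contradicting $\nu_{\alpha_0,\beta_0}$-stability of $E$. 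A consistency check worth recording along the way is the sign $\Re Z_{\alpha_0,\beta_0,s}(E)>0$, which identifies $E$ as an object of phase tending to $0$ as one crosses to the side $\nu_{\alpha,\beta}(E)>0$.
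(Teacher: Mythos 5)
Your proof of the first assertion is correct and for all $s>0$: the identity $\Im Z_{\alpha,\beta,s}=-\alpha\,\Re Z^{\mathrm{tilt}}_{\alpha,\beta}$ is right, $E\in\mathcal F'_{\alpha_0,\beta_0}$ follows from semistability and the seesaw property, and $\lambda_{\alpha_0,\beta_0,s}(E[1])=+\infty$ makes semistability of $E[1]$ vacuous. This is exactly how the paper disposes of the strictly semistable case; for everything else the paper simply cites \cite[Lemma 6.2]{Sch19}, so you are in effect re-proving Schmidt's lemma.

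The second assertion is where your argument has a genuine gap. You write that ``semistability of a fixed object is an open condition on the space of stability conditions.'' It is not: Bridgeland's deformation result makes the extremal phases $\phi^{\pm}_\sigma(E)$ continuous in $\sigma$, so \emph{stability} is open, but a strictly semistable object (a nontrivial extension of stable objects of equal phase but non-proportional central charges) generically becomes unstable under deformation. Your first step only produces \emph{semistability} of $E[1]$ at $(\beta_0,\alpha_0)$, and $\nu_{\alpha_0,\beta_0}$-stability of $E$ does not upgrade this to $\lambda_{\alpha_0,\beta_0,s}$-stability of $E[1]$: for instance, if $E$ is a tilt-stable sheaf sitting in $0\to E\to E'\to T\to 0$ with $T\neq 0$ zero-dimensional and $E'$ tilt-semistable (a non-reflexive sheaf with zero-dimensional singular locus), then $T\in\mathcal T'_{\alpha_0,\beta_0}$ and $E'[1]\in\mathcal F'_{\alpha_0,\beta_0}[1]$ give a short exact sequence $0\to T\to E[1]\to E'[1]\to 0$ in $\mathcal A^{\alpha_0,\beta_0}(X)$ with $\lambda_{\alpha_0,\beta_0,s}(T)=+\infty$, so $E[1]$ is strictly semistable with Jordan--H\"older factors whose central charges are not proportional. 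At that point the openness argument collapses, even for $s>\frac16$. The lemma is still true, but proving it requires exactly the ``by hand'' analysis you relegate to the range $0<s\le\frac16$ and only sketch: one must take a hypothetical destabilizing subobject $A\hookrightarrow E$ at nearby $(\beta,\alpha)$, use the $\Coh^\beta(X)$-cohomology sequence and the support property to control $\mathrm{ch}_{\le 2}(A)$, and derive a contradiction with $\nu_{\alpha_0,\beta_0}$-stability of $E$ in the limit. The steps ``bound the finitely many possible classes'' and ``force a sub- or quotient relation'' are precisely the content of \cite[Lemma 6.2]{Sch19} and are not carried out here, so as written the second half of the lemma is not proved.
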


\textit{Proof.} All the statements are proved in \cite[Lemma 6.2]{Sch19}, except for the case of a strictly $\nu_{\alpha_0,\beta_0}$-semistable object $E$. However, such an object also belongs to the category $\mathcal A^{{\alpha_0}, {\beta_0}}(X)$ and has $\lambda_{{\alpha_0},{\beta_0},s}(E)=+\infty$, hence it is $\lambda_{{\alpha_0},{\beta_0},s}$-semistable. $\square$

Recall that $X=X_5$ is a codimension 3 linear section of $\mathrm{Gr}(2,V)$, where $V\cong\mathbb C^5$. Denote by $\mathcal U$ and $\mathcal Q$ the restrictions to $X$ of tautological subbundle and tautological quotient bundle on the Grassmannian, so that we have a short exact sequence
$$0\to\mathcal U\to V\otimes\mathcal O_{X}\to\mathcal Q\to 0.$$

In $\mathrm{D}^b(X)$ we have a full strong exceptional collection $(\mathcal O_X(-1),\mathcal Q(-1),\mathcal U,\mathcal O_X)$, it is the collection $\varepsilon_2$ from \cite{Orlov} up to a twist with a line bundle. More precisely, Orlov proved that it is a strong exceptional collection. Kuznetsov notes in \cite[(12)]{K} that $(\mathcal U,\mathcal Q^\vee,\mathcal O_X,\mathcal O_X(1))$ is a full exceptional collection in $\mathrm{D}^b(X)$, and the collection that we use can be obtained from it by mutations and twisting with line bundles (\cite[Lemma 5.3]{Faenzi}), hence it is also full. The Chern classes of $\mathcal U$ and $\mathcal Q$ are calculated in \cite{LP}. Using this we can obtain the following lemma.

\begin{lemma} For all $n\in\mathbb Z$ we have
$$\mathrm{ch}^\beta(\mathcal O_{X}(n))=1+(n-\beta)H+(\frac{n^2}{2}-n\beta+\frac{\beta^2}{2})H^2+(\frac{n^3}{6}-\frac{n^2}{2}\beta+n\frac{\beta^2}{2}-\frac{\beta^3}{6})H^3.$$

The twisted Chern characters of $\mathcal U$ and $\mathcal Q^\vee$ are given by
$$\mathrm{ch}^\beta(\mathcal Q(-1))=3+(-2-3\beta)H+(\frac 25+2\beta+\frac 32\beta^2)+(\frac{1}{15}-\frac 25\beta-\beta^2-\frac{\beta^3}{2})H^3,$$
$$\mathrm{ch}^\beta(\mathcal U)=2+(-1-2\beta)H+(\frac{1}{10}+\beta+\beta^2)H^2+(\frac{1}{30}-\frac{\beta}{10}-\frac{\beta^2}{2}-\frac{\beta^3}{3})H^3.$$

The values of $Z_{\alpha,\beta,\frac{1}{6}}$ can be computed as
$$Z_{\alpha,\beta,\frac 16}(\mathcal O_X(-1))=\frac 16+\frac{\beta}{2}+\frac{\beta^2}{2}+\frac{\beta^3}{6}-\frac{\alpha^2}{6}(1+\beta)+i\alpha(\frac 12+\beta+\frac{\beta^2}{2}-\frac{\alpha^2}{2}),$$
$$Z_{\alpha,\beta,\frac 16}(\mathcal Q(-1))=-\frac{1}{15}+\frac 25\beta+\beta^2+\frac{\beta^3}{2}-\frac{\alpha^2}{6}(2+3\beta)+i\alpha(\frac 25+2\beta+\frac 32\beta^2-\frac 32\alpha^2),$$
$$Z_{\alpha,\beta,\frac 16}(\mathcal U)=-\frac{1}{30}+\frac{\beta}{10}+\frac{\beta^2}{2}+\frac{\beta^3}{3}-\frac{\alpha^2}{6}(1+2\beta)+i\alpha(\frac{1}{10}+\beta+\beta^2-\alpha^2),$$
$$Z_{\alpha,\beta,\frac 16}(\mathcal O_{X})=\frac{\beta^3}{6}-\frac{\alpha^2\beta}{6}+i\alpha(\frac{\beta^2}{2}-\frac{\alpha^2}{2}).$$
\end{lemma}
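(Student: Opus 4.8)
The plan is to treat the whole lemma as a bookkeeping exercise: every assertion follows by substituting known Chern-class data into formula (\ref{ch}) and then applying the explicit twisting formulas for $\mathrm{ch}^\beta = e^{-\beta H}\cdot\mathrm{ch}$ recalled just before Proposition \ref{BG}. There is nothing to prove beyond organizing the arithmetic, so I would proceed object by object.

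First I would dispatch the line bundles. Since $\mathcal{O}_X(n)$ has rank $1$, $c_1=n$, and $c_2=c_3=0$, formula (\ref{ch}) (equivalently $\mathrm{ch}(\mathcal{O}_X(n))=e^{nH}$) gives $\mathrm{ch}(\mathcal{O}_X(n))=1+nH+\tfrac{n^2}{2}H^2+\tfrac{n^3}{6}H^3$. Plugging $r=1$, $c=n$, $d=\tfrac{n^2}{2}$, $e=\tfrac{n^3}{6}$ into the displayed formulas for $\mathrm{ch}^\beta_i$ reproduces the first claimed expression term by term. Next I would handle $\mathcal{U}$ and $\mathcal{Q}(-1)$. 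I would recall from \cite{LP} (or read off from the defining sequence $0\to\mathcal U\to V\otimes\mathcal O_X\to\mathcal Q\to 0$) that $\mathcal{U}$ has $c_1=-1$, $c_2=2$, $c_3=0$, so that (\ref{ch}) yields $\mathrm{ch}(\mathcal{U})=2-H+\tfrac{1}{10}H^2+\tfrac{1}{30}H^3$; one more twist gives the stated $\mathrm{ch}^\beta(\mathcal{U})$. For $\mathcal{Q}$ I would use $c(\mathcal U)\,c(\mathcal Q)=1$ in $H^*(X,\Z)$ together with the relations $H^2=5L$, $H\cdot L=P$, $L^2=0$ to obtain $c(\mathcal Q)=1+H+3L+P$, i.e. $(c_1,c_2,c_3)=(1,3,1)$; then the standard Chern-class twisting formula for a rank $3$ bundle tensored by $\mathcal{O}(-1)$ gives $(c_1,c_2,c_3)\bigl(\mathcal Q(-1)\bigr)=(-2,8,-2)$. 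Feeding this into (\ref{ch}) produces $\mathrm{ch}(\mathcal Q(-1))=3-2H+\tfrac{2}{5}H^2+\tfrac{1}{15}H^3$, and the twist yields the remaining expression.

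Finally, with all the $\mathrm{ch}^\beta_i$ in hand I would substitute directly into the definition of $Z_{\alpha,\beta,s}$ at $s=\tfrac16$, separating the real part $-\mathrm{ch}_3^\beta+\tfrac{\alpha^2}{6}H^2\cdot\mathrm{ch}_1^\beta$ from the imaginary part $\alpha H\cdot\mathrm{ch}_2^\beta-\tfrac{\alpha^3 H^3}{2}\mathrm{ch}_0^\beta$, and collect terms for each of the four exceptional objects.

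I do not expect any genuine obstacle: the lemma is entirely computational. The only points demanding care are transporting the Chern classes of $\mathcal Q$ correctly through the twist by $\mathcal{O}(-1)$, and keeping the normalization conventions straight, namely the identifications $H^3=5$, $P=1$ and the reading of the top intersection pairings as coefficients of $H^3$; the displayed central charges record these values up to the immaterial overall positive factor $H^3$, which does not affect (semi)stability.
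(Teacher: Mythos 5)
Your proposal is correct and is essentially the paper's (implicit) argument: the paper gives no written proof, merely citing \cite{LP} for the Chern classes of $\mathcal U$ and $\mathcal Q$ and leaving the substitution into formula (\ref{ch}), the twist $\mathrm{ch}^\beta=e^{-\beta H}\mathrm{ch}$, and the evaluation of $Z_{\alpha,\beta,\frac16}$ as routine arithmetic, which is exactly what you carry out (and your values $\mathrm{ch}(\mathcal U)=2-H+\tfrac1{10}H^2+\tfrac1{30}H^3$ and $\mathrm{ch}(\mathcal Q(-1))=3-2H+\tfrac25H^2+\tfrac1{15}H^3$ check out). Your closing remark about the overall factor of $H^3=5$ in the displayed central charges is also the right reading of the paper's normalization.
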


Since $(\mathcal O_X(-1),\mathcal Q(-1),\mathcal U,\mathcal O_X)$ is a full strong exceptional collection, it follows that the extension closure $\mathfrak C:=\langle\mathcal O_{X}(-1)[3],\mathcal Q(-1)[2],\mathcal U[1],\mathcal O_{X}\rangle$ is the heart of a bounded t-structure on $\mathrm{D}^b(X)$ \cite[Lemma 3.14]{Curves}.

Define the following region in the upper half-plane:
\begin{equation}\label{D}D:=\{(\beta,\alpha)\in\mathbb H\ |\ \beta< -\frac 12,\alpha<\beta+1\}.\end{equation}
Following Schmidt (\cite[Lemma 4.4, Lemma 4.6]{Sch14}), we prove the next lemma. The part (i) of it asserts, roughly speaking, that the $Z_{\alpha,\beta,s}$-values of objects of $\mathfrak C$ lie in one half-plane.

\begin{lemma}\label{two properties}Take $(\beta,\alpha)\in D$.\\
(i) There exists $s>\frac 16$ and $\phi_1\in\mathbb R$ such that
$$Z_{\alpha,\beta,s}(\mathfrak C)\subset\{re^{\pi\phi i}\colon r\ge 0,\phi_1-1<\phi\le\phi_1\},$$ 
(ii) $\mathfrak C\subset\langle\mathcal A^{\alpha,\beta}(X),\mathcal A^{\alpha,\beta}(X)[1]\rangle$.
\end{lemma}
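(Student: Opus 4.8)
The plan is to reduce both statements to the four generators $\mathcal O_X(-1)[3],\mathcal Q(-1)[2],\mathcal U[1],\mathcal O_X$ of $\mathfrak C$, using that $\mathfrak C$ is their extension closure and (being the heart attached to a strong exceptional collection) a finite-length category with these as its simple objects. For (i), the set $S=\{re^{\pi\phi i}\colon r\ge 0,\ \phi_1-1<\phi\le\phi_1\}$ is a half-plane through the origin, hence closed under addition and under non-negative scaling: if two summands forced a sum onto the excluded open boundary ray, both would have to lie on it, which is impossible. Since every object of $\mathfrak C$ admits a finite filtration with subquotients among the four generators, its class in $K(\mathrm D^b(X))$ is a non-negative integer combination of their classes, and $Z_{\alpha,\beta,s}$ is additive; so it suffices to find $s>\tfrac16$ and $\phi_1$ placing the four generator-values in $S$. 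For (ii), since $\langle\mathcal A^{\alpha,\beta}(X),\mathcal A^{\alpha,\beta}(X)[1]\rangle$ is extension-closed, it suffices to place each generator in it.

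For (i) I would evaluate the four charges from the preceding lemma, using that an odd shift negates $Z$, so $Z_{\alpha,\beta,s}(\mathcal O_X(-1)[3])=-Z_{\alpha,\beta,s}(\mathcal O_X(-1))$ and $Z_{\alpha,\beta,s}(\mathcal U[1])=-Z_{\alpha,\beta,s}(\mathcal U)$, while $Z_{\alpha,\beta,s}(\mathcal Q(-1)[2])=Z_{\alpha,\beta,s}(\mathcal Q(-1))$. The imaginary part is independent of $s$, and the defining inequalities of $D$ in (\ref{D}) fix its sign: after factoring the quadratics and using $0<\alpha<\beta+1<-\beta$ (the last from $\beta<-\tfrac12$) one gets $\Im Z_{\alpha,\beta,s}(\mathcal O_X)>0$ and $\Im Z_{\alpha,\beta,s}(\mathcal Q(-1)[2])<0$ on $D$. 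I would then order the four nonzero values by argument, take $\phi_1$ to be the argument of the extreme point $Z_{\alpha,\beta,s}(\mathcal O_X(-1)[3])$, and show the total angular spread is $<\pi$, the binding pair being $\bigl(\mathcal O_X(-1),\mathcal U[1]\bigr)$. Concretely this is a handful of sign conditions on the determinants $\Im(\overline{Z_i}\,Z_j)$, polynomial in $(\beta,\alpha)$; they hold strictly at interior points of $D$ already for $s=\tfrac16$, hence persist for $s$ slightly above $\tfrac16$, which is what makes $Z_{\alpha,\beta,s}$ a genuine Bridgeland charge.

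For (ii) the key structural remark is that, as $\mathcal A^{\alpha,\beta}(X)=\langle\mathcal F'_{\alpha,\beta}[1],\mathcal T'_{\alpha,\beta}\rangle$ is a tilt of $\Coh^\beta(X)$, we have $\mathcal T'_{\alpha,\beta}\subset\mathcal A^{\alpha,\beta}(X)$ and $\mathcal F'_{\alpha,\beta}\subset\mathcal A^{\alpha,\beta}(X)[-1]$; applying the torsion-pair sequence to any object and shifting gives $\Coh^\beta(X)[1]\subset\langle\mathcal A^{\alpha,\beta}(X),\mathcal A^{\alpha,\beta}(X)[1]\rangle$. Comparing $\mu(\mathcal O_X)=0,\ \mu(\mathcal U)=-\tfrac12,\ \mu(\mathcal Q(-1))=-\tfrac23,\ \mu(\mathcal O_X(-1))=-1$ with the forced range $\beta\in(-1,-\tfrac12)$, I would place the generators: $\mathcal U\in\mathcal T_\beta$, so $\mathcal U[1]\in\Coh^\beta(X)[1]$ with no stability input, and for $\beta>-\tfrac23$ the same works for $\mathcal Q(-1)\in\mathcal F_\beta$, giving $\mathcal Q(-1)[2]\in\Coh^\beta(X)[1]$. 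The line bundles are handled by their tilt-slope: $\mathcal O_X$ is tilt-semistable by Remark \ref{line bundles} with $\nu_{\alpha,\beta}(\mathcal O_X)>0$ (as $\alpha^2<\beta^2$), hence in $\mathcal T'_{\alpha,\beta}\subset\mathcal A^{\alpha,\beta}(X)$; and $\mathcal O_X(-1)[1]\in\Coh^\beta(X)$ is a tilt-semistable line-bundle shift with $\nu_{\alpha,\beta}<0$ (as $\alpha^2<(1+\beta)^2$), hence in $\mathcal F'_{\alpha,\beta}\subset\mathcal A^{\alpha,\beta}(X)[-1]$, so $\mathcal O_X(-1)[3]\in\mathcal A^{\alpha,\beta}(X)[1]$.

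The one genuinely nontrivial point, and the step I expect to require the most care, is the remaining case of $\mathcal Q(-1)$ when $-1<\beta\le-\tfrac23$: there $\mathcal Q(-1)\in\mathcal T_\beta\subset\Coh^\beta(X)$, and to get $\mathcal Q(-1)[2]\in\mathcal A^{\alpha,\beta}(X)[1]$ I must show $\mathcal Q(-1)\in\mathcal F'_{\alpha,\beta}$, i.e.\ that it is tilt-semistable with $\nu_{\alpha,\beta}(\mathcal Q(-1))\le 0$ on this part of $D$ (the slope sign is again a polynomial check). Unlike the line bundles, $\overline\Delta_H(\mathcal Q(-1))=40>0$, so $\mathcal Q(-1)$ can a priori be destabilized at a semicircular wall and tilt-semistability is not automatic. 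I would establish it via the wall analysis of Propositions \ref{numerical} and \ref{actual} together with Lemma \ref{stable}: a destabilizer would be a tilt-stable $F$ with $\mathrm{ch}_0(F)>0$, $\mu(F)<-\tfrac23$ and $\overline\Delta_H(F)<40$, and integrality of $\Lambda$ leaves only finitely many numerical candidates; the task is to verify that none of them gives an actual wall meeting $\{\beta\le-\tfrac23\}\cap D$. This is precisely where the defining inequalities of $D$ are used, and it is the reason the region is cut out by exactly these bounds.
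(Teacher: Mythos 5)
Your proposal is essentially correct and follows the same overall strategy as the paper: for (i), reduce to the four generators of $\mathfrak C$ by additivity of $Z_{\alpha,\beta,s}$ and closure of the half-plane under sums, verify the half-plane condition at $s=\frac16$ by explicit sign computations on $D$, and perturb $s$ slightly above $\frac16$ using that the four values are nonzero; for (ii), place each generator using the torsion pairs $(\mathcal T'_{\alpha,\beta},\mathcal F'_{\alpha,\beta})$ and tilt-(semi)stability, with $\mathcal U[1]$ handled purely by the torsion-pair decomposition and the line bundles by the sign of $\nu_{\alpha,\beta}$ exactly as in the paper. Two points of divergence are worth recording. First, in (i) your parenthetical identification of the extremal ray is not uniformly correct: the paper's computation shows that $D$ is split by the curve $5\alpha^2-5\beta^2-12\beta-6=0$ (where $Z_{\alpha,\beta,\frac16}(\mathcal O_X(-1))$ and $Z_{\alpha,\beta,\frac16}(\mathcal Q(-1))$ become $\mathbb R$-proportional), and one must take $\phi_1=\frac1\pi\arg Z_{\alpha,\beta,s}(\mathcal O_X(-1)[3])$ on one side but $\phi_1=\frac1\pi\arg Z_{\alpha,\beta,s}(\mathcal Q(-1)[2])$ on the other; your stated binding pair $(\mathcal O_X(-1),\mathcal U[1])$ fails on part of $D$, though your actual procedure (order by argument, take the extreme value, check the spread is $<\pi$) would detect and repair this once the determinant signs are computed. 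Second, for the genuinely nontrivial step --- tilt-semistability of $\mathcal Q(-1)$ on $-1<\beta<-\frac23$ --- you propose a finite enumeration of numerical destabilizers via Theorem \ref{rank bound} and Lemma \ref{stable}; the paper instead observes that $H^2\cdot\mathrm{ch}_1^{-1}(\mathcal Q(-1))$ attains the minimal positive value, so by Lemma \ref{minimal} and Proposition \ref{actual}~(3) no actual wall crosses the ray $\beta=-1$, and since $\beta_-(\mathcal Q(-1))<-1$ the nesting of walls forces semistability on the whole strip. Your route should also work (after checking $\mu$-stability of $\mathcal Q(-1)$, which the paper does via $h^0(\mathcal Q(-1))=h^0(\mathcal Q^\smallvee)=0$), but it is considerably more laborious than the minimal-$\mathrm{ch}_1^\beta$ argument, which is the device the paper uses repeatedly elsewhere.
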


\textit{Proof.} (i) We can plot the sets of points $(\beta,\alpha)\in\mathbb R^2$ where two of complex numbers $Z_{\alpha,\beta,\frac 16}(\mathcal O_{X}(-1)),Z_{\alpha,\beta,\frac 16}(\mathcal Q(-1)),Z_{\alpha,\beta,\frac 16}(\mathcal U),Z_{\alpha,\beta,\frac 16}(\mathcal O_{X})$ are linearly dependent over $\mathbb R$.
\begin{figure}[h]
\center{\includegraphics[scale=0.4]{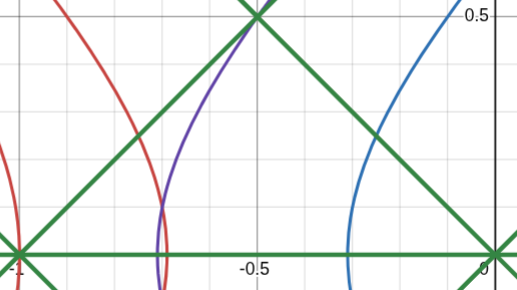}}
\caption{Curves in which some of $Z_{\alpha,\beta,\frac 16}(\mathcal O_{X}(-1)),Z_{\alpha,\beta,\frac 16}(\mathcal Q(-1)),Z_{\alpha,\beta,\frac 16}(\mathcal U),Z_{\alpha,\beta,\frac 16}(\mathcal O_{X})$ are $\mathbb R$-linearly dependent. The horizontal line is the $\beta$-axis, the vertical one is the $\alpha$-axis.}
\end{figure}
The graph shows that the region $D$ is divided into four subregions (by the purple curve, where $Z_{\alpha,\beta,\frac 16}(\mathcal O_X(-1)$ is proportional to $Z_{\alpha,\beta,\frac 16}(\mathcal Q(-1))$, and a red curve, where $Z_{\alpha,\beta,\frac 16}(\mathcal O_X)$ is proportional to $Z_{\alpha,\beta,\frac 16}(\mathcal U)$). It suffices to prove that for one point in each of the subregions the $Z_{\alpha,\beta,\frac 16}$-values of objects $\mathcal O_{X}(-1)[3],\mathcal Q(-1)[2],\mathcal U[1]$ and $\mathcal O_{X}$ belong to one half-plane as described in the statement. Indeed, for any $(\beta,\alpha)\in D$ the $Z_{\alpha,\beta,\frac 16}$-values of these four objects do not vanish (otherwise at least three curves would intersect $D$) and $Z_{\alpha,\beta,s}(E)$ depends continuously on $s$, hence there exists $s>\frac 16$ such that $Z_{\alpha,\beta,s}$-values of four generators of $\mathfrak C$ also lie in one half-plane. A calculation shows that at those points of $D$ where $5\alpha^2 - 5\beta^2 - 12\beta - 6 < 0$ (to the right of the purple curve on the plot) for some $s>\frac 16$ we can take $\phi_1=\frac 1\pi\arg Z_{\alpha,\beta,s}(\mathcal O_X(-1)[3])$, respectively, if $5\alpha^2 - 5\beta^2 - 12\beta - 6 >  0$, then we take $\phi_1=\frac 1\pi\arg Z_{\alpha,\beta,s}(\mathcal Q(-1)[2])$, and if $5\alpha^2 - 5\beta^2 - 12\beta - 6 = 0$, then at least one of these two values works. 

(ii) Note that $\nu_{\alpha,\beta}(\mathcal O_X)>0$ for $(\beta,\alpha)\in D$, so tilt-stability of line bundles implies that $\mathcal O_X\in\mathcal T'_{\alpha,\beta}\subset\mathcal A^{\alpha,\beta}(X)$. And $\nu_{\alpha,\beta}(\mathcal O_X(-1)[1])<0$ in $D$, so $\mathcal O_X(-1)[3]\in\mathcal F'_{\alpha,\beta}[2]\subset\mathcal A^{\alpha,\beta}(X)[1]$.

Let us show that $\mathcal Q(-1)[2]\in\mathcal A^{\alpha,\beta}(X)[1]$. Note that the vector bundle $\mathcal Q(-1)$ is $\mu$-stable by the criterion \cite[Remark 1.2.6 b)]{OSS} (which holds for $X_5$ too), because $h^0(\mathcal Q(-1))=0$ (since $(\mathcal Q(-1),\mathcal O_{X})$ is an exceptional pair) and $h^0((\mathcal Q(-1))^\vee(-1))=h^0(\mathcal Q^\vee)=0$ (since $(\mathcal Q^\vee,\mathcal O_{X})$ is an exceptional pair). Assume that $\beta<-\frac 23$. Note that $\mathcal Q(-1)$ is $\nu_{\alpha,-1}$-semistable by Lemma \ref{minimal}, because $H^2\cdot\mathrm{ch}_1^{-1}(\mathcal Q(-1))$ has the least positive value (among objects of $\mathrm{Coh}^{-1}(X)$). Since $\beta_-(\mathcal Q(-1))<-1$, $\mathcal Q(-1)$ is also $\nu_{\alpha,\beta}$-semistable for all $-1<\beta<-\frac 23,\alpha>0$. Since $\nu_{\alpha,\beta}(\mathcal Q(-1))<0$ for $-1<\beta<-\frac 23$, we get $\mathcal Q(-1)[2]\in\mathcal F'_{\alpha,\beta}[2]\subset\mathcal A^{\alpha,\beta}(X)[1]$. Assume now that $\beta\ge-\frac 23$. Note that $\mathcal Q(-1)[1]$ is $\nu_{\alpha,-\frac 12}$-semistable by Lemma \ref{minimal}, since $H^2\cdot\mathrm{ch}_1^{-\frac 12}(\mathcal Q(-1)[1])$ has the least positive value. Also $\beta_+(\mathcal Q(-1)[1])>-\frac 12$, hence $\mathcal Q(-1)[1]$ is $\nu_{\alpha,\beta}$-semistable for all $-\frac 23\le\beta<-\frac 12,\alpha>0$. We have $\nu_{\alpha,\beta}(\mathcal Q(-1)[1])>0$ for $-\frac 23\le\beta<-\frac 12$, so $\mathcal Q(-1)[2]\in\mathcal T'_{\alpha,\beta}[1]\subset\mathcal A^{\alpha,\beta}(X)[1]$.

Since $(\mathcal T'_{\alpha,\beta},\mathcal F'_{\alpha,\beta})$ is a torsion pair on the category $\mathrm{Coh}^\beta(X)$, for $\beta<-\frac 12$ we get an exact sequence
$$0\to \mathcal U'\to\mathcal U\to \mathcal U''\to 0,$$
where $\mathcal U'\in\mathcal T'_{\alpha,\beta},\mathcal U''\in\mathcal F'_{\alpha,\beta}$. Then $\mathcal U'[1]\in\mathcal A^{\alpha,\beta}(X)[1],\mathcal U''[1]\in \mathcal A^{\alpha,\beta}(X)$, so $\mathcal U[1]\in\nolinebreak\langle\mathcal A^{\alpha,\beta}(X),\mathcal A^{\alpha,\beta}(X)[1]\rangle$. $\square$

For the purpose of reference we include the following lemma.

\begin{lemma}\label{C A} Let $\mathcal C$ and $\mathcal A$ be two hearts of bounded t-structures on a triangulated category $\mathfrak D$. If $\mathcal C\subset\langle\mathcal A,\mathcal A[1]\rangle$, then for any $E\in\mathcal C$ there is an exact triangle
\begin{equation}\label{triangle C A}E'\overset{a}{\to} E\to E''\to E'[1]
\end{equation}
with $E'\in\mathcal C\cap(\mathcal A[1]),E''\in\mathcal C\cap\mathcal A$.
\end{lemma}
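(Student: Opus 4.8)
The plan is to produce the triangle (\ref{triangle C A}) as the truncation triangle of $E$ with respect to the $t$-structure whose heart is $\mathcal A$, and then to check that its outer terms happen to lie in $\mathcal C$ as well. First I would observe that $\langle\mathcal A,\mathcal A[1]\rangle$ is precisely the subcategory $\mathcal D^{\le 0}_{\mathcal A}\cap\mathcal D^{\ge -1}_{\mathcal A}$ of objects whose $\mathcal A$-cohomology is concentrated in degrees $-1$ and $0$ (write $H^i_{\mathcal A}$ for cohomology with respect to the t-structure with heart $\mathcal A$). Hence, for $E\in\mathcal C$, the standard truncation triangle for this t-structure reads $E'\to E\to E''\to E'[1]$ with $E'=H^{-1}_{\mathcal A}(E)[1]\in\mathcal A[1]$ and $E''=H^{0}_{\mathcal A}(E)\in\mathcal A$. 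This already realizes the desired triangle with $E'\in\mathcal A[1]$ and $E''\in\mathcal A$; what remains is to show that $E'$ and $E''$ also belong to $\mathcal C$.

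The main obstacle is to extract from the one-sided hypothesis $\mathcal C\subseteq\langle\mathcal A,\mathcal A[1]\rangle$ the complementary amplitude bound $\mathcal A\subseteq\langle\mathcal C,\mathcal C[-1]\rangle$, that is, that objects of $\mathcal A$ have $\mathcal C$-cohomology concentrated in degrees $0$ and $1$. I would deduce this by comparing the aisles of the two bounded t-structures. The inclusion $\mathcal C\subseteq\mathcal D^{\le 0}_{\mathcal A}$ forces $\mathcal D^{\le 0}_{\mathcal C}\subseteq\mathcal D^{\le 0}_{\mathcal A}$, since every generator $\mathcal C[k]$ with $k\ge 0$ lies in $\mathcal D^{\le -k}_{\mathcal A}\subseteq\mathcal D^{\le 0}_{\mathcal A}$ and the latter is extension-closed; passing to right orthogonals then gives $\mathcal D^{\ge 0}_{\mathcal A}\subseteq\mathcal D^{\ge 0}_{\mathcal C}$, so in particular $\mathcal A\subseteq\mathcal D^{\ge 0}_{\mathcal C}$. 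Symmetrically, $\mathcal C\subseteq\mathcal D^{\ge -1}_{\mathcal A}$ yields $\mathcal D^{\ge 0}_{\mathcal C}\subseteq\mathcal D^{\ge -1}_{\mathcal A}$, whence, by left orthogonals and a shift, $\mathcal D^{\le 0}_{\mathcal A}\subseteq\mathcal D^{\le 1}_{\mathcal C}$ and thus $\mathcal A\subseteq\mathcal D^{\le 1}_{\mathcal C}$. Together these give $\mathcal A\subseteq\mathcal D^{\ge 0}_{\mathcal C}\cap\mathcal D^{\le 1}_{\mathcal C}=\langle\mathcal C,\mathcal C[-1]\rangle$.

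With this in hand I would finish by bookkeeping of $\mathcal C$-cohomology. From $E''\in\mathcal A$ we get $H^{j}_{\mathcal C}(E'')=0$ unless $j\in\{0,1\}$, and from $E'\in\mathcal A[1]$ we get $H^{j}_{\mathcal C}(E')=0$ unless $j\in\{-1,0\}$; of course $H^{j}_{\mathcal C}(E)=0$ for $j\ne 0$ since $E\in\mathcal C$. Feeding these vanishings into the long exact sequence of $\mathcal C$-cohomology attached to the triangle $E'\to E\to E''$, the segment around degree $-1$ forces $H^{-1}_{\mathcal C}(E')=0$, so $E'\in\mathcal C$, while the segment around degree $1$ forces $H^{1}_{\mathcal C}(E'')=0$, so $E''\in\mathcal C$. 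Hence $E'\in\mathcal C\cap\mathcal A[1]$ and $E''\in\mathcal C\cap\mathcal A$, as required. Alternatively, one may simply invoke the theory of Happel--Reiten--Smal\o: the hypothesis says exactly that $\mathcal C$ is the tilt of $\mathcal A$ at the torsion pair $(\mathcal T,\mathcal F)$ with $\mathcal T=\mathcal C\cap\mathcal A$ and $\mathcal F=\mathcal C[-1]\cap\mathcal A$, and (\ref{triangle C A}) is then the resulting torsion decomposition of $E$ inside $\mathcal C$. In either route the only delicate point is the orthogonality argument of the second paragraph; the rest is formal.
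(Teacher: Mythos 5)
Your proof is correct, and it takes a more self-contained route than the paper. The paper's argument is purely a double application of \cite[Proposition 2.3.2 (b)]{BMT} (essentially the Happel--Reiten--Smal\o{} tilting correspondence, which you mention as your ``alternative'' at the end): first to get that $(\mathcal A\cap\mathcal C,\mathcal A\cap(\mathcal C[-1]))$ is a torsion pair on $\mathcal A$, hence $\mathcal A\subset\langle\mathcal C[-1],\mathcal C\rangle$, and then again with the roles reversed to conclude that $(\mathcal C\cap(\mathcal A[1]),\mathcal C\cap\mathcal A)$ is a torsion pair on $\mathcal C$, whose decomposition of $E$ is the desired triangle. You instead produce the triangle directly as the $\mathcal A$-truncation of $E$, prove the reverse amplitude bound $\mathcal A\subset\langle\mathcal C,\mathcal C[-1]\rangle$ by hand via the aisle comparison $\mathcal D^{\le 0}_{\mathcal C}\subseteq\mathcal D^{\le 0}_{\mathcal A}$ and $\mathcal D^{\ge 0}_{\mathcal C}\subseteq\mathcal D^{\ge -1}_{\mathcal A}$ (using boundedness and extension-closure of aisles, then passing to orthogonals), and finish with the long exact sequence of $\mathcal C$-cohomology; all of these steps check out, including the two boundary terms $H^{-1}_{\mathcal C}(E')=0$ and $H^{1}_{\mathcal C}(E'')=0$. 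What your version buys is independence from the external reference and an explicit identification of $E'$ and $E''$ as $H^{-1}_{\mathcal A}(E)[1]$ and $H^{0}_{\mathcal A}(E)$; what the paper's version buys is brevity and the slightly stronger packaged conclusion that the decomposition is functorial as a torsion decomposition in the abelian category $\mathcal C$ (in particular that $E'\to E\to E''$ is a short exact sequence in $\mathcal C$, which your argument also yields once both outer terms are known to lie in $\mathcal C$, but which the torsion-pair formulation gives for free).
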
 

\textit{Proof.} By \cite[Proposition 2.3.2 (b)]{BMT} $(\mathcal A\cap\mathcal C,\mathcal A\cap(\mathcal C[-1]))$ is a torsion pair on the category $\mathcal A$. In particular, $\mathcal A\subset\langle\mathcal C[-1],\mathcal C\rangle$. Applying \cite[Proposition 2.3.2 (b)]{BMT} again, we get that $(\mathcal C[-1]\cap\mathcal A,\mathcal C[-1]\cap(\mathcal A[-1]))$ is torsion pair on the category $\mathcal C[-1]$. Hence $(\mathcal C\cap(\mathcal A[1]),\mathcal C\cap\mathcal A)$ is a torsion pair on the category $\mathcal C$ and for any $E\in\mathcal C$ we have an exact triple (\ref{triangle C A}). $\square$

In \cite{Sch18,Fano} an analogue of the following Proposition was used implicitly. Let us justify it in more detail.

\begin{proposition}\label{tilted heart}Suppose that $\mathcal C$ is the heart of a bounded t-structure on $\mathrm D^b(X)$ satisfying the properties (i) and (ii) of the category $\mathfrak C$ from Lemma \ref{two properties}. For any
	$\gamma\in\mathbb R$ we define a torsion pair
	\begin{equation}\label{T'' F''}
		\begin{split}
			& \mathcal T''_{\gamma}=\{E\in\mathcal A^{\alpha,\beta}(X)\ |\ \text{any quotient}\ E\twoheadrightarrow G
			\,\text{satisfies}\,\lambda_{\alpha,\beta, s}(G)>\gamma \}, \\
			& \mathcal F''_{\gamma} = \{E \in \mathcal A^{\alpha, \beta}(X)\ |\ \text{any
				subobject}\ 0\ne F\hookrightarrow E\,\text{satisfies}\, \lambda_{\alpha,\beta, s}(F)\leq\gamma\}.
		\end{split}
	\end{equation}
	Then there exists a $\gamma\in\mathbb R$ such that
	$$\langle \mathcal T''_{\gamma},\mathcal F''_{\gamma}[1]\rangle =\mathcal C.$$
\end{proposition}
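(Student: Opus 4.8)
The plan is to realize $\mathcal C$ as a single tilt of $\mathcal A:=\mathcal A^{\alpha,\beta}(X)$, the only issue being to pin down the correct slope $\gamma$ from the boundary phase $\phi_1$ of property (i). Fix the $s>\frac16$ provided by property (i); then $(\mathcal A,Z_{\alpha,\beta,s})$ is a genuine Bridgeland stability condition by \cite{Li}, so Harder--Narasimhan filtrations with respect to $\lambda:=\lambda_{\alpha,\beta,s}$ exist, every nonzero object of $\mathcal A$ has $Z$-phase in $(0,1]$, and $\lambda$ is a strictly increasing function of this phase. For $E\in\mathcal A$ I will repeatedly use the elementary bounds $\phi_{\min}(E)\le\frac1\pi\arg Z(E)\le\phi_{\max}(E)$ relating the extremal HN-phases of $E$ to the argument of its central charge.

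First I set up the two torsion pairs. By property (ii) and Lemma \ref{C A}, $(\mathcal C\cap\mathcal A[1],\,\mathcal C\cap\mathcal A)$ is a torsion pair on $\mathcal C$, so $\mathcal C=\langle\mathcal C\cap\mathcal A[1],\mathcal C\cap\mathcal A\rangle$, and dually $(\mathcal C\cap\mathcal A,\,\mathcal C[-1]\cap\mathcal A)$ is a torsion pair on $\mathcal A$. Writing $\phi_1$ for the boundary phase from property (i), I set $\mu:=\phi_1-1$ and take $\gamma:=-\cot(\pi\mu)$, the $\lambda$-slope attached to phase $\mu$; one checks $\mu\in(0,1)$ because the window $(\phi_1-1,\phi_1]$ of property (i) straddles phase $1$ (for $\mathcal C=\mathfrak C$ this is visible from the generators $\mathcal O_X\in\mathcal A$ and $\mathcal O_X(-1)[3],\mathcal Q(-1)[2]\in\mathcal A[1]$ used in Lemma \ref{two properties}). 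With this choice $\mathcal T''_\gamma$ (resp. $\mathcal F''_\gamma$) consists of the objects of $\mathcal A$ whose HN-factors all have phase $>\mu$ (resp. $\le\mu$).

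The heart of the argument is the pair of inclusions $\mathcal C\cap\mathcal A\subseteq\mathcal T''_\gamma$ and $\mathcal C[-1]\cap\mathcal A\subseteq\mathcal F''_\gamma$. For the first, given a nonzero $E\in\mathcal C\cap\mathcal A$ not in $\mathcal T''_\gamma$ I take the minimal HN-quotient $E\onto G$ with $G$ $\lambda$-semistable of phase $\le\mu$, and show $G\in\mathcal C[-1]$: decomposing $G$ along the torsion pair on $\mathcal A$ produces a subobject $T'\in\mathcal C\cap\mathcal A$ with $\frac1\pi\arg Z(T')\le\phi_{\max}(T')\le\mu$, whereas property (i) forces $\frac1\pi\arg Z(T')>\phi_1-1=\mu$ whenever $T'\neq0$; hence $T'=0$ and $G\in\mathcal C[-1]$. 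Then $\Hom(E,G)=0$, since $E\in\mathcal C$, $G\in\mathcal C[-1]$ and a heart has no negative self-extensions, contradicting the nonzero surjection $E\onto G$. The inclusion $\mathcal C[-1]\cap\mathcal A\subseteq\mathcal F''_\gamma$ is proved symmetrically, using a maximal HN-subobject $K\into F$ of phase $>\mu$, showing $K\in\mathcal C$ via the upper bound $\frac1\pi\arg Z(\,\cdot\,)\le\phi_1$ in property (i), and contradicting $\Hom(K,F)=0$.

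To conclude, $(\mathcal C\cap\mathcal A,\,\mathcal C[-1]\cap\mathcal A)$ and $(\mathcal T''_\gamma,\mathcal F''_\gamma)$ are both torsion pairs on $\mathcal A$ with $\mathcal C\cap\mathcal A\subseteq\mathcal T''_\gamma$ and $\mathcal C[-1]\cap\mathcal A\subseteq\mathcal F''_\gamma$; two torsion pairs nested in this way coincide, since for $E\in\mathcal T''_\gamma$ the $\mathcal F''$-part of its decomposition along the first pair is simultaneously a quotient of a torsion object and torsion-free, hence zero. Thus $\mathcal T''_\gamma=\mathcal C\cap\mathcal A$ and $\mathcal F''_\gamma=\mathcal C[-1]\cap\mathcal A$, so that $\langle\mathcal T''_\gamma,\mathcal F''_\gamma[1]\rangle=\langle\mathcal C\cap\mathcal A,(\mathcal C[-1]\cap\mathcal A)[1]\rangle=\langle\mathcal C\cap\mathcal A,\mathcal C\cap\mathcal A[1]\rangle=\mathcal C$. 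I expect the main obstacle to be exactly the phase bookkeeping in these two inclusions: property (i) controls only the argument of $Z$ of a whole object, an average of its HN-phases, so the argument must convert this into control of the extremal phases $\phi_{\min},\phi_{\max}$ through semistable sub- and quotient objects, and it is precisely the value $\mu=\phi_1-1$ that makes both inclusions go through at once.
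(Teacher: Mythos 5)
Your proof is correct and follows essentially the same route as the paper's: the same choice $\gamma=-\cot(\pi\phi_1)$, the same decomposition of the would-be destabilizer along the torsion pair $(\mathcal C\cap\mathcal A,\,\mathcal C[-1]\cap\mathcal A)$ furnished by Lemma \ref{C A}, the same use of property (i) to force one piece of that decomposition to vanish, and the same final contradiction with $\Hom(\mathcal C,\mathcal C[-1])=0$. The only differences are organizational --- you pass to HN factors to get a semistable destabilizer and finish by matching the two torsion pairs on $\mathcal A^{\alpha,\beta}(X)$, whereas the paper works with an arbitrary destabilizer and concludes from one inclusion of hearts --- and neither change affects the substance.
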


\textit{Proof.} We take $\gamma=-\cot \pi\phi_1$, where $\phi_1$ satisfies the property (i) of Lemma \ref{two properties}. Since both $\mathcal C$ and $\langle \mathcal T''_{\gamma},\mathcal F''_{\gamma}[1]\rangle$ are hearts of bounded t-structures on $\mathrm D^b(X)$, it suffices to prove that $\mathcal C\subset\langle \mathcal T''_{\gamma},\mathcal F''_{\gamma}[1]\rangle$. 
	
Take $E\in\mathcal C$ to show that $E\in\langle \mathcal T''_{\gamma},\mathcal F''_{\gamma}[1]\rangle$. We denote $\mathcal A:=\mathcal A^{\alpha,\beta}(X)$. By Lemma \ref{C A} we reduce the problem to the case when $E\in\mathcal C\cap\mathcal A$ or $E\in\mathcal C\cap(\mathcal A[1])$. Assume that $E\in\mathcal C\cap(\mathcal A[1])$ (for $E\in\mathcal C\cap\mathcal A$ one can give a dual argument). If $E\notin\mathcal F''_\gamma[1]$, then there exists a subobject $f:F\hookrightarrow E[-1]$ in $\mathcal A$ with $\lambda_{\alpha,\beta,s}(F)>\gamma$. Note that, since $0\neq F\in\mathcal A$ and $(\mathcal A,Z_{\alpha,\beta,s})$ is a Bridgeland stability condition, we have $Z_{\alpha,\beta,s}(F)\neq 0$. By Lemma \ref{C A} there is an exact triangle $F'\overset{f'}{\to} F\to F''\to F'[1]$ with $F'\in\mathcal A\cap\mathcal C,F''\in\mathcal A\cap(\mathcal C[-1])$. The object $F'$ should be nonzero by the assumption on the $\lambda_{\alpha,\beta,s}$-slope of $F$ and the property (i) of Lemma \ref{two properties}, hence the morphism $f':F'\to F$ is also nonzero. The composition $F'\overset{f'}{\to}F\overset{f}{\to}E[-1]$ has to be nonzero, since it is the composition of a nonzero morphism in $\mathcal A$ with a monomorphism $f$. However, $F'\in\mathcal C,E[-1]\in\mathcal C[-1]$, and existence of such a morphism contradicts the definition of a t-structure. $\square$

Lemma \ref{two properties} and Proposition \ref{tilted heart} together imply the following corollary.

\begin{corollary}\label{heart}Let $(\beta,\alpha)\in D$, and let $ \mathcal T''_{\gamma}$, $\mathcal F''_{\gamma}$ be the torsion pair on the category $\mathcal A^{\alpha,\beta}(X)$ defined in (\ref{T'' F''}). There are $s>\frac 16$ and $\gamma\in\mathbb R$ such that
$\langle \mathcal T''_{\gamma},\mathcal F''_{\gamma}[1]\rangle =\mathfrak C.$
\end{corollary}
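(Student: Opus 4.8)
The plan is to deduce Corollary \ref{heart} by combining the two preceding results, verifying that their hypotheses are met and then reading off the conclusion. The statement asserts that the heart $\mathfrak C$ can be recovered from the Bridgeland heart $\mathcal A^{\alpha,\beta}(X)$ by a single tilt with respect to a torsion pair coming from $\lambda_{\alpha,\beta,s}$-slopes. The natural strategy is: first fix a point $(\beta,\alpha)\in D$; then invoke Lemma \ref{two properties} to produce the data $s>\frac 16$ and $\phi_1$; and finally apply Proposition \ref{tilted heart} with $\mathcal C=\mathfrak C$ to obtain the desired $\gamma$.

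The first step is to check that $\mathfrak C$ satisfies the hypotheses of Proposition \ref{tilted heart}. By construction $\mathfrak C$ is the heart of a bounded t-structure on $\mathrm D^b(X)$ (this was established from the fact that $(\mathcal O_X(-1),\mathcal Q(-1),\mathcal U,\mathcal O_X)$ is a full strong exceptional collection). Moreover, Lemma \ref{two properties} says precisely that for $(\beta,\alpha)\in D$ the heart $\mathfrak C$ satisfies properties (i) and (ii): part (i) yields some $s>\frac 16$ and $\phi_1\in\mathbb R$ with $Z_{\alpha,\beta,s}(\mathfrak C)\subset\{re^{\pi\phi i}\colon r\ge 0,\ \phi_1-1<\phi\le\phi_1\}$, and part (ii) gives $\mathfrak C\subset\langle\mathcal A^{\alpha,\beta}(X),\mathcal A^{\alpha,\beta}(X)[1]\rangle$. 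Hence $\mathfrak C$ is an admissible input for Proposition \ref{tilted heart}.

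The second step is simply to apply Proposition \ref{tilted heart} to $\mathcal C=\mathfrak C$. That proposition produces a real number $\gamma$ such that $\langle\mathcal T''_{\gamma},\mathcal F''_{\gamma}[1]\rangle=\mathfrak C$, where $\mathcal T''_{\gamma}$ and $\mathcal F''_{\gamma}$ are the torsion pair on $\mathcal A^{\alpha,\beta}(X)$ defined in (\ref{T'' F''}). Tracing through the proof of Proposition \ref{tilted heart}, the value used is $\gamma=-\cot\pi\phi_1$ for the $\phi_1$ furnished by Lemma \ref{two properties}(i). Thus the same $s>\frac 16$ coming from Lemma \ref{two properties}(i) and this $\gamma$ are exactly the data claimed in the corollary.

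Since this is purely a matter of assembling the two earlier results, there is no serious obstacle: the only point requiring care is to confirm that the $s$ appearing in the definition of the torsion pair $(\mathcal T''_{\gamma},\mathcal F''_{\gamma})$ (and hence of $\lambda_{\alpha,\beta,s}$) is the same $s>\frac 16$ produced by Lemma \ref{two properties}(i), so that the central charge $Z_{\alpha,\beta,s}$ used to verify the half-plane condition and the slope function $\lambda_{\alpha,\beta,s}$ defining the tilt are compatible. With that identification the corollary follows immediately.

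\begin{proof} Fix $(\beta,\alpha)\in D$. By Lemma \ref{two properties}(i) there exist $s>\frac 16$ and $\phi_1\in\mathbb R$ such that $Z_{\alpha,\beta,s}(\mathfrak C)$ lies in the half-plane $\{re^{\pi\phi i}\colon r\ge 0,\ \phi_1-1<\phi\le\phi_1\}$, and by Lemma \ref{two properties}(ii) we have $\mathfrak C\subset\langle\mathcal A^{\alpha,\beta}(X),\mathcal A^{\alpha,\beta}(X)[1]\rangle$. As $\mathfrak C$ is the heart of a bounded t-structure, it satisfies the hypotheses of Proposition \ref{tilted heart} with $\mathcal C=\mathfrak C$. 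Applying that proposition (which uses the torsion pair (\ref{T'' F''}) defined via the same $s$) yields $\gamma=-\cot\pi\phi_1\in\mathbb R$ with $\langle\mathcal T''_{\gamma},\mathcal F''_{\gamma}[1]\rangle=\mathfrak C$, as required. \end{proof}
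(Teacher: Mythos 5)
Your proof is correct and follows exactly the route the paper intends: the paper gives no separate argument for the corollary, stating only that Lemma \ref{two properties} and Proposition \ref{tilted heart} together imply it, which is precisely the assembly you carry out (including the identification $\gamma=-\cot\pi\phi_1$ and the compatibility of the $s$ used in both statements).
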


\begin{remark}\label{complexes}
Any object $E\in\mathfrak C$ is isomorphic to a complex of the form
$$\mathcal O_{X}(-1)^{\oplus a}\to\mathcal Q(-1)^{\oplus b}\to\mathcal U^{\oplus c}\to\mathcal O_{X}^{\oplus d},$$
concentrated in degrees $-3,-2,-1$ and $0$. The numbers $a,b,c,d$ are uniquely determined by the Chern character of $E$.
\end{remark}

Also, for further reference we include the following result (the idea is from \cite[Section 4]{EHV}).

\begin{lemma}\label{reflexive}Let $E$ be a reflexive sheaf on a smooth projective threefold $X$. Then $\mathrm{ch}_i(E^\vee)=\mathrm{ch}_i(E)$ for $i\le 2$, and $\mathrm{ch}_3(E)+\mathrm{ch}_3(E^\vee)=h^0(\mathcal Ext^1(E,\mathcal O_X))\ge 0$.
\end{lemma}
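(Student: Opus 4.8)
The plan is to compare $E$ with its ordinary dual $E^\vee=\lHom(E,\mathcal O_X)$ by passing through the derived dual $\RlHom(E,\mathcal O_X)$ and controlling the discrepancy by the local $\lExt$-sheaves. Since $X$ is smooth, $E$ admits a finite locally free resolution $P^\bullet\xrightarrow{\sim} E$; dualizing term by term produces a bounded complex $(P^\bullet)^\vee$ representing $\RlHom(E,\mathcal O_X)$, whose cohomology sheaves are the $\mathcal H^i=\lExt^i(E,\mathcal O_X)$. On Chern characters, negating the Chern roots of each locally free $P^i$ gives $\mathrm{ch}(\RlHom(E,\mathcal O_X))=\mathrm{ch}(E)^{\vee}$, where $\alpha\mapsto\alpha^{\vee}$ multiplies the component in $H^{2i}$ by $(-1)^i$; no Grothendieck--Riemann--Roch is needed, only additivity of $\mathrm{ch}$ on the Grothendieck group. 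Everything then reduces to identifying the sheaves $\lExt^i(E,\mathcal O_X)$.

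The decisive step, where reflexivity really enters, is the claim that $\lExt^i(E,\mathcal O_X)=0$ for $i\ge 2$, while $\lExt^1(E,\mathcal O_X)$ is supported in dimension $0$. I expect this to be the main obstacle. It is a local statement: at $x\in X$ the stalk $E_x$ is a reflexive module over the regular local ring $\mathcal O_{X,x}$. If $\dim\mathcal O_{X,x}\le 2$, a reflexive module over a regular local ring is free, so $\lExt^{\ge 1}(E,\mathcal O_X)_x=0$. If $\dim\mathcal O_{X,x}=3$ (a closed point), the $S_2$-property forces $\operatorname{depth}E_x\ge 2$, and the Auslander--Buchsbaum formula gives $\operatorname{pd}E_x\le 1$, hence $\lExt^{\ge 2}(E,\mathcal O_X)_x=0$. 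Thus $\lExt^{\ge 2}(E,\mathcal O_X)=0$ everywhere and $\lExt^1(E,\mathcal O_X)$ can be nonzero only at finitely many closed points, i.e.\ it is a sheaf of finite length (this is the input drawn from \cite[Section 4]{EHV}).

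Granting this, $\RlHom(E,\mathcal O_X)$ has exactly two nonzero cohomology sheaves, $\mathcal H^0=E^\vee$ and $\mathcal H^1=\lExt^1(E,\mathcal O_X)$, so on Chern characters $\mathrm{ch}(E)^{\vee}=\mathrm{ch}(E^\vee)-\mathrm{ch}(\lExt^1(E,\mathcal O_X))$. For a zero-dimensional sheaf $T$ one has $\mathrm{ch}_0(T)=\mathrm{ch}_1(T)=\mathrm{ch}_2(T)=0$ and, since its higher cohomology vanishes, $\mathrm{ch}_3(T)=\lt(T)=h^0(T)\ge 0$. Comparing the two sides degree by degree: in degrees $\le 2$ the correction term contributes nothing, so $\mathrm{ch}_{\le 2}(E^\vee)$ is read off directly from $\mathrm{ch}(E)^{\vee}$, yielding the asserted identities for $i\le 2$; in degree $3$ we obtain $-\mathrm{ch}_3(E)=\mathrm{ch}_3(E^\vee)-h^0(\lExt^1(E,\mathcal O_X))$, that is $\mathrm{ch}_3(E)+\mathrm{ch}_3(E^\vee)=h^0(\lExt^1(E,\mathcal O_X))\ge 0$, as required.
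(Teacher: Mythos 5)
Your argument is correct and takes essentially the same route as the paper: both rest on the fact that a reflexive sheaf on a smooth threefold has projective dimension $\le 1$ (depth $\ge 2$ plus Auslander--Buchsbaum), so that the only correction to the naive duality of Chern characters comes from the zero-dimensional sheaf $\lExt^1(E,\mathcal O_X)$, whose Chern character is $h^0$ in degree $3$ and vanishes below. The differences are cosmetic --- you package the computation via $\RlHom$ and supply the local-algebra input directly where the paper cites Hartshorne's Proposition 1.3 and Corollary 1.4 and writes out the global two-term resolution --- and note that in degrees $\le 2$ both proofs actually yield $\mathrm{ch}_i(E^\vee)=(-1)^i\mathrm{ch}_i(E)$, which is how the lemma is used later in the paper.
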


\textit{Proof.} The result \cite[Proposition 1.3]{H} of Hartshorne together with Auslander-Buchsbaum formula implies that the projective dimension of $E$ at any point of $X$ is smaller than or equal to 1. Therefore, we can construct a resolution
$$0\to\mathcal E_1\to\mathcal E_0\to E\to 0$$
with locally free sheaves $\mathcal E_i$. Dualizing, we obtain an exact sequence
$$0\to E^\vee\to\mathcal E_0^\vee\to\mathcal E_1^\vee\to\mathcal Ext^1(E,\mathcal O_X)\to 0.$$

By \cite[Corollary 1.4]{H} the sheaf $E$ is locally free on the complement of a closed set of dimension $\le 0$, hence $\dim\mathrm{supp}(\mathcal Ext^1(E,\mathcal O_X))\le 0$. Such a sheaf can be obtained from skyscraper sheaves by extensions, and Grothendieck-Riemann-Roch Theorem implies, that $\mathrm{ch}_i(\mathcal Ext^1(E,\mathcal O_X))=0$ for $i\le 2$, and $\mathrm{ch}_3(\mathcal Ext^1(E,\mathcal O_X))=h^0(\mathcal Ext^1(E,\mathcal O_X))$. Using additivity of Chern character in exact sequences and the property $\mathrm{ch}_i(\mathcal E^\vee)=(-1)^i\mathrm{ch}_i(\mathcal E)$ for a locally free sheaf $\mathcal E$, we obtain the statement of the lemma. $\square$

\section{Objects with Maximal Third Chern Character}
\label{bounds}
We first deal with the twisted trivial sheaves and their shifts.

\begin{proposition}\label{trivial} If $E\in\mathrm{D}^b(X)$ is a tilt-semistable or Bridgeland semistable object for some $\alpha>0,\beta\in\mathbb R$ with $\mathrm{ch}(E)=\mathrm{ch}(\mathcal O_X(n)^{\oplus m})$, then $E\cong\mathcal O_X(n)^{\oplus m}$. Moreover, if $E$ is tilt-semistable and $\mathrm{ch}_{\le 2}(E)=\mathrm{ch}_{\le 2}(\mathcal O_X(n)^{\oplus m})$, then $\mathrm{ch}_3(E)\le\mathrm{ch}_3(\mathcal O_X(n)^{\oplus m})$.
\end{proposition}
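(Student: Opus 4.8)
The plan is to reduce to the untwisted case and then exploit that the class of $\mathcal O_X(n)^{\oplus m}$ is isotropic for the relevant quadratic forms. Tensoring with $\mathcal O_X(-n)$ sends $\mathrm{Coh}^\beta(X)$ to $\mathrm{Coh}^{\beta-n}(X)$ and $\mathcal A^{\alpha,\beta}(X)$ to $\mathcal A^{\alpha,\beta-n}(X)$, carrying $\nu_{\alpha,\beta}$- and $\lambda_{\alpha,\beta,s}$-stability to $\nu_{\alpha,\beta-n}$- and $\lambda_{\alpha,\beta-n,s}$-stability, and it preserves the hypotheses on the Chern character (the difference $\mathrm{ch}_3(E)-\mathrm{ch}_3(\mathcal O_X(n)^{\oplus m})$ is twist-invariant once $\mathrm{ch}_{\le 2}$ agree); so I may assume $n=0$, i.e. $\mathrm{ch}(E)=(m,0,0,0)$. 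A direct computation gives $\overline{\Delta}_H(E)=0$ and, using the formulas of the preceding lemma, $Q_{\alpha,\beta}(\mathcal O_X^{\oplus m})=0$. The overall strategy is to decompose $E$ into stable factors, use the vanishing of the discriminant to force each factor to be $\mathcal O_X$ or an ideal sheaf of points, and finally split the resulting iterated extension via $\mathrm{Ext}^1(\mathcal O_X,\mathcal O_X)=H^1(X,\mathcal O_X)=0$ (which holds since $X$ is Fano).

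First I would treat the tilt-semistable case, which also yields the ``moreover'' part. Since $\overline{\Delta}_H(E)=0$, Proposition~\ref{actual}(2) shows $E$ has no semicircular walls, so by Remark~\ref{line bundles} it is $\nu_{\alpha,\beta}$-semistable for every $(\beta,\alpha)$ with $E\in\mathrm{Coh}^\beta(X)$. Choosing $\beta_0\in(-1,0)$ and $\alpha_0\gg 0$ and applying Lemma~\ref{minimal}, cases (b) and (c) are excluded because they force $\mathrm{ch}_0(E)\le 0$, whereas $\mathrm{ch}_0(E)=m>0$; hence case (a) holds and $E$ is a $\mu$-semistable torsion-free sheaf with $c_1(E)=0$. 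Let $G_i$ be its $\mu$-Jordan--Hölder factors, each $\mu$-stable of slope $0$, so $c_1(G_i)=0$ and $\overline{\Delta}_H(G_i)=-2(H^3\cdot\mathrm{ch}_0(G_i))(H\cdot\mathrm{ch}_2(G_i))\ge 0$. Writing $\mathrm{ch}_2(G_i)=d_iH^2$ this reads $d_i\le 0$, while additivity together with $\mathrm{ch}_2(E)=0$ gives $\sum_i d_i=0$; hence every $d_i=0$, so $\mathrm{ch}_{\le 2}(G_i)=(\mathrm{ch}_0(G_i),0,0)$ and $\overline{\Delta}_H(G_i)=0$.

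Now each $G_i$ is $\mu$-stable, hence $2$-stable, hence $\nu_{\alpha,\beta}$-stable for $\alpha\gg 0$ by Proposition~\ref{2-stability}; since $\overline{\Delta}_H(G_i)=0<\mathrm{ch}_0(G_i)^2$ and $\beta_-(G_i)=\beta_+(G_i)=0\in[0,1)$, Theorem~\ref{rank bound} forces $G_i$ to be a line bundle or a twisted ideal sheaf of points. As a genuine sheaf with $c_1=0$ this means $G_i\cong\mathcal O_X$ or $G_i\cong I_{Z_i}$ with $Z_i$ finite. Since $\mathrm{ch}_3(\mathcal O_X)=0$ and $\mathrm{ch}_3(I_{Z_i})=-\tfrac{1}{5}\mathrm{length}(Z_i)\,H^3\le 0$, additivity gives $\mathrm{ch}_3(E)=\sum_i\mathrm{ch}_3(G_i)\le 0=\mathrm{ch}_3(\mathcal O_X^{\oplus m})$, which proves the ``moreover'' statement (this used only $\mathrm{ch}_{\le 2}(E)$). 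If moreover $\mathrm{ch}_3(E)=0$, then every $\mathrm{ch}_3(G_i)=0$, so each $G_i\cong\mathcal O_X$ and $E$ is an iterated extension of $m$ copies of $\mathcal O_X$; as $\mathrm{Ext}^1(\mathcal O_X,\mathcal O_X)=0$ the extension splits and $E\cong\mathcal O_X^{\oplus m}$.

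For a Bridgeland-semistable $E$ I would run the same scheme with the $\lambda_{\alpha,\beta,s}$-Jordan--Hölder filtration in $\mathcal A^{\alpha,\beta}(X)$, using the support property of the Bridgeland condition with respect to the quadratic form $Q_{\alpha,\beta}$ of Proposition~\ref{BMT}: since $Q_{\alpha,\beta}(\mathcal O_X^{\oplus m})=0$, the factors are forced to have numerical class proportional to $(1,0,0,0)$, in particular $\mathrm{ch}_3=0$; identifying each such factor with $\mathcal O_X$ and splitting again via $\mathrm{Ext}^1(\mathcal O_X,\mathcal O_X)=0$ finishes. \textbf{The main obstacle} is precisely this last identification in the Bridgeland case: unlike the tilt case, where Theorem~\ref{rank bound} classifies the zero-discriminant stable objects outright, here one must rule out that a $\lambda_{\alpha,\beta,s}$-stable object of class $(1,0,0,0)$ inside $\mathcal A^{\alpha,\beta}(X)$ could be a shift or an ideal sheaf of points rather than $\mathcal O_X$. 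I expect to resolve this by relating the two stabilities through Lemma~\ref{nu-lambda} and the complete absence of walls for zero-discriminant classes, thereby reducing the Bridgeland factors to the tilt-stable objects already classified above.
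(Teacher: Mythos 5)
Your treatment of the tilt-semistable case (and hence of the ``moreover'' bound) is correct, and it takes a genuinely different route from the paper. The paper reduces to $n=0$, works at the specific point $(\beta,\alpha)=(-\tfrac23,\tfrac14)\in D$, and invokes Corollary~\ref{heart} together with Remark~\ref{complexes}: either $E$ or $E[1]$ lies in the heart $\mathfrak C$ generated by the exceptional collection, and the Chern character forces the complex $\mathcal O_X(-1)^{\oplus a}\to\mathcal Q(-1)^{\oplus b}\to\mathcal U^{\oplus c}\to\mathcal O_X^{\oplus d}$ to degenerate to $\mathcal O_X^{\oplus m}$; for the ``moreover'' part it simply notes $\beta<0$ and reads off $\mathrm{ch}_3(E)\le 0$ from $Q_{\alpha,\beta}(E)\ge 0$ (Proposition~\ref{BMT}). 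Your alternative --- absence of semicircular walls for $\overline{\Delta}_H=0$, Lemma~\ref{minimal} at $\alpha\gg0$ to land on a $\mu$-semistable torsion-free sheaf, slope Jordan--H\"older factors pinned down by Bogomolov--Gieseker and Theorem~\ref{rank bound}, then splitting via $\Ext^1(\mathcal O_X,\mathcal O_X)=0$ --- is more hands-on and avoids the exceptional collection entirely; the only points to make explicit are that the slope JH filtration with $\mu$-stable \emph{saturated} (hence torsion-free) factors exists, and that each factor inherits tilt-stability for $\alpha\gg0$ via Proposition~\ref{2-stability} so that Proposition~\ref{BG} and Theorem~\ref{rank bound} apply. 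Your $Q_{\alpha,\beta}$-argument for the bound is in fact exactly the paper's, just arrived at later.

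The genuine gap is the Bridgeland case, which you yourself flag. Two specific problems: (a) the support property you want to invoke is only stated in the paper for tilt-stability with respect to $Q^{\mathrm{tilt}}$; Proposition~\ref{BMT} asserts $Q_{\alpha,\beta}\ge 0$ for \emph{tilt}-semistable objects, and the paper nowhere establishes that the Bridgeland conditions $(\mathcal A^{\alpha,\beta}(X),Z_{\alpha,\beta,s})$ satisfy the support property with respect to $Q_{\alpha,\beta}$ itself (in the references the relevant form is a conical combination involving $\overline\Delta_H$), so ``the JH factors have class proportional to $(1,0,0,0)$'' is not available as stated; and (b) even granting proportionality, you still must identify a $\lambda_{\alpha,\beta,s}$-stable object of class $(m_i,0,0,0)$ with $\mathcal O_X^{\oplus m_i}$, and your proposed bridge, Lemma~\ref{nu-lambda}, points the wrong way: it produces Bridgeland-semistable objects from tilt-semistable ones, not conversely, so it cannot reduce the Bridgeland factors to the tilt-stable objects you classified. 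The paper's mechanism for exactly this step is Corollary~\ref{heart} plus Remark~\ref{complexes}: at a suitable $(\beta,\alpha)\in D$ the heart $\mathcal A^{\alpha,\beta}(X)$ is a tilt of $\mathfrak C$, so any $\lambda_{\alpha,\beta,s}$-semistable object or its shift lies in $\mathfrak C$, and a short computation with the Chern characters of the four generators shows that class $(m,0,0,0)$ forces $E\cong\mathcal O_X^{\oplus m}$ (the shifted alternative is excluded because it would require a negative multiplicity of $\mathcal O_X$). You should replace your sketched Bridgeland argument by this one, or by the wall-crossing reduction of \cite{Sch19} that the paper cites.
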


\textit{Proof.}
We follow the lines of the proof of \cite[Proposition 4.1]{Sch19}. Tensoring $E$ with $\mathcal O_X(-n)$ we reduce to the case $n=0$. We prove the statement for objects which are Bridgeland semistable for $\alpha=\frac 14,\beta=-\frac 23$ and $s>\frac 16$ from Corollary \ref{heart}. By this Corollary either $E$ or $E[1]$ belongs to $\mathfrak C$. The Chern character assumption implies that $E\in\mathfrak C$ and $E\cong\mathcal O_X^{\oplus m}$. The rest of the proof of the first statement repeats the arguments in \cite[Proposition 4.1]{Sch19} verbatim.

The second statement is can analogously be reduced to the case $n=0$. Then the fact that $E\in\mathrm{Coh}^\beta(X)$ implies that $\beta<0$, and then the generalized Bogomolov--Gieseker inequality (Proposition \ref{BMT}) implies that $\mathrm{ch}_3(E)\le 0$. $\square$

The following statement may be used for dealing with semistable objects of negative rank.

\begin{proposition}[{\cite[Proposition 5.1.3]{BMT}}]\label{duality} For any $\nu_{\alpha,\beta}$-semistable object $E\in\mathrm{Coh}^\beta(X)$ with $\nu_{\alpha,\beta}(E)\neq +\infty$ there exists a $\nu_{\alpha,-\beta}$-semistable object $\widetilde{E}\in\mathrm{Coh}^{-\beta}(X)$ and a sheaf $T$ supported in dimension zero together with a distinguished triangle
\begin{equation}\label{triangle}\widetilde{E}\to\mathbf{R}\mathcal{H}om(E,\mathcal O_X)[1]\to T[-1]\to\widetilde{E}[1].\end{equation}
\end{proposition}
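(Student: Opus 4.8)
The plan is to realise $\widetilde{E}$ as the ``two-term part'' of the derived dual of $E$. Write $\mathbb{D}(-)=\mathbf{R}\mathcal{H}om(-,\mathcal{O}_X)$; since $X$ is smooth and $\mathcal{O}_X$ is a line bundle, $\mathbb{D}$ is a contravariant autoequivalence of $\mathrm{D}^b(X)$ with $\mathbb{D}^2\cong\mathrm{id}$ and $\mathbb{D}(-[1])=\mathbb{D}(-)[-1]$. The first step is to record the numerical effect of the normalised functor $F:=\mathbb{D}(E)[1]$. From $\mathrm{ch}_i(\mathbb{D}(E))=(-1)^i\mathrm{ch}_i(E)$ one gets $\mathrm{ch}_i(F)=(-1)^{i+1}\mathrm{ch}_i(E)$, and substituting into the formulas for the twisted character yields $\mathrm{ch}_k^{-\beta}(F)=(-1)^{k+1}\mathrm{ch}_k^\beta(E)$ for all $k$. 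In particular $H^2\cdot\mathrm{ch}_1^{-\beta}(F)=H^2\cdot\mathrm{ch}_1^\beta(E)$ and $\Re Z^{\mathrm{tilt}}_{\alpha,-\beta}(F)=-\Re Z^{\mathrm{tilt}}_{\alpha,\beta}(E)$, so that $\nu_{\alpha,-\beta}(F)=-\nu_{\alpha,\beta}(E)$. The hypothesis $\nu_{\alpha,\beta}(E)\neq+\infty$ means exactly $H^2\cdot\mathrm{ch}_1^\beta(E)\neq 0$, giving $\Im Z^{\mathrm{tilt}}_{\alpha,-\beta}(F)>0$, which is the positivity expected of an object of $\mathrm{Coh}^{-\beta}(X)$.

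Next I would analyse the cohomology sheaves of $F$. Put $A:=\mathcal{H}^{-1}(E)\in\mathcal{F}_\beta$ (torsion-free) and $B:=\mathcal{H}^0(E)\in\mathcal{T}_\beta$, and feed them into the local-to-global spectral sequence $\mathcal{E}xt^p(\mathcal{H}^{-q}(E),\mathcal{O}_X)\Rightarrow\mathcal{H}^{p+q}(\mathbb{D}(E))$. Shifting by $[1]$, one reads off $\mathcal{H}^{-1}(F)=B^\vee$, that $\mathcal{H}^0(F)$ is built from $A^\vee$ and $\mathcal{E}xt^1(B,\mathcal{O}_X)$, and that $\mathcal{H}^1(F)$ and $\mathcal{H}^2(F)$ are assembled from $\mathcal{E}xt^{\ge 1}(A,\mathcal{O}_X)$ and $\mathcal{E}xt^{\ge 2}(B,\mathcal{O}_X)$. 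Here tilt-semistability enters: one shows, as is standard for tilt-semistable objects (in the spirit of Lemmas \ref{minimal} and \ref{reflexive}), that $A=\mathcal{H}^{-1}(E)$ is reflexive and that $B=\mathcal{H}^0(E)$ is pure. Reflexivity of $A$ forces $\mathcal{E}xt^{\ge 2}(A,\mathcal{O}_X)=0$ and makes $\mathcal{E}xt^1(A,\mathcal{O}_X)$ supported in dimension zero; purity of $B$, via the criterion that a pure sheaf $\mathcal{F}$ on the smooth threefold satisfies $\dim\mathcal{E}xt^q(\mathcal{F},\mathcal{O}_X)<3-q$ for every $q$ exceeding the codimension of $\mathcal{F}$, makes $\mathcal{E}xt^2(B,\mathcal{O}_X)$ supported in dimension zero and $\mathcal{E}xt^3(B,\mathcal{O}_X)=0$. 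Combining these collapses gives $\mathcal{H}^i(F)=0$ for $i\ge 2$ and shows that $T:=\mathcal{H}^1(F)$ is a sheaf supported in dimension zero.

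I would then set $\widetilde{E}:=\tau^{\le 0}F$ and take the truncation triangle $\widetilde{E}\to F\to\tau^{\ge 1}F=T[-1]\to\widetilde{E}[1]$, which is precisely (\ref{triangle}). It remains to verify that $\widetilde{E}\in\mathrm{Coh}^{-\beta}(X)$ and that it is $\nu_{\alpha,-\beta}$-semistable. Membership in the heart reduces to $\mathcal{H}^{-1}(\widetilde{E})=B^\vee\in\mathcal{F}_{-\beta}$ and $\mathcal{H}^0(\widetilde{E})\in\mathcal{T}_{-\beta}$, and both slope bounds follow from the numerical relations of the first paragraph together with the slope control on the sheaves $A,B$ coming from the semistability of $E$. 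For semistability of $\widetilde{E}$ I would transport destabilisers through the slope-reversing anti-equivalence $G(-):=\mathbb{D}(-)[1]$, which satisfies $\nu_{\alpha,-\beta}(G(-))=-\nu_{\alpha,\beta}(-)$ and $G(F)=E$ by $\mathbb{D}^2\cong\mathrm{id}$. A subobject $P\hookrightarrow\widetilde{E}$ in $\mathrm{Coh}^{-\beta}(X)$ with $\nu_{\alpha,-\beta}(P)>\nu_{\alpha,-\beta}(\widetilde{E})$ would, after applying $G$ and discarding the contributions supported in dimension zero (which have vanishing tilt central charge and hence do not affect slopes), produce a quotient of $E$ of strictly smaller slope, contradicting the $\nu_{\alpha,\beta}$-semistability of $E$.

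The main obstacle is the cohomological bookkeeping of the second paragraph: proving that the derived dual of a tilt-semistable object differs from an object of $\mathrm{Coh}^{-\beta}(X)$ only by a sheaf in dimension zero. Concretely, the crux is to convert tilt-semistability into reflexivity of $\mathcal{H}^{-1}(E)$ and purity of $\mathcal{H}^{0}(E)$, since these are exactly the inputs that make all the higher local $\mathcal{E}xt$-sheaves feeding $\mathcal{H}^{\ge 1}(F)$ collapse to dimension zero and kill $\mathcal{H}^2(F)$. Once this collapse is established, the construction of $\widetilde{E}$ and $T$ and the verification of $\nu_{\alpha,-\beta}$-semistability are formal consequences of $\mathbb{D}^2\cong\mathrm{id}$ and the slope-reversal identity $\nu_{\alpha,-\beta}(\mathbb{D}(-)[1])=-\nu_{\alpha,\beta}(-)$.
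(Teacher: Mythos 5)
The paper does not actually prove this statement: Proposition \ref{duality} is imported verbatim from \cite[Proposition 5.1.3]{BMT}, so your attempt must be judged on its own and against the argument there. Your skeleton (derived dual, truncation triangle, transporting destabilizers through $\mathbf{R}\mathcal Hom(-,\mathcal O_X)[1]$) is the right one, but the cohomological core of your second paragraph has a genuine gap. The reflexivity of $A=\mathcal{H}^{-1}(E)$ is indeed forced by semistability (the torsion sheaf $A^{\vee\vee}/A$ has dimension $\le 1$ and would be a subobject of $A[1]\subset E$ in $\mathrm{Coh}^\beta(X)$ of infinite tilt-slope), but the claim that $B=\mathcal{H}^0(E)$ is pure, and the ensuing vanishing statements for $\mathcal Ext^2(B,\mathcal O_X)$ and $\mathcal Ext^3(B,\mathcal O_X)$, are false. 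Semistability only gives $\Hom(T,E)=0$ for sheaves $T$ of dimension $\le 1$; when $A\neq 0$ a subsheaf $T\subset B$ need not lift to a subobject of $E$ (the obstruction lives in $\Ext^2(T,A)$), so $B$ may have zero-dimensional torsion or be zero-dimensional outright. The proposition itself manufactures such examples: for a tilt-semistable reflexive sheaf $G$ that is not locally free, $\widetilde G=\mathbf{R}\mathcal Hom(G,\mathcal O_X)[1]$ is tilt-semistable of finite slope with $\mathcal{H}^0(\widetilde G)=\mathcal Ext^1(G,\mathcal O_X)$ a nonzero zero-dimensional sheaf, whence $\mathcal Ext^3(\mathcal{H}^0(\widetilde G),\mathcal O_X)\neq 0$. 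Worse, in the very common case $\dim B=1$ (allowed by Lemma \ref{minimal}(b),(c)) the sheaf $\mathcal Ext^2(B,\mathcal O_X)$ is one-dimensional even for pure $B$, so your termwise reading of the spectral sequence would leave a one-dimensional piece in $T=\mathcal{H}^1(F)$ and a nonzero $\mathcal{H}^2(F)$.

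The point is that the spectral sequence does not degenerate, and semistability must be fed in through the differentials or, more cleanly, a posteriori by duality. Two repairs are needed. First, $\mathcal{H}^2(F)$ is automatically supported in dimension zero once $A$ is torsion-free (its $E_2$-inputs are $\mathcal Ext^2(A,\mathcal O_X)$ and $\mathcal Ext^3(B,\mathcal O_X)$, both zero-dimensional); if it were nonzero, applying $\mathbf{R}\mathcal Hom(-,\mathcal O_X)$ to the truncation $F\to\mathcal{H}^2(F)[-2]$ and using $\mathbb D^2\cong\id$ produces a nonzero morphism from the zero-dimensional sheaf $\mathcal Ext^3(\mathcal{H}^2(F),\mathcal O_X)$ to $E$, contradicting $\Hom(T,E)=0$. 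Second, one must show that the one-dimensional part of $\mathcal Ext^2(B,\mathcal O_X)$ is killed by the differential $d_2\colon A^\vee\to\mathcal Ext^2(B,\mathcal O_X)$ up to dimension zero, which again requires a separate semistability argument rather than purity of $B$. Your first paragraph (the numerical identity $\nu_{\alpha,-\beta}(\mathbb D(-)[1])=-\nu_{\alpha,\beta}(-)$) and the final transport of destabilizers are essentially fine modulo the usual care with strict versus non-strict slope inequalities at $\mu=\beta$ (ruled out by the hypothesis $\nu_{\alpha,\beta}(E)\neq+\infty$ together with semistability), but as written the middle of the argument does not establish that $\mathbb D(E)[1]$ lies in $\langle\mathrm{Coh}^{-\beta}(X),\mathrm{Coh}_0(X)[-1]\rangle$.
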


\begin{corollary}\label{-}If $E\in\mathrm{Coh}^\beta(X)$ is an object with $\mathrm{ch}_{\le 2}(E)=-\mathrm{ch}_{\le 2}(\mathcal O_X(n)^{\oplus m})$, which is tilt-semistable for some $\beta>n,\alpha>0$, then $\mathrm{ch}_3(E)\le\mathrm{ch}_3(\mathcal O_X(n)^{\oplus m}[1])$, and in case of equality we have $E\cong\mathcal O_X(n)^{\oplus m}[1]$.
\end{corollary}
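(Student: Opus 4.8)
The plan is to derive this corollary from the duality Proposition \ref{duality}, reducing the negative-rank statement to the positive-rank statement already established in Proposition \ref{trivial}. First I would record the numerics: writing $v:=\mathrm{ch}_{\le 2}(\mathcal O_X(n)^{\oplus m})=(m,mn,\tfrac{mn^2}{2})$, we have $\mathrm{ch}_{\le 2}(E)=-v$, so $\mathrm{ch}_0(E)=-m$ and $\mathrm{ch}_1^\beta(E)=(\mathrm{ch}_1(E)-\beta\,\mathrm{ch}_0(E))H=m(\beta-n)H$. Since $\beta>n$ and $m\ge 1$, this gives $H^2\cdot\mathrm{ch}_1^\beta(E)=5m(\beta-n)>0$, hence $\nu_{\alpha,\beta}(E)\neq+\infty$ and Proposition \ref{duality} applies. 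Thus there is a $\nu_{\alpha,-\beta}$-semistable object $\widetilde E\in\mathrm{Coh}^{-\beta}(X)$ and a $0$-dimensional sheaf $T$ fitting into the triangle $\widetilde E\to\mathbf{R}\mathcal{H}om(E,\mathcal O_X)[1]\to T[-1]\to\widetilde E[1]$.

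Next I would track Chern characters through this triangle. Using $\mathrm{ch}_i(\mathbf{R}\mathcal{H}om(E,\mathcal O_X))=(-1)^i\mathrm{ch}_i(E)$, the shift by $[1]$, and the fact that $T$ contributes only in degree $3$ (with $\mathrm{ch}_3(T[-1])=-\ell$ for $\ell:=\mathrm{length}(T)\ge 0$), one finds $\mathrm{ch}_{\le 2}(\widetilde E)=(m,-mn,\tfrac{mn^2}{2})=\mathrm{ch}_{\le 2}(\mathcal O_X(-n)^{\oplus m})$ and $\mathrm{ch}_3(\widetilde E)=\mathrm{ch}_3(E)+\ell$. Since $-\beta<-n$, the object $\widetilde E$ is a tilt-semistable object whose $\mathrm{ch}_{\le 2}$ matches that of the twisted trivial sheaf $\mathcal O_X(-n)^{\oplus m}$, so the second statement of Proposition \ref{trivial} (applied with twist $-n$) yields $\mathrm{ch}_3(\widetilde E)\le\mathrm{ch}_3(\mathcal O_X(-n)^{\oplus m})=-\tfrac{mn^3}{6}$. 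Combining, $\mathrm{ch}_3(E)=\mathrm{ch}_3(\widetilde E)-\ell\le-\tfrac{mn^3}{6}=\mathrm{ch}_3(\mathcal O_X(n)^{\oplus m}[1])$, which is the desired bound.

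For the equality case I would argue as follows. If $\mathrm{ch}_3(E)=-\tfrac{mn^3}{6}$, then both inequalities above must be equalities, forcing $\ell=0$ (hence $T=0$) and $\mathrm{ch}_3(\widetilde E)=-\tfrac{mn^3}{6}=\mathrm{ch}_3(\mathcal O_X(-n)^{\oplus m})$. Then the full Chern character of $\widetilde E$ agrees with that of $\mathcal O_X(-n)^{\oplus m}$, so the first statement of Proposition \ref{trivial} gives $\widetilde E\cong\mathcal O_X(-n)^{\oplus m}$. Since $T=0$, the triangle collapses to an isomorphism $\mathbf{R}\mathcal{H}om(E,\mathcal O_X)[1]\cong\widetilde E\cong\mathcal O_X(-n)^{\oplus m}$, i.e. $\mathbf{R}\mathcal{H}om(E,\mathcal O_X)\cong\mathcal O_X(-n)^{\oplus m}[-1]$. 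Applying the biduality isomorphism $E\cong\mathbf{R}\mathcal{H}om(\mathbf{R}\mathcal{H}om(E,\mathcal O_X),\mathcal O_X)$ (valid on $\mathrm D^b(X)$ for $X$ smooth) together with $\mathbf{R}\mathcal{H}om(\mathcal O_X(-n)^{\oplus m}[-1],\mathcal O_X)=\mathcal O_X(n)^{\oplus m}[1]$ then yields $E\cong\mathcal O_X(n)^{\oplus m}[1]$.

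I expect the only genuinely delicate points to be bookkeeping rather than conceptual. The main thing to get right is the sign and shift conventions in passing from $E$ to $\widetilde E$ through Proposition \ref{duality}, in particular confirming that the $0$-dimensional correction term $T$ can only \emph{decrease} $\mathrm{ch}_3(E)$ relative to $\mathrm{ch}_3(\widetilde E)$ and that the hypothesis $\beta>n$ is exactly what places $-\beta$ on the correct side of $-n$ for Proposition \ref{trivial} to apply and what guarantees $\nu_{\alpha,\beta}(E)\neq+\infty$. The second, cleaner subtlety is the equality case, where one must invoke the stronger (isomorphism) conclusion of Proposition \ref{trivial} and the involutivity of $\mathbf{R}\mathcal{H}om(-,\mathcal O_X)$ to transport the identification of $\widetilde E$ back to $E$ itself.
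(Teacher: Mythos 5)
Your proposal is correct and follows essentially the same route as the paper: apply Proposition \ref{duality} to obtain $\widetilde E$ with $\mathrm{ch}_{\le 2}(\widetilde E)=\mathrm{ch}_{\le 2}(\mathcal O_X(-n)^{\oplus m})$, use the triangle to get $\mathrm{ch}_3(E)=\mathrm{ch}_3(\widetilde E)-\mathrm{ch}_3(T)$, bound via Proposition \ref{trivial}, and in the equality case conclude $T=0$ and recover $E$ by applying the dualizing functor again. Your write-up merely makes explicit a few points the paper leaves implicit (the check that $\nu_{\alpha,\beta}(E)\neq+\infty$ so that Proposition \ref{duality} applies, and the sign bookkeeping for $T$), which is fine.
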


\textit{Proof.} Applying Proposition \ref{duality} to $E$ we obtain a tilt-semistable object $\widetilde{E}$ with $\mathrm{ch}_{\le 2}(\widetilde E)=\mathrm{ch}_{\le 2}(\mathcal O_X(-n)^{\oplus m})$, and the exact triangle (\ref{triangle}) gives $\mathrm{ch}_3(E)=\mathrm{ch}_3(\widetilde{E})-\mathrm{ch}_3(T)$. Proposition \ref{trivial} says that $\mathrm{ch}_3(\widetilde{E})\le\mathrm{ch}_3(\mathcal O_X(-n)^{\oplus m})$, which gives the desired bound on $\mathrm{ch}_3(E)$. In case when this bound is achieved we have $T=0$ and $\mathbf{R}\mathcal{H}om(E,\mathcal O_X)[1]\cong\mathcal O_X(-n)^{\oplus m}$. Applying the dualizing functor $\mathbf{R}\mathcal{H}om(-,\mathcal O_X)[1]$ to both sides we get $E\cong\mathcal O_X(n)^{\oplus m}[1]$. $\square$

Now we move to the main result of this article.

\begin{theorem}\label{main}Let $E\in\mathrm{Coh}^\beta(X)$ be a tilt-semistable object with $\mathrm{ch}(E)=2+cH+dH^2+eH^3$.\\
	(1) If $c=-1$, then $d\le \frac{1}{10}$. \\
	(1.1) If $d=\frac{1}{10}$, then $e\le \frac{1}{30}$. In case of equality we have $E\cong\mathcal U$.\\
	(1.2) If $d=-\frac{1}{10}$, then $e\le\frac{7}{30}$. In case of equality $E$ is included into an exact triple
	$$0\to\mathcal O_X(-1)^{\oplus 4}\to E\to \mathcal U(-1)[1]\to 0.$$  
	(2) If $c=0$, then $d\le 0$.\\
	(2.1) If $d=0$, then $e\le 0$. In case of equality $E\cong\mathcal O_X^{\oplus 2}$.\\
	(2.2) If $d=-\frac 15$, then $e\le-\frac{1}{5}$. In case of equality $E$ is a coherent sheaf, included into an exact triple
	$$0\to E\to\mathcal O_X^{\oplus 2}\to\mathcal O_L(1)\to 0$$
	for a projective line $L\subset X$. \\
	(2.3) If $d=-\frac 25$, then $e\le 0$. In case of equality $E$ is included into an exact triple
	$$0\to\mathcal U^{\oplus 4}\to E \to \mathcal Q(-1)^{\oplus 2}[1]\to 0.$$
\end{theorem}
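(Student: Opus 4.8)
The plan is to treat the six subcases along a common scheme, handling the bound on $d=\mathrm{ch}_2$ first and then the bound on $e=\mathrm{ch}_3$. For the $d$-bounds: since $E$ is $\nu_{\alpha,\beta}$-semistable, Proposition \ref{BG} gives $\overline{\Delta}_H(E)=25c^2-100d\ge 0$, hence $d\le c^2/4$. Combined with the integrality of the Chern classes forced by (\ref{ch}) (i.e. $c_2\in\mathbb{Z}$ and $d=\tfrac{c_1^2}{2}-\tfrac{c_2}{5}$), this yields $d\le\tfrac1{10}$ when $c=-1$ (so $c_2\ge 2$) and $d\le 0$ when $c=0$ (so $c_2\ge 0$); the extra constraint $\overline{\Delta}_H(E)\le 40$ then cuts the admissible $d$ down to exactly the values appearing in (1.1)--(2.3). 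The remaining and main content of each subcase is to maximize $e$.

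For the $\mathrm{ch}_3$-bounds I would start from the fact that, by Proposition \ref{2-stability} and Lemma \ref{minimal}, for $\alpha\gg 0$ the object $E$ is a $\mu$-semistable sheaf, and then follow it downward through the wall-and-chamber structure. Two mechanisms produce the bounds. When $E$ remains $\nu_{\alpha,\beta}$-semistable down to a point $(\beta,\alpha)\in D$, Corollary \ref{heart} together with Lemma \ref{nu-lambda} and Lemma \ref{C A} places $E$ or $E[1]$ in the heart $\mathfrak{C}$; Remark \ref{complexes} then writes it as a complex $\mathcal{O}_X(-1)^{\oplus a}\to\mathcal{Q}(-1)^{\oplus b}\to\mathcal{U}^{\oplus c}\to\mathcal{O}_X^{\oplus d}$ with non-negative $a,b,c,d$ uniquely determined by $\mathrm{ch}(E)$. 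Solving the three linear equations fixing $\mathrm{ch}_{\le 2}(E)$ leaves one free integer parameter, and in subcases (1.1), (2.2), (2.3) the value $e$ is a decreasing function of it, so the inequalities $a,b,c,d\ge 0$ pin down the minimal parameter and give the sharp bound. In subcase (2.1), where instead $\overline{\Delta}_H(E)=0$, the $\mathfrak{C}$-description gives no upper bound (there $e$ increases with the parameter), and the sharp $e\le 0$ comes directly from the generalized Bogomolov--Gieseker inequality (Proposition \ref{BMT}) via Proposition \ref{trivial}, forcing $E\cong\mathcal{O}_X^{\oplus 2}$.

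The genuinely different subcase is (1.2): its extremal sheaf does not lie in $\mathfrak{C}$ (the would-be multiplicities $a,b,c,d$ are negative), and the $\mathfrak{C}$-argument only yields the weaker $e\le\tfrac1{30}$. Here I would use honest wall-crossing. At the largest semicircular wall, which exists by Proposition \ref{numerical}(4), there is a destabilizing sequence $0\to F\to E\to G\to 0$ in $\mathrm{Coh}^\beta(X)$; Propositions \ref{numerical} and \ref{actual}, Lemma \ref{stable}, and Theorem \ref{rank bound} bound the ranks and discriminants of $F$ and $G$. Because $\overline{\Delta}_H(E)\le 40$ is so small, Proposition \ref{actual}(2) leaves only twisted trivial sheaves $\mathcal{O}_X(n)^{\oplus m}$ and their shifts, the exceptional bundles $\mathcal{U},\mathcal{Q}(-1)$ up to twist, and short sheaves on a line as possible pieces. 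Applying Proposition \ref{trivial} and Corollary \ref{-} (extremality of $\mathrm{ch}_3$ for $\mathcal{O}_X(n)^{\oplus m}$ and for $\mathcal{O}_X(n)^{\oplus m}[1]$) to the pieces and using additivity $\mathrm{ch}_3(E)=\mathrm{ch}_3(F)+\mathrm{ch}_3(G)$ gives $e\le\tfrac7{30}$, with equality precisely for the configuration built from $\mathcal{U}(-1)$ and $\mathcal{O}_X(-1)^{\oplus 4}$.

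Finally, in each equality case I would upgrade the homological description to the asserted short exact sequence of sheaves. In (1.1) the $\mathfrak{C}$-complex collapses to the single generator $\mathcal{U}[1]$, giving $E\cong\mathcal{U}$; in (2.3) it becomes the two-term complex $[\mathcal{Q}(-1)^{\oplus 2}\to\mathcal{U}^{\oplus 4}]$, whose differential is injective on sheaves, producing $0\to\mathcal{Q}(-1)^{\oplus 2}\to\mathcal{U}^{\oplus 4}\to E\to 0$; in (2.2) the extremal extension is recognized as $0\to E\to\mathcal{O}_X^{\oplus 2}\to\mathcal{O}_L(1)\to 0$; and in (1.2) the extremal wall-crossing datum gives $0\to\mathcal{U}(-1)\to\mathcal{O}_X(-1)^{\oplus 4}\to E\to 0$. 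I expect the main obstacle to be precisely this wall analysis: enumerating the numerically admissible destabilizing subobjects and excluding the spurious ones subcase by subcase, and then, at the extremal values, verifying that the object produced in the derived category is an honest coherent sheaf — that is, that the relevant map of bundles is fiberwise injective with cokernel of the expected type.
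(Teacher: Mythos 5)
Your overall architecture --- wall analysis to push $E$ down into the region $D$, then the heart $\mathfrak C$ via Lemma \ref{nu-lambda} and Corollary \ref{heart}, plus Riemann--Roch and Bogomolov--Gieseker input --- matches the paper's, and your mechanism for the $\mathrm{ch}_3$-bounds in (1.1), (2.2), (2.3) (non-negativity of the multiplicities in the $\mathfrak C$-resolution, with $e$ monotone in the one free parameter) is arithmetically correct and is a legitimate alternative to what the paper actually does: there the bounds come from $\chi(E)\le 0$ via Grothendieck--Riemann--Roch after the wall analysis forces $H^0(E)=H^2(E)=0$, and $\mathfrak C$ is used only to identify the extremal object. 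However, there are genuine gaps. The most serious is (2.2): the extremal object of $\mathfrak C$ there is the three-term complex $[\mathcal O_X(-1)\to\mathcal Q(-1)^{\oplus 3}\to\mathcal U^{\oplus 5}]$, and ``recognizing'' it as $0\to E\to\mathcal O_X^{\oplus 2}\to\mathcal O_L(1)\to 0$ is not a formality --- the line $L$ never appears in your argument. The paper produces it by a different idea that your plan lacks: since $c_3(E)<0$, $E$ cannot be reflexive, one studies $0\to E\to E^{\vee\vee}\to T\to 0$, shows $E^{\vee\vee}\cong\mathcal O_X^{\oplus 2}$ using slope-semistability and Proposition \ref{trivial}, and then classifies $T\cong\mathcal O_L(1)$. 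Some such sheaf-theoretic step is unavoidable.

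Your case (1.2) also has problems. The parenthetical that the $\mathfrak C$-argument ``only yields the weaker $e\le\frac 1{30}$'' is backwards: $\frac1{30}<\frac7{30}$, so that would be a \emph{stronger} and in fact false bound; the correct point is that the argument simply does not apply, because the extremal $E$ is destabilized before its $\nu=0$ curve reaches $D$ (the hypothesis only gives semistability on $\overline U$). More substantively, the extremal destabilizing quotient is $\mathcal U(-1)[1]$, and the extremality results you invoke (Proposition \ref{trivial} and Corollary \ref{-}) say nothing about it --- one needs the bound of case (1.1) for $\mathcal U$ transported through the derived dual via Proposition \ref{duality}, which is exactly how the paper handles this piece. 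Finally, your claim that low discriminant leaves ``only twisted trivial sheaves, $\mathcal U$, $\mathcal Q(-1)$ up to twist, and sheaves on a line'' as possible destabilizers is not justified and not literally true: the paper must separately rule out rank-three destabilizers with $\mathrm{ch}_{\le 2}=3-2H+\frac35H^2$ and $3-H+\frac1{10}H^2$ (Lemma \ref{3 -2 3/5} and Corollary \ref{no2}), the latter requiring a reflexivity/duality argument of its own. So the skeleton is sound, but the deferred ``wall analysis'' conceals several steps that need ideas absent from your toolbox, and the identification of the extremal sheaves in (1.2) and especially (2.2) is not established.
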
   

We consider the above cases in a sequence of lemmas. Firstly, let us prove the following auxiliary result.

\begin{lemma}\label{3 -2 3/5}If an object $E\in\mathrm{Coh}^\beta(X)$ is tilt-semistable and $\mathrm{ch}(E)=3-2H+\frac 35H^2+eH^3,$ then $e\le-\frac{7}{30}$.
\end{lemma}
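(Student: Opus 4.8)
The plan is to pass from tilt-stability to Bridgeland stability and then read the bound off the explicit heart $\mathfrak C$. A direct computation gives $\mu(E)=-\tfrac23$ and $\overline{\Delta}_H(E)=(H^2\cdot\mathrm{ch}_1)^2-2(H^3\cdot\mathrm{ch}_0)(H\cdot\mathrm{ch}_2)=100-90=10>0$. By Proposition \ref{numerical} the numerical vertical wall is $\beta=-\tfrac23$, and the curve $\nu_{\alpha,\beta}(E)=0$ is a hyperbola with asymptotes $\beta\pm\alpha=-\tfrac23$ whose left branch enters the region $D$ of (\ref{D}), crossing the boundary line $\alpha=\beta+1$ at $(\beta,\alpha)=(-\tfrac9{10},\tfrac1{10})$.

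First I would locate a point $(\beta_0,\alpha_0)\in\overline D$ on this branch, with $\nu_{\alpha_0,\beta_0}(E)=0$, at which $E$ is still $\nu_{\alpha_0,\beta_0}$-semistable; this is where the hypothesis, combined with the wall structure of Propositions \ref{numerical} and \ref{actual}, is used. If $E$ is destabilized before the arc is reached, at a semicircular wall, one passes to the induced sequence $0\to F\to E\to G\to 0$ with $\overline{\Delta}_H(F)+\overline{\Delta}_H(G)<10$ (Proposition \ref{actual}(2)) and bounds $\mathrm{ch}_3(F)$ and $\mathrm{ch}_3(G)$ separately, via Proposition \ref{trivial} and Corollary \ref{-}. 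Otherwise Lemma \ref{nu-lambda} shows that $E[1]$ is $\lambda_{\alpha_0,\beta_0,s}$-semistable for the $s$ furnished by Corollary \ref{heart}.

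Now I would use the heart $\mathfrak C$. Being $\lambda$-semistable, $E[1]$ lies in a single part of the torsion pair $(\mathcal T''_\gamma,\mathcal F''_\gamma)$ of Corollary \ref{heart}, so either $E[1]\in\mathfrak C$ or $E[2]\in\mathfrak C$; a check of cohomological degrees, using $E\in\mathrm{Coh}^\beta(X)$ (cohomology only in degrees $-1,0$), excludes the latter, so $E[1]\in\mathfrak C$. By Remark \ref{complexes}, $E[1]$ is then isomorphic to a complex $\mathcal O_X(-1)^{\oplus a}\to\mathcal Q(-1)^{\oplus b}\to\mathcal U^{\oplus c}\to\mathcal O_X^{\oplus d}$; matching $\mathrm{ch}_{\le2}$ forces $(a,b,c,d)=(a,2a-2,3a-2,a-1)$, so non-negativity gives $a\ge1$, while $\mathrm{ch}_3$ gives $e=\tfrac{1-3a}{15}$. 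The case $a=1$ yields $(1,0,1,0)$, i.e. $E[1]\cong\mathcal O_X(-1)[3]\oplus\mathcal U[1]$, which would force $\mathcal H^{-2}(E)\cong\mathcal O_X(-1)\neq0$, contradicting $E\in\mathrm{Coh}^\beta(X)$. Hence $a\ge2$, and $e=\tfrac{1-3a}{15}\le-\tfrac13\le-\tfrac{7}{30}$.

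The hard part is the reduction carried out in the second paragraph: promoting semistability on the closure of some open set to semistability at a point of the chosen arc of $\nu_{\alpha,\beta}(E)=0$ inside $D$, and disposing of the wall-crossing alternative. By contrast, the generalized Bogomolov--Gieseker inequality (Proposition \ref{BMT}) is of no use on its own here: optimizing $Q_{\alpha,\beta}(E)\ge0$ over the locus where $E$ is semistable produces a bound far weaker than $-\tfrac{7}{30}$, so the finer information carried by the exceptional collection, through Corollary \ref{heart} and Remark \ref{complexes}, is indispensable.
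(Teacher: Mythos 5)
Your endgame is sound and in fact sharper than the paper's: once one knows that $E$ is tilt-(semi)stable at a point of the arc of $\nu_{\alpha,\beta}(E)=0$ inside $D$, the passage to $\mathfrak C$ via Lemma \ref{nu-lambda} and Corollary \ref{heart} and the computation $(a,b,c,d)=(a,2a-2,3a-2,a-1)$, $e=\frac{1-3a}{15}$, with $a=1$ excluded, gives $e\le-\frac13$. (The paper is more conservative: it deduces $e\le-\frac{2}{15}$ from $H^0(E)=H^2(E)=0$ and $\chi(E)\le 0$, uses $\mathfrak C$ only to exclude the value $-\frac{2}{15}$, and steps down by $\frac{1}{10}$ to $-\frac{7}{30}$.) One small repair: your exclusion of $E[2]\in\mathfrak C$ by ``cohomological degrees'' does not work --- $E[2]$ has cohomology in degrees $-3,-2$, which is perfectly compatible with membership in $\mathfrak C$; the correct reason is that $\lambda_{\alpha_0,\beta_0,s}(E[1])=+\infty>\gamma$, so $E[1]\in\mathcal T''_\gamma\subset\mathfrak C$.

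The genuine gap is the reduction you yourself flag as ``the hard part'' and then do not carry out: proving that $E$ is still semistable somewhere on that arc. The hypothesis only gives semistability on the closure of \emph{some} open set, and the entire substance of the paper's proof is to show that there are \emph{no} semicircular actual walls for $E$ in the region $\beta<-\frac23$. This is done by: (a) Lemma \ref{stable}, reducing to a tilt-stable destabilizer $F$ of rank $s>0$ with $\mu(F)<\mu(E)$; (b) checking $\beta_-(F),\beta_+(F)\in[-1,0)$ so that Theorem \ref{rank bound} applies and forces $s^2\le\overline{\Delta}_H(F)<\overline{\Delta}_H(E)=10$, hence $s\in\{1,2,3\}$; and (c) observing that the integrality constraint $0<x+\frac23 s+\frac{\sqrt{2/5}}{3}s<\sqrt{2/5}$ coming from $0<\mathrm{ch}_1^{\beta_-(E)}(F)<\mathrm{ch}_1^{\beta_-(E)}(E)$ admits no integer solution $x$ for any such $s$. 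None of this appears in your proposal, and your proposed fallback for the case where a wall \emph{is} crossed --- bounding $\mathrm{ch}_3(F)$ and $\mathrm{ch}_3(G)$ ``via Proposition \ref{trivial} and Corollary \ref{-}'' --- cannot work as stated: those results apply only to objects with $\mathrm{ch}_{\le 2}=\pm\mathrm{ch}_{\le 2}(\mathcal O_X(n)^{\oplus m})$, and a destabilizing subobject or quotient of $E$ has no reason to be of that form (when the paper does use such a d\'evissage, e.g.\ in Lemma \ref{U}, it first pins down the destabilizer's $\mathrm{ch}_{\le 2}$ exactly). Without step (a)--(c) or a substitute, the argument does not go through.
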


\textit{Proof.} If $E$ is not tilt-semistable for some $\beta<-\frac 23,\alpha>0$, then there is a semicircular wall $W$, which should intersect the ray $\beta=\beta_-(E)=\frac{-2-\sqrt{2/5}}{3}$ (by Proposition \ref{numerical} (2, 3)). Lemma \ref{stable} implies that this wall is induced by a tilt-stable subobject or quotient $F$ with $\mathrm{ch}_{\le 2}(F)=s+xH+yH^2,s>0,\mu(F)<\mu(E)$. Since $0<\mathrm{ch}_1^{\beta_-(E)}(F)<\mathrm{ch}_1^{\beta_-(E)}(E)$, we get
\begin{equation}\label{x1}0<x+\frac 23s+\frac{\sqrt{2/5}}{3}s<\sqrt{\frac 25}.
\end{equation}
Also we have $-1<\beta_-(E)<\beta_-(F)\le\mu(F)<\mu(E)<0$ (cf. Proposition \ref{actual} (5)) and $\beta_+(F)=2\mu(F)-\beta_-(F)<2\mu(E)-\beta_-(E)<0$. Hence $\beta_-(F),\beta_+(F)\in[-1,0)$. If $s\neq 1$, we can apply Theorem \ref{rank bound} to $F$ and get that $s^2\le\overline{\Delta}_H(F)<\overline{\Delta}_H(E)=10$. Hence, $s\in\{1,2,3\}$. A direct calculation shows that for any such $s$ there are no integer $x$ satisfying (\ref{x1}). Therefore, $E$ is tilt-semistable for all $\beta<-\frac 23,\alpha>0$.

Proposition \ref{2-stability} implies that $E$ should be a 2-semistable sheaf, in particular, $H^0(E)=0$. If $H^2(E)\neq 0$, then by Serre duality there is a nonzero morphism $E\to\mathcal O_X(-2)[1]$. However, such a morphism would destabilize $E$ below the wall $W(E,\mathcal O_X(-2)[1])$, contradicting what we have proved. We have $\mathrm{td}(T_{X})=1+H+\frac{8}{15}H^2+\frac 15H^3$ (see \cite[Lemma 2.1]{Qin}). Grothendieck-Riemann-Roch Theorem gives that $\chi(E)=5e+\frac 23=-h^1(E)-h^3(E)\le 0$, hence $e\le-\frac{2}{15}$.

Since $-1<\beta_-(E)<-\frac 12$, Lemma \ref{nu-lambda} implies that $E[1]$ is $\lambda_{\alpha,\beta,s}$-semistable for some $(\beta,\alpha)\in D$ with infinite $\lambda_{\alpha,\beta,s}(E[1])$. Then Corollary \ref{heart} implies that $E[1]$ belongs to $\mathfrak C$. For the case $e=-\frac{2}{15}$ we find that $E\cong\mathcal O_X(-1)[2]\oplus\mathcal U$. However, this object does not belong to $\mathrm{Coh}^\beta(X)$, so this case is impossible. Since the step by which $\mathrm{ch}_3$ can change for a fixed $\mathrm{ch}_{\le 2}$ is equal to $\frac{H^3}{10}$ (because $c_3$ belongs to $H^6(X,\mathbb Z)$), we get that $e\le-\frac{7}{30}$. $\square$

\begin{lemma}\label{U} If $E\in\mathrm{Coh}^\beta(X)$ is tilt-semistable and $\mathrm{ch}(E)=2-H+\frac{H^2}{10}+eH^3$, then $e\le\frac{1}{30}$, and in case of equality we have $E\cong\mathcal U$.
\end{lemma}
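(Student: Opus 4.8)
The plan is to first fix the numerical invariants, then run a wall analysis showing that $E$ is tilt-stable below the vertical wall except along a single candidate wall, and finally split into a ``wall'' case and a ``no wall'' case. Since $\mathrm{ch}_{\le 2}(E)=(2,-1,\tfrac1{10})$ is a primitive element of $\Lambda$, any tilt-semistable $E$ with this class is in fact tilt-stable in $U$. One computes $\mu(E)=-\tfrac12$, $\overline{\Delta}_H(E)=15$, and $\beta_\pm(E)=-\tfrac12\pm\tfrac{\sqrt{15}}{10}\in(-1,0)$.

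Suppose $E$ is destabilized at a semicircular wall for some $\beta<\mu(E)$. By Proposition \ref{numerical}(2),(3) this wall meets the ray $\beta=\beta_-(E)$, and by Lemma \ref{stable} it is induced by a tilt-stable sub- or quotient object $F$ with $\mathrm{ch}_{\le 2}(F)=s+xH+yH^2$, $s=\mathrm{ch}_0(F)>0$, $\mu(F)<\mu(E)$. The inequality $0<\mathrm{ch}_1^{\beta_-(E)}(F)<\mathrm{ch}_1^{\beta_-(E)}(E)$ of Proposition \ref{actual}(3) reads $0<x+(\tfrac12+\tfrac{\sqrt{15}}{10})s<\tfrac{\sqrt{15}}{5}$, while Proposition \ref{actual}(5) gives $\beta_-(F),\beta_+(F)\in[-1,0)$. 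Hence for $s\ge 2$ Theorem \ref{rank bound} together with Proposition \ref{actual}(2) forces $s^2\le\overline{\Delta}_H(F)<15$, so $s\in\{1,2,3\}$. Checking the admissible integers $x$ for each $s$, the only solution is $s=3$, $x=-2$, and then $0\le\overline{\Delta}_H(F)<15$ pins down $y=\tfrac35$. Thus the sole possible destabilizing wall is $W(E,F)$ with $\mathrm{ch}_{\le 2}(F)=3-2H+\tfrac35 H^2$ and complementary factor $G$ of class $\mathrm{ch}_{\le 2}(G)=(-1,1,-\tfrac12)=-\mathrm{ch}_{\le 2}(\mathcal O_X(-1))$.

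If this wall occurs, then on it $E$ is strictly tilt-semistable and sits in an extension whose factors are $F$ and $G$, both tilt-semistable on $W$. Lemma \ref{3 -2 3/5} applied to $F$ gives $\mathrm{ch}_3(F)\le-\tfrac7{30}$, and since $W$ contains points with $\beta>-1$, Corollary \ref{-} applied to $G$ gives $\mathrm{ch}_3(G)\le\mathrm{ch}_3(\mathcal O_X(-1)[1])=\tfrac16$. Additivity of $\mathrm{ch}_3$ then yields $e=\mathrm{ch}_3(F)+\mathrm{ch}_3(G)\le-\tfrac7{30}+\tfrac16=-\tfrac1{15}<\tfrac1{30}$, so in this case the bound holds strictly and equality cannot occur. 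If instead no such wall occurs, $E$ is tilt-stable for all $\beta<\mu(E)$, hence a $2$-stable sheaf by Proposition \ref{2-stability}. Then $\mu(E)<0$ forces $H^0(E)=0$, and a nonzero $H^2(E)$ would (by Serre duality) produce a morphism $E\to\mathcal O_X(-2)[1]$ destabilizing $E$ below $W(E,\mathcal O_X(-2)[1])$, a contradiction; so $H^2(E)=0$. Grothendieck--Riemann--Roch gives $\chi(E)=5e-\tfrac16=-h^1(E)-h^3(E)\le 0$, whence $e\le\tfrac1{30}$. In the equality case $\chi(E)=0$ forces $H^i(E)=0$ for all $i$; picking $(\beta_0,\alpha_0)\in D$ on the locus $\nu_{\alpha,\beta}(E)=0$ (which is possible because $\beta_-(E)\in(-1,-\tfrac12)$), Lemma \ref{nu-lambda} shows $E[1]$ is $\lambda_{\alpha_0,\beta_0,s}$-semistable of infinite slope, so $E[1]\in\mathfrak C$ by Corollary \ref{heart}. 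By Remark \ref{complexes} the class $\mathrm{ch}(E[1])=-\mathrm{ch}(\mathcal U)$ determines $E[1]$ as the complex with $(a,b,c,d)=(0,0,1,0)$, i.e. $E[1]\cong\mathcal U[1]$ and $E\cong\mathcal U$.

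The main obstacle is the wall analysis in the second paragraph: unlike the auxiliary Lemma \ref{3 -2 3/5}, here a genuine numerical candidate wall (the case $s=3$) survives all integrality constraints, so it cannot simply be excluded. The crux is to recognize its two factors as precisely the class treated in Lemma \ref{3 -2 3/5} and a shifted twist of $\mathcal O_X(-1)$, so that those earlier bounds combine to give $e\le-\tfrac1{15}$, strictly below $\tfrac1{30}$; this is what confines the extremal case to the wall-free situation and lets the identification $E\cong\mathcal U$ go through.
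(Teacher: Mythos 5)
Your proposal is correct and follows essentially the same route as the paper's own proof: the identical wall analysis narrowing the candidate destabilizer to $s=3$, $x=-2$, $y=\tfrac35$, the combination of Lemma \ref{3 -2 3/5} and Corollary \ref{-} to bound $e\le-\tfrac1{15}$ on that wall, and then Riemann--Roch plus the heart $\mathfrak C$ to get $e\le\tfrac1{30}$ and $E\cong\mathcal U$ in the equality case. The only cosmetic differences are your explicit (and unneeded) remark that $\chi(E)=0$ forces all $H^i(E)$ to vanish and your phrasing of why Corollary \ref{-} applies to $G$.
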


\textit{Proof.} Suppose that $E$ is destabilized at a semicircular wall by a tilt-stable subobject or quotient $F$ with $\mathrm{ch}_{\le 2}(F)=s+xH+yH^2$. The wall should intersect the ray $\beta=\beta_-(E)=-\frac 12-\sqrt{\frac{3}{20}}$. Again we may assume that $s>0,\mu(F)<\mu(E)$. Since $0<\mathrm{ch}_1^{\beta_-(E)}(F)<\mathrm{ch}_1^{\beta_-(E)}(E)$, we have
\begin{equation}\label{x2}0<x+\frac s2+\sqrt{\frac{3}{20}}s<\sqrt{\frac 35}.
	\end{equation}
We also have $-1<\beta_-(E)<\beta_-(F)\le\mu(F)<\mu(E)<0$ and $\beta_+(F)=2\mu(F)-\beta_-(F)<2\mu(E)-\beta_-(E)<0$. If $s\neq 1$, then, applying Theorem \ref{rank bound}, we get that $s^2\le\overline{\Delta}_H(F)<\overline{\Delta}_H(E)=15$. So $s\in\{1,2,3\}$. The only option for an integer $x$ in (\ref{x2}) is $s=3,x=-2$. Using the inequalities $0\le\overline{\Delta}_H(F)<\overline{\Delta}_H(E)$ we get that $y=\frac 35$. Then $\mathrm{ch}_{\le 2}(E/F)=-1+H-\frac{H^2}{2}$ (by abuse of notation we assume that $F$ is a subobject; if it is a quotient, then we can consider the kernel of the surjection $E\twoheadrightarrow F$ instead of $E/F$). Lemma \ref{3 -2 3/5} implies that $\frac{\mathrm{ch}_3(F)}{H^3}\le-\frac{7}{30}$; Proposition \ref{-}, which can be applied, since the wall is not vertical, gives $\frac{\mathrm{ch}_3(E/F)}{H^3}\le\frac 16$. Hence in this case $e\le-\frac{1}{15}$. But $E\cong\mathcal U$ has a bigger third Chern character, hence we get that $E$ should be tilt-stable for all $\beta<-\frac 12,\alpha>0$.

Tilt-stability of $E$ implies that $H^0(E)=0$. If $H^2(E)\neq 0$, then there would exist a non-zero morphism $E\to\mathcal O_X(-2)[1]$, contradicting the fact that $E$ is tilt-stable for all $\beta<-\frac 12,\alpha>0$. Grothendieck-Riemann-Roch Theorem gives that $\chi(E)=5e-\frac 16\le 0$. This gives $e\le\frac{1}{30}$. In case of equality Lemma \ref{nu-lambda} and Corollary \ref{heart} imply that $E[1]$ belongs to $\mathfrak C$. Chern character of $E$ implies that $E\cong\mathcal U$. $\square$

\begin{lemma}\label{2 0 -1/5} If $E\in\mathrm{Coh}^\beta(X)$ is tilt-semistable and $\mathrm{ch}(E)=2-\frac{H^2}{5}+eH^3$, then $e\le-\frac{1}{5}$, and in case of equality $E$ is a coherent sheaf, included into an exact triple
\begin{equation}0\to E\to\mathcal O_X^{\oplus 2}\to\mathcal O_L(1)\to 0\end{equation}
for a projective line $L\subset X$.
\end{lemma}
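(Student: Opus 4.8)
The plan is to follow the template of Lemmas \ref{3 -2 3/5} and \ref{U}: first rule out semicircular walls for $E$ (so that $E$ is tilt-stable for all $\beta<0$), then bound $e$ by Riemann--Roch, and finally treat the equality case geometrically. For the class $\mathrm{ch}(E)=2-\tfrac15 H^2+eH^3$ one has $\mu(E)=0$, $\overline{\Delta}_H(E)=20$ and $\beta_\pm(E)=\pm\sqrt{1/5}$. Observe that $\mathrm{ch}_{\le 2}(E)=(2,0,-\tfrac15)$ is twice $(1,0,-\tfrac1{10})$, yet no object of $\mathrm{Coh}^\beta(X)$ has the latter class (it would force $c_2=\tfrac12\notin\mathbb Z$); hence $E$ cannot be strictly tilt-semistable, and any tilt-semistable object with this class is automatically tilt-stable.

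For the wall analysis, suppose a semicircular wall is induced, by Lemma \ref{stable}, by a tilt-stable sub- or quotient object $F$ with $\mathrm{ch}_{\le 2}(F)=s+xH+yH^2$, $s>0$, $\mu(F)<\mu(E)=0$ (so $x\le -1$). By Proposition \ref{numerical} the wall meets the ray $\beta=\beta_-(E)=-\sqrt{1/5}$, and Proposition \ref{actual}(3) gives $0<x+\sqrt{1/5}\,s<\sqrt{4/5}$, which together with $x\le-1$ forces $s\ge 3$. Writing $\overline{\Delta}_H(F)=25x^2-50sy$, I split into two cases. If $y\le 0$ then $\overline{\Delta}_H(F)\ge 25x^2\ge 25>20=\overline{\Delta}_H(E)$, contradicting Proposition \ref{actual}(2). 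If $y>0$, a short computation gives $\beta_+(F)<0$, so with $\beta_-(E)<\beta_-(F)$ (Proposition \ref{actual}(5)) we get $\beta_\pm(F)\in[-1,0)$; since $s\ge 3$, $F$ is neither a shifted line bundle nor a twisted ideal of points, so Theorem \ref{rank bound} yields $\overline{\Delta}_H(F)\ge s^2$, whence $s^2<20$ and $(s,x)\in\{(3,-1),(4,-1)\}$. In both cases $y\in\tfrac1{10}+\tfrac15\mathbb Z$, and a direct check shows there is no admissible $y$: for $s=4$ the bound $\overline{\Delta}_H(F)=25-200y\ge 16$ gives $y\le\tfrac9{200}$, excluding every positive value; for $s=3$ the strict inequality $\overline{\Delta}_H(F)+\overline{\Delta}_H(G)=40-200y<20$ forces $y>\tfrac1{10}$, while $\overline{\Delta}_H(F)\ge 0$ forces $y\le\tfrac16$, leaving the empty set. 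Hence $E$ has no semicircular wall and is tilt-stable for all $\beta<0$, $\alpha>0$.

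By Proposition \ref{2-stability}, $E$ is then a $2$-stable sheaf, in particular $\mu$-semistable of slope $0$; I claim $H^0(E)=H^2(E)=H^3(E)=0$. For $H^0$, a nonzero $\mathcal O_X\to E$ is impossible because $\nu_{\alpha,\beta}(E)-\nu_{\alpha,\beta}(\mathcal O_X)=\tfrac1{10\beta}<0$ for all $\beta<0$, so there is no nonzero morphism between these tilt-semistable objects of $\mathrm{Coh}^\beta(X)$. For $H^3(E)=\Hom(E,\mathcal O_X(-2))^{\vee}$, a nonzero map would give a rank-$\le 1$ quotient of $E$ landing in $\mathcal O_X(-2)$, impossible for a $\mu$-semistable sheaf of slope $0$. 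For $H^2(E)=\Hom(E,\mathcal O_X(-2)[1])^{\vee}$, at a point such as $(\beta,\alpha)=(-1,\varepsilon)$ one has $E,\mathcal O_X(-2)[1]\in\mathrm{Coh}^{-1}(X)$ and $\nu_{\alpha,\beta}(E)>\nu_{\alpha,\beta}(\mathcal O_X(-2)[1])$, so a nonzero morphism would destabilize $E$, contradicting tilt-stability. Grothendieck--Riemann--Roch then gives $\chi(E)=1+5e=-h^1(E)\le 0$, i.e.\ $e\le-\tfrac15$.

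In the equality case $e=-\tfrac15$ we get $\chi(E)=0$, hence $H^\bullet(E)=0$. The plan is to realize $E$ as a kernel. Using $\chi(E,\mathcal O_X)=-\chi(E(-2))=2$ and $\Ext^3(E,\mathcal O_X)=H^0(E(-2))^{\vee}=0$ (again by $\mu$-semistability of $E(-2)$, of slope $-2$), the key step is to prove $\Ext^1(E,\mathcal O_X)=\Ext^2(E,\mathcal O_X)=0$, so that $\hom(E,\mathcal O_X)=2$. For this I would use that $E[1]$ lies in $\mathfrak C$ (Lemma \ref{nu-lambda} and Corollary \ref{heart}, applicable since the curve $\nu_{\alpha,\beta}(E)=0$, namely $\alpha^2=\beta^2-\tfrac15$, meets the region $D$) together with the position of the exceptional object $\mathcal O_X$ in $\mathfrak C$ to compare phases in the associated Bridgeland stability condition. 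The two resulting sections assemble into a map $\psi\colon E\to\mathcal O_X^{\oplus 2}$, which should be injective because $\operatorname{im}\psi\hookrightarrow\mathcal O_X^{\oplus 2}$ has slope $0$ and $E$ is $2$-stable. Its cokernel $Q$ has $\mathrm{ch}(Q)=\tfrac15 H^2+\tfrac15 H^3$, so it is supported in dimension $\le 1$ on a degree-one $1$-cycle, which must be a line $L$; ruling out zero-dimensional torsion (otherwise one could enlarge $E$ inside $\mathcal O_X^{\oplus 2}$, raising $\mathrm{ch}_3$ beyond the maximum just proved) shows $Q$ is a line bundle on $L\cong\mathbb P^1$, and $\chi(Q)=2$ then forces $Q\cong\mathcal O_L(1)$. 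I expect the main obstacle to be exactly the vanishing $\Ext^{1}(E,\mathcal O_X)=\Ext^{2}(E,\mathcal O_X)=0$ and the purity of $Q$: these are the points where tilt- and Bridgeland stability must be leveraged most carefully, since the Chern character alone a priori permits a nontrivial reflexive hull $E^{\vee\vee}$ and forbids reading off the triple directly from the $\mathfrak C$-resolution of $E[1]$.
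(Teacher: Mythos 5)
Your wall analysis and the derivation of the bound $e\le-\tfrac15$ are correct and follow the same route as the paper: you rule out semicircular walls by combining Proposition \ref{actual}(3) at $\beta=\beta_-(E)$ with Theorem \ref{rank bound} and the discriminant inequality, then get $\chi(E)=1+5e\le 0$ from the vanishing of $H^0$, $H^2$, $H^3$. In fact your treatment is slightly more careful than the paper's text, which asserts $\overline{\Delta}_H(E)=10$ where the correct value is $20$ and therefore never discusses $s=4$; your split into $y\le 0$ and $y>0$ and your explicit elimination of $(s,x)=(4,-1)$ close that case cleanly.

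The equality case, however, contains a genuine gap, and it is exactly the one you flag yourself. Your plan is to produce an embedding $E\hookrightarrow\mathcal O_X^{\oplus 2}$ from $\hom(E,\mathcal O_X)=2$, which requires $\Ext^1(E,\mathcal O_X)=\Ext^2(E,\mathcal O_X)=0$ on top of $\chi(E,\mathcal O_X)=2$; but the proposed mechanism (``compare phases'' using $E[1]\in\mathfrak C$ and the position of $\mathcal O_X$ in $\mathfrak C$) cannot deliver these vanishings. Both $E[1]$ and $\mathcal O_X$ are objects of the heart $\mathfrak C$, so the groups in question are $\Hom(E[1],\mathcal O_X[2])$ and $\Hom(E[1],\mathcal O_X[3])$, i.e.\ $\Ext^2$ and $\Ext^3$ inside a heart; no slope or phase comparison forces those to vanish. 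Concretely, solving for the $\mathfrak C$-resolution of $E[1]$ from its Chern character gives the complex $\mathcal O_X(-1)\to\mathcal Q(-1)^{\oplus 3}\to\mathcal U^{\oplus 5}\to 0$, with no $\mathcal O_X$ summand, so $\Hom(E,\mathcal O_X)$ is the kernel of a map of large vector spaces ($\mathbb C^{25}\to\mathbb C^{30}$) and is not read off from the resolution. The paper avoids this entirely: since $c_3(E)=10e=-2<0$, the sheaf $E$ cannot be reflexive; in the sequence $0\to E\to E^{\vee\vee}\to T\to 0$ the quotient $T$ cannot be $0$-dimensional (else $E^{\vee\vee}$ would be a tilt-stable reflexive sheaf violating the bound just proved), so $\tfrac{H\cdot\mathrm{ch}_2(T)}{H^3}\ge\tfrac15$; then $E^{\vee\vee}$ is shown to be strictly slope-semistable, and the Bogomolov--Gieseker and generalized Bogomolov--Gieseker inequalities together with Proposition \ref{trivial} force $E^{\vee\vee}\cong\mathcal O_X^{\oplus 2}$, after which $\mathrm{ch}(T)=\tfrac{H^2}{5}+\tfrac{H^3}{5}$ and $2$-stability of $E$ identify $T\cong\mathcal O_L(1)$. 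Your closing remark about ``enlarging $E$ inside $\mathcal O_X^{\oplus 2}$'' to rule out zero-dimensional torsion in the cokernel has a similar unproved premise (the enlarged sheaf must be shown tilt-semistable before the bound on $e$ applies); the paper instead constructs a destabilizing subsheaf $\mathcal I_p\hookrightarrow E$ directly. To complete your write-up you should replace the $\Hom(E,\mathcal O_X)$ argument by the double-dual analysis, or else supply an actual proof of the two $\Ext$-vanishings.
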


\textit{Proof.} Any semicircular wall for $E$ has to intersect the ray $\beta=\beta_-(E)=-\frac{\sqrt{5}}{5}$. We can assume that this wall is given by a tilt-stable subobject or quotient $F$ with $\mathrm{ch}_{\le 2}(F)=s+xH+yH^2$, $s>0,\mu(F)<\mu(E)$. Since $0<\mathrm{ch}_1^{\beta_-(E)}(F)<\mathrm{ch}_1^{\beta_-(E)}(E)$, we have
\begin{equation}\label{x3}0<x+\frac{\sqrt{5}}{5}s<\frac{2\sqrt{5}}{5}.
\end{equation}
We also have $-1<\beta_-(E)<\beta_-(F)\le\mu(F)<\mu(E)<0$. Dividing (\ref{x3}) by $s$, we obtain that $\mu(F)<\frac{2\sqrt{5}-s\sqrt{5}}{5s}$. Hence for $s\ge 4$ $\beta_+(F)=2\mu(F)-\beta_-(F)<\frac{4\sqrt{5}-s\sqrt{5}}{5s}\le 0$. In this case Theorem \ref{rank bound} can be applied to $F$ and gives $s^2\le\overline{\Delta}_H(F)<\overline{\Delta}_H(E)=10$. So, we get that $s\in\{1,2,3\}$.

If $s=1$, then (\ref{x3}) implies that $-\frac{\sqrt 5}{5}<x<\frac{\sqrt 5}{5}$, so $x=0$, but in this case the property $\mu(F)<\mu(E)$ does not hold and the wall is not semicircular. If $s=2$, then by (\ref{x3}) $-1<x<0$, which is impossible. And if $s=3$, then (\ref{x3}) implies that $-2<x<0$, so $x=-1$. Inequalities $0\le\overline{\Delta}_H(F)<20$ together with the fact that $y\in-\frac 12+\frac 15\mathbb Z$ (since $c_i(F)\in H^{2i}(X,\mathbb Z)$) imply that $y=\frac{1}{10}$. But then we find that $\overline{\Delta}_H(F)=\overline{\Delta}_H(E/F)=10$, contradicting the inequality $\overline{\Delta}_H(F)+\overline{\Delta}_H(E/F)<\overline{\Delta}_H(E)$. Therefore, $E$ is $\nu_{\alpha,\beta}$-semistable for all $\alpha>0,\beta<0$.

Tilt-semistability of $E$ implies that $H^0(E)=0$. Also $H^2(E)\cong\mathrm{Hom}(E,\mathcal O_X(-2)[1])^\vee=0$, because otherwise there would be a semicircular wall. Grothendieck-Riemann-Roch Theorem gives $\chi(E)=5e+1\le 0$, hence $e\le-\frac 15$. Assume that $e=-\frac 15$. Proposition \ref{2-stability} implies that $E$ is a 2-semistable sheaf. Since $E$ is a rank 2 sheaf with negative third Chern class, it cannot be reflexive (see the proof of \cite[Proposition 2.6]{H}). The sheaf $E$ can be included into an exact sequence
$$0\to E\to E^{\vee\vee}\to T\to 0,$$
where $\dim \mathrm{supp}(T)\le 1$. If $\dim \mathrm{supp}(T)=0$, then $\nu_{\alpha,\beta}(E^{\vee\vee})=\nu_{\alpha, \beta}(E)$ for all $(\beta,\alpha)$, and tilt-semistability of $E$ implies tilt-semistability of $E^{\vee\vee}$. Applying to $E^{\vee\vee}$ the above arguments, we obtain that $\frac{\mathrm{ch}_3(E^{\vee\vee})}{H^3}\le-\frac 15$, contradicting reflexivity of $E^{\vee\vee}$. 

So, $\mathrm{supp}(T)$ contains 1-dimensional components, and $\frac{H\cdot\mathrm{ch}_2(T)}{H^3}\ge\frac 15$. As Schmidt noted in \cite[Lemma 2.1]{Sch23}, the dual of a slope semistable sheaf is slope semistable, hence $E^{\vee\vee}$ is slope semistable. If it is slope stable, then Bogomolov-Gieseker inequality implies that $\mathrm{ch}_{\le 2}(E)=(2,0,0)$. The generalized Bogomolov-Gieseker inequality implies $\frac{\mathrm{ch}_3(E^{\vee\vee})}{H^3}\le 0$. But, as we already mentioned, the third Chern class of a rank 2 reflexive sheaf has to be non-negative, implying that $\mathrm{ch}_3(E^{\vee\vee})=0$. Proposition \ref{trivial} implies that $E^{\vee\vee}\cong\mathcal O_X^{\oplus 2}$, but this sheaf is not slope stable.

Therefore, $E^{\vee\vee}$ is strictly slope semistable, implying that there is a short exact sequence in $\mathrm{Coh}(X)$
\begin{equation}\label{destab}0\to F\to E^{\vee\vee}\to G\to 0,
\end{equation}
where $F$ and $G$ are torsion free rank one sheaves with $\mu(F)=\mu(G)=0$. Bogomolov-Gieseker inequality implies that $\frac{H\cdot\mathrm{ch}_2(F)}{H^3}$ and $\frac{H\cdot\mathrm{ch}_2(G)}{H^3}$ are non-positive, and positivity of $\frac{H\cdot\mathrm{ch}_2(T)}{H^3}$ implies that $\mathrm{ch}_{\le 2}(F)=\mathrm{ch}_{\le 2}(G)=(1,0,0)$. Generalized Bogomolov-Gieseker inequality implies that $\frac{\mathrm{ch}_3(F)}{H^3}$ and $\frac{\mathrm{ch}_3(G)}{H^3}$ are non-positive, and non-negativity of $c_3(E^{\vee\vee})$ implies their vanishing. By Proposition \ref{trivial} we get $F\cong\mathcal O_X,G\cong\mathcal O_X$, hence $E^{\vee\vee}\cong\mathcal O_X^{\oplus 2}$.

Since $\mathrm{ch}(T)=\frac{H^2}{5}+\frac{H^3}{5}$, one-dimensional part of $\mathrm{supp}(T)$ consists of one projective line $L\subset X$. Classification of coherent sheaves on $\mathbb P^1$ implies that $T\cong\mathcal O_L(a)\oplus T',\dim \mathrm{supp}(T')\le 0$. Chern character of $T$ together with existence of a surjective morphism $\mathcal O_X^{\oplus 2}\twoheadrightarrow T$ implies that $T\cong\mathcal O_L(1)$ or $T\cong\mathcal O_L\oplus\mathcal O_p,p\in X$. However, in the second case one can, using that $H^0(E)=0$, construct a subsheaf $\mathcal I_p\hookrightarrow E$, contradicting 2-stability of $E$. And in case when $T\cong\mathcal O_L(1)$ one can show that $\mathrm{Hom}(\mathcal I_Z,E)=0$ if $\dim Z\le 0$, implying that $E$ is 2-semistable. $\square$

\begin{lemma}\label{2 -1 -1/10} If $E\in\mathrm{Coh}^\beta(X)$ is tilt-semistable and $\mathrm{ch}(E)=2-H-\frac{H^2}{10}+eH^3$, then $e\le\frac{7}{30}$, and in case of equality $E$ is included into an exact triple
\begin{equation}\label{triple}0\to\mathcal O_X(-1)^{\oplus 4}\to E\to \mathcal U(-1)[1]\to 0.\end{equation}
\end{lemma}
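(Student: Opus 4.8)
The plan is to follow the pattern of Lemmas \ref{U} and \ref{2 0 -1/5}, but two of the devices used there break down here and must be replaced. Since $\mathrm{ch}(E)=2-H-\frac1{10}H^2+\cdots$ we have $\mu(E)=-\frac12$ and $\overline{\Delta}_H(E)=35$, so
$$\beta_-(E)=-\tfrac12-\tfrac{\sqrt{35}}{10}<-1.$$
This is the first case with $\beta_-(E)<-1$. Consequently Theorem \ref{rank bound} can no longer bound the rank of a destabilizer (the relevant $F$ will have $\mu(F)=-1\in\mathbb Z$, so that $\beta_-(F)$ and $\beta_+(F)$ straddle $-1$), and the shortcut ``$E[1]\in\mathfrak C$'' is unavailable, because the curve $\nu_{\alpha,\beta}(E)=0$ lies in $\{\beta\le\beta_-(E)\}$ and never meets $D$, so Lemma \ref{nu-lambda} and Corollary \ref{heart} do not apply to $E$ or $E[1]$.

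First I would run the wall analysis. By Lemma \ref{stable} a semicircular wall is induced by a tilt-stable $F$ with $\mathrm{ch}_{\le2}(F)=(s,x,y)$, $s>0$, $\mu(F)<\mu(E)$, meeting the ray $\beta=\beta_-(E)$, whence $0<\mathrm{ch}_1^{\beta_-(E)}(F)<\mathrm{ch}_1^{\beta_-(E)}(E)$. The resulting interval for $x$ is narrow and forces $x=-s$, i.e. $\mu(F)=-1$, for every left-branch wall (those with centre $\le\mu(E)$; right-branch walls do not affect stability for $\beta<\mu(E)$). Lacking Theorem \ref{rank bound}, I would instead use Proposition \ref{actual}(2), $\overline{\Delta}_H(F)+\overline{\Delta}_H(E/F)<35$, together with the integrality of $\mathrm{ch}_2$: a short computation shows that for every $s\ge2$ the only surviving class has $\overline{\Delta}_H(F)=0$, i.e. $\mathrm{ch}_{\le2}(F)=s\cdot\mathrm{ch}_{\le2}(\mathcal O_X(-1))$, but by Remark \ref{line bundles} a tilt-stable object of zero discriminant has rank $1$, a contradiction. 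Hence the only actual wall is $W(E,\mathcal O_X(-1))$, the semicircle of centre $-\frac{11}{10}$ and radius $\frac1{10}$, and $E$ is tilt-stable for all $\beta\in(-1,-\frac12)$, $\alpha>0$.

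Next I would derive the numerical bound. Being $2$-semistable (Proposition \ref{2-stability}) with $\mu(E)<0$, $E$ has $H^0(E)=0$; and $H^2(E)\cong\Hom(E,\mathcal O_X(-2)[1])^\vee=0$, since at a point with $\beta$ slightly above $-1$ the object $E$ is tilt-stable and $\nu_{\alpha,\beta}(E)>\nu_{\alpha,\beta}(\mathcal O_X(-2)[1])$, ruling out a nonzero morphism. Grothendieck--Riemann--Roch then gives $\chi(E)=5e-\frac76\le0$, that is $e\le\frac7{30}$.

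The hard part is the equality case, where the collection-theoretic identification fails. When $e=\frac7{30}$ one finds $\chi(E)=0$, in fact $H^i(E)=0$ for all $i$, while $\chi(E(1))=4$ together with the vanishing of the higher cohomology of $E(1)$ gives $h^0(E(1))>0$; the resulting nonzero map $\mathcal O_X(-1)\to E$ has $\nu_{\alpha,\beta}(\mathcal O_X(-1))=\nu_{\alpha,\beta}(E)$ on $W(E,\mathcal O_X(-1))$, so $E$ is strictly semistable there and genuinely destabilized. At a point of the wall there is a short exact sequence
$$0\to\mathcal O_X(-1)^{\oplus k}\to E\to B\to0$$
in $\mathrm{Coh}^\beta(X)$, with $k=h^0(E(1))$ and $B$ tilt-semistable of the same tilt-slope, where $\overline{\Delta}_H(B)=35-5k\ge0$ forces $k\le7$ and $\mathrm{ch}_3(E)=-\frac k6H^3+\mathrm{ch}_3(B)$. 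I would then bound $\mathrm{ch}_3(B)$ for each admissible $k$; most of the classes $\mathrm{ch}_{\le2}(B)$ have $\mathrm{ch}_0(B)<0$, so this uses Proposition \ref{duality}, Corollary \ref{-} and a dual form of Lemma \ref{U}, and show that $e=\frac7{30}$ is attained only for $k=4$, forcing $B\cong\mathcal U(-1)[1]$. For this configuration the sequence reads $0\to\mathcal O_X(-1)^{\oplus4}\to E\to\mathcal U(-1)[1]\to0$ in $\mathrm{Coh}^\beta(X)$, and passing to cohomology sheaves turns it into the asserted triple $0\to\mathcal U(-1)\to\mathcal O_X(-1)^{\oplus4}\to E\to0$ in $\mathrm{Coh}(X)$. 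The main obstacle is exactly this last step: carrying out the case-by-case bound on $\mathrm{ch}_3(B)$ to pin down $k=4$ and to identify the quotient with $\mathcal U(-1)[1]$, since the clean reduction to $\mathfrak C$ used in the earlier lemmas is no longer available once $\beta_-(E)<-1$.
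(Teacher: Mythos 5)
You correctly diagnose why the devices of Lemmas \ref{U} and \ref{2 0 -1/5} fail here ($\beta_-(E)=-\tfrac12-\tfrac{\sqrt{35}}{10}<-1$, so neither Theorem \ref{rank bound} nor the reduction to $\mathfrak C$ applies), but the equality case --- which is the actual content of the lemma --- is left as a plan rather than a proof. You write ``I would then bound $\mathrm{ch}_3(B)$ for each admissible $k$ \dots and show that $e=\frac{7}{30}$ is attained only for $k=4$, forcing $B\cong\mathcal U(-1)[1]$,'' and then flag exactly this step as ``the main obstacle.'' That case analysis (for $k=4,\dots,7$, plus the verification that the destabilizing subobject is really $\mathcal O_X(-1)^{\oplus k}$ and not, say, a sum of stable objects with the same $\mathrm{ch}_{\le 2}$ but smaller $\mathrm{ch}_3$, such as twisted ideal sheaves of points) is not carried out, so the classification statement is not established. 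The paper avoids the case analysis altogether: it shows $\mathrm{Ext}^2(\mathcal O_X(-1),E)\cong\Hom(E,\mathcal O_X(-3)[1])=0$ because the numerical wall $W(E,\mathcal O_X(-3)[1])$ crosses the wall-free ray $\beta=-1$ (this is also the missing justification for your assertion that $E(1)$ has no higher cohomology), computes $\chi(\mathcal O_X(-1),E)=4$ to produce $0\to\mathcal O_X(-1)^{\oplus 4}\to E\to G\to 0$ with $\mathrm{ch}(G)=-2+3H-\frac{21}{10}H^2+\frac{9}{10}H^3$, and identifies $G$ by Proposition \ref{duality}: the twisted derived dual of $G$ has Chern character equal to $\mathrm{ch}(\mathcal U)$, so the equality case of Lemma \ref{U} forces the zero-dimensional term $T$ to vanish and $G\cong\mathcal U(-1)[1]$. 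Your ``dual form of Lemma \ref{U}'' is the right idea, but only for the single class with $k=4$; the other values of $k$ still have to be excluded, which your outline does not do.

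For the inequality $e\le\frac{7}{30}$ your route is workable but both heavier and looser than the paper's. The paper needs no wall analysis at all: since $H^2\cdot\mathrm{ch}_1^{-1}(E)$ has the minimal positive value, no actual wall meets $\beta=-1$, so the generalized Bogomolov--Gieseker inequality can be evaluated in the limit $(\beta,\alpha)\to(-1,0)$, giving $Q_{0,-1}(E)=41-150e\ge 0$ and hence $e\le\frac{7}{30}$ by integrality of $c_3$. In your version, the appeal to Remark \ref{line bundles} for ``a tilt-stable object of zero discriminant has rank $1$'' is a misquotation (that remark only says such objects are never destabilized at semicircular walls); the conclusion you actually need --- that all surviving left-branch numerical walls coincide with $W(E,\mathcal O_X(-1))$ --- already follows from the surviving classes being proportional to $\mathrm{ch}_{\le 2}(\mathcal O_X(-1))$, but the rank-one candidates $(1,-1,y)$ with $y\in\{\tfrac{3}{10},\tfrac{1}{10},-\tfrac{1}{10}\}$, and the sporadic classes with $x\ne -s$ for larger $s$, still need to be checked explicitly before you may assert that $W(E,\mathcal O_X(-1))$ is the only wall.
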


\textit{Proof.} Note that $H^2\cdot\mathrm{ch}^{-1}_1(E)$ has the least positive value, so there is no wall for $E$ at $\beta=-1$ for any $\alpha>0$. The inequality $Q_{\alpha,\beta}(E)<0$ defines a semi-disk lying below the numerical wall $Q_{\alpha,\beta}(E)=0$. Therefore the generalized Bogomolov-Gieseker inequality together with the fact that the walls are nested implies that $Q_{0,-1}(E)=41-150e\ge 0$. Using also the integrality of Chern classes of $E$, we obtain the bound $e\le\frac{7}{30}$. Assume that $e=\frac{7}{30}$.
	
A calculation shows that the numerical wall $W(E,\mathcal O_X(-3)[1])$ intersects the ray $\beta=-1$. Therefore, $\mathrm{Ext}^2(\mathcal O_X(-1),E)\cong\mathrm{Hom}(E,\mathcal O_X(-3)[1])=0$. Grothendieck-Riemann-Roch Theorem gives that $\mathrm{Hom}(\mathcal O_X(-1),E)\ge\chi(\mathcal O_X(-1),E)=4$. This gives an exact sequence
\begin{equation}\label{G}0\to\mathcal O_X(-1)^{\oplus 4}\to E\to G\to 0\end{equation}
in the category $\mathrm{Coh}^\beta(X)$ for $\beta<-1$ large enough. Here $\mathrm{ch}(G)=-2+3H-\frac{21}{10}H^2+\frac{9}{10}H^3$. We find $\mathrm{ch}(\mathbf{R}\mathcal{H}om(G,\mathcal O_X)[1](-2))=2-H+\frac{H^2}{10}+\frac{H^3}{30}$. Proposition \ref{duality} gives an exact triangle
$$\widetilde{G}\to \mathbf{R}\mathcal{H}om(G,\mathcal O_X)[1](-2)\to T[-1]\to \widetilde{G}[1]$$
with a tilt-semistable object $\widetilde{G}$ and a sheaf $T$ with $\dim T\le 0$. It follows that $\frac{\mathrm{ch}_3(\widetilde G)}{H^3}=\frac{1}{30}+\frac{\mathrm{ch}_3(T)}{H^3}.$ Since $\frac{\mathrm{ch}_3(T)}{H^3}\ge 0$ and, by Lemma \ref{U}, $\frac{\mathrm{ch}_3(\widetilde G)}{H^3}\le\frac{1}{30}$, we obtain that $T=0$ and $\mathbf{R}\mathcal{H}om(G,\mathcal O_X)[1](-2)\cong\mathcal U$. It follows that $G\cong\mathcal U(-1)[1]$. $\square$

\begin{remark}
Note that the exact sequence (\ref{triple}) gives an exact triangle $$\mathcal O_X(-1)^{\oplus 4}\to E\to \mathcal U(-1)[1]\to \mathcal O_X(-1)^{\oplus 4}[1].$$
Rotating it, we obtain an exact triangle
$$\mathcal U(-1)\to\mathcal O_X(-1)^{\oplus 4}\to E\to \mathcal U(-1)[1].$$
Hence, if $E$ is a sheaf, we have an exact sequence
$$0\to\mathcal U(-1)\to\mathcal O_X(-1)^{\oplus 4}\to E\to 0.$$
\end{remark}

In order to continue, we need several preparatory statements.

\begin{lemma}\label{3 -1 1/10}If $E\in\mathrm{Coh}^\beta(X)$ is tilt-semistable and $\mathrm{ch}(E)=3-H+\frac{1}{10}H^2+eH^3$, then $e\le -\frac 16$.
\end{lemma}

\textit{Proof.} Any semicircular wall for $E$ has to intersect the ray $\beta=\beta_-(E)=\frac{-10-2\sqrt{10}}{30}$. Assume that this wall is given by a tilt-stable subobject or quotient $F$ with $\mathrm{ch}_{\le 2}(F)=s+xH+yH^2, s>0,\mu(F)<\mu(E)$. Since $0<\mathrm{ch}_1^{\beta_-(E)}(F)<\mathrm{ch}_1^{\beta_-(E)}(E)$, we have
	\begin{equation}\label{x4}0<x+\frac 13s+\frac{2\sqrt{10}}{30}s<\frac{2\sqrt{10}}{10}.
	\end{equation}
We have $-1<\beta_-(E)<\beta_-(F)\le\mu(F)<\mu(E)<0$. Also, $\beta_+(F)=2\mu(F)-\beta_-(F)<2\mu(E)-\beta_-(E)=-\frac 23+\frac{10+2\sqrt{10}}{30}<0$. Hence Theorem \ref{rank bound} can be applied to $F$ and gives $s^2\le\overline{\Delta}_H(F)<\overline{\Delta}_H(E)=10$. So, we have $s\in\{1,2,3\}$.

If $s=1$, then the only integer $x$ satisfying (\ref{x4}) is $x=0$. However, in this case the property $\mu(F)<\mu(E)$ does not hold. If $s=2$, then $x=-1$. However, in this case $y\in-\frac 12+\frac 15\mathbb Z$, and the property $\overline{\Delta}_H(F)=25-100y\in [0,10)$ is not satisfied for any such $y$. And if $s=3$, then there are no integer $x$ satisfying (\ref{x4}).

We proved that there are no semicircular walls for $E$. 2-stability of $E$ implies that $H^0(E)=0$, and the absense of semicircular walls implies $H^2(E)=\mathrm{Hom}(E,\mathcal O_X(-2)[1])^\vee=0$. A calculation gives $\chi(E)=5e+\frac 56\le 0$, hence $e\le-\frac 16$. $\square$

\begin{lemma}\label{3 1 1/10}If $E\in\mathrm{Coh}^\beta(X)$ is tilt-semistable and $\mathrm{ch}(E)=3+H+\frac{1}{10}H^2+eH^3$, then $e\le -\frac{2}{15}$.
\end{lemma}

\textit{Proof.} We have $\mathrm{ch}(E(-1))=3-2H+\frac 35H^2+(e-\frac{1}{10})H^3$, and $E(-1)$ is also tilt-semistable. So Lemma \ref{3 -2 3/5} gives $e\le -\frac{2}{15}$. $\square$

\begin{corollary}\label{no2} There are no tilt-semistable objects $E\in\mathrm{Coh}^\beta(X)$ with $\mathrm{ch}_{\le 2}(E)=3-H+\frac{1}{10}H^2$.
\end{corollary}

\textit{Proof.} By the proof of Lemma \ref{3 -1 1/10} such an object $E$ should be tilt-semistable for all $\beta<-\frac 13,\alpha>0$, hence $E$ is a 2-semistable sheaf. The dual sheaf $E^\vee$ should be slope semistable, and in fact slope stable since its rank and first Chern class are coprime. If $E$ is reflexive, then $\mathrm{ch}_{\le 2}(E^\vee)=3+H+\frac{1}{10}H^2$. However, in this case Lemmas \ref{3 -1 1/10} and \ref{3 1 1/10} imply that $\mathrm{ch}_3(E)+\mathrm{ch}_3(E^\vee)<0$, contradicting Lemma \ref{reflexive}, so the sheaf $E$ is not reflexive. There is a short exact sequence
$$0\to E\to E^{\vee\vee}\to T\to 0$$
with $\dim\mathrm{supp}\ T\le 1$. The support of $T$ cannot be zero-dimensional, because in this case $\mathrm{ch}_{\le 2}(E^{\vee\vee})=3-H+\frac{1}{10}H^2$ and by the previous argument $E^{\vee\vee}$ cannot be reflexive. And if $\dim\mathrm{supp}\ T=1$, then $\mathrm{ch}_{\le 2}(E^{\vee\vee})=3-H+(\frac{1}{10}+d)H^2$ for $d\ge\frac 15$ and Bogomolov-Gieseker inequality for $E^{\vee\vee}$ is not satisfied. $\square$

Now we are ready to make the next step.

\begin{lemma}\label{2 0 -2/5} If $E\in\mathrm{Coh}^\beta(X)$ is tilt-semistable and $\mathrm{ch}(E)=2-\frac 25H^2+eH^3$, then $e\le 0$, and in case of equality $E$ is included into an exact triple
	\begin{equation}\label{triple2}0\to\mathcal U^{\oplus 4}\to E\to\mathcal Q(-1)^{\oplus 2}[1]\to 0.\end{equation}
\end{lemma}

\textit{Proof.} Note that $\beta_-(E)=-\frac{\sqrt{10}}{5}\in(-1,-\frac 12)$. In order to use the heart $\mathfrak C$, we need to find a point $(\beta_0,\alpha_0)$ on the hyperbola $\nu_{\alpha,\beta}(E)=-\frac 25+\beta^2-\alpha^2=0$ with $\alpha_0<\beta_0+1$, such that $E$ is $\nu_{{\alpha_0},{\beta_0}}$-semistable. The curve $-\frac 25+\beta^2-\alpha^2=0$ intersects the line $\alpha=\beta+1$ at the point with $\beta=-\frac{7}{10},\alpha=\frac{3}{10}$, hence it suffices to find such a point $(\beta_0,\alpha_0)$ with $\alpha_0<\frac{3}{10}$.
	
Any semicircular wall for $E$ has to be defined by a tilt-stable subobject or quotient $F$ with $\mathrm{ch}_{\le 2}(F)=s+xH+yH^2,s>0,\mu(F)<\mu(E)$. If $s\ge 4$, then Proposition \ref{actual} (1) gives $\rho(E,F)\le\frac{\sqrt{5}}{10}<\frac{3}{10}$. If $E$ is tilt-semistable below some wall of radius $\rho\le\frac{\sqrt{5}}{10}$, then we can apply Lemma \ref{nu-lambda} and Corollary \ref{heart} to get that $E[1]\in\mathfrak C$. And if $E$ is tilt-semistable above all such walls and there are no walls of radius bigger than $\frac{\sqrt{5}}{10}$, then $E$ is, for example, tilt-stable at $(\beta_0,\alpha_0)=(-\sqrt{\frac{37}{80}},\frac 14)$, so $E[1]\in\mathfrak C$ too. So we need to rule out the cases $1\le s\le 3$.

We have $0<\mathrm{ch}_1^{\beta_-(E)}(F)<\mathrm{ch}_1^{\beta_-(E)}(E)$, that is,
	\begin{equation}\label{x5}0<x+\frac{\sqrt{10}}{5}s<\frac{2\sqrt{10}}{5}.
	\end{equation}

If $s=1$, then the only integer $x$ satisfying (\ref{x5}) is $x=0$, but in this case the property $\mu(F)<\mu(E)$ does not hold.

If $s=2$, then from (\ref{x5}) it follows that $x=-1$. We should have $\overline{\Delta}_H(F)=25-100y\in[0,40)$ and $y\in-\frac 12+\frac 15\mathbb Z$, hence $y\in\{-\frac{1}{10},\frac{1}{10}\}$. In the first case $\overline{\Delta}_H(F)=35$, in the second case $\overline{\Delta}_H(F)=15$. However, in both cases $\overline{\Delta}_H(E/F)=25$ and the property $\overline{\Delta}_H(F)+\overline{\Delta}_H(E/F)<\overline{\Delta}_H(E)$ is not satisfied.

Finally, if $s=3$, then (\ref{x5}) implies that $x=-1$. We have $\overline{\Delta}_H(F)=25-150y\in[0,40)$ and $y\in-\frac 12+\frac 15\mathbb Z$, hence $y=\frac{1}{10}$. However, in Corollary \ref{no2} we proved that there are no tilt-stable objects $F$ with such a Chern character. So, we have proved that $E[1]\in\mathfrak C$.

Tilt-semistability of $E$ implies that $H^0(E)=0$. If $H^2(E)\neq 0$, then there is a nonzero morphism $E\to\mathcal O_X(-2)[1]$. However, such a morphism would destabilize $E$ everywhere below the wall $W(E,\mathcal O_X(-2)[1])$, which is located above $W(E,\mathcal O_X(-1)[1])$ and has radius bigger than $\frac{3}{10}$, contradicting what we have proved. A calculation by Grothendieck-Riemann-Roch Theorem gives $\chi(E)=5e\le 0$, hence $e\le 0$. Since for $e=0$ $\mathrm{ch}(E[1])=4\mathrm{ch}(\mathcal U[1])+2\mathrm{ch}(\mathcal Q(-1)[2])$, we get 
an exact sequence of the form (\ref{triple2}) (using Remark \ref{complexes}). $\square$

\begin{remark}\label{Serre}Using the Serre correspondence, one can prove that there exist slope stable rank 2 vector bundles $E$ on $X$ with $c_1(E)=0,c_2(E)=2$ and $H^1(E(-1))=0$, the so-called instanton bundles \cite[Theorem D, Step 1 of the proof]{Even and odd}. It follows that there exist tilt-stable sheaves $E$ from (\ref{triple2}).
\end{remark}

\textit{Proof of Theorem \ref{main}.} The bounds on $\mathrm{ch}_2(E)$ follow from Bogomolov-Gieseker Inequality. Other statements are proven in Proposition \ref{trivial} and Lemmas \ref{U}, \ref{2 0 -1/5}, \ref{2 -1 -1/10}, \ref{2 0 -2/5}. $\square$

\section{Partial Results and a Conjecture}\label{conjectures}

\begin{lemma}\label{bound0} Let $E\in\mathrm{Coh}^\beta(X)$ be a tilt-semistable object with $\mathrm{ch}(E)=H+dH^2+eH^3$. Then $e\leq\frac{1}{24}+\frac{d^2}{2}.$
\end{lemma}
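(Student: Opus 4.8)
The plan is to read the bound off the generalized Bogomolov--Gieseker inequality (Proposition \ref{BMT}) evaluated at a point as close as possible to the point $(d,0)$ on the $\beta$-axis. First I would record the numerics. Since $\mathrm{ch}_0(E)=0$ and $\mathrm{ch}_1(E)=H$, we get $\overline{\Delta}_H(E)=(H^2\cdot\mathrm{ch}_1(E))^2=25$, $H^2\cdot\mathrm{ch}_1^\beta(E)=5$, $H\cdot\mathrm{ch}_2^\beta(E)=5(d-\beta)$ and $\mathrm{ch}_3^\beta(E)=5(e-\beta d+\tfrac{\beta^2}{2})$. Substituting into $Q_{\alpha,\beta}(E)\ge 0$ and dividing by $150$ yields, at every $(\beta,\alpha)$ where $E$ is $\nu_{\alpha,\beta}$-semistable,
$$e\le\frac{d^2}{2}+\frac{\alpha^2+(\beta-d)^2}{6}.$$
The key observation is that $\alpha^2+(\beta-d)^2$ is the squared distance from $(\beta,\alpha)$ to $(d,0)$. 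Moreover $\nu_{\alpha,\beta}(E)=d-\beta$, so the locus $\nu_{\alpha,\beta}(E)=0$ is the vertical line $\beta=d$; by Proposition \ref{numerical}(2) every semicircular wall for $E$ is therefore a nested semicircle centered at $(d,0)$. Note also that, by Lemma \ref{minimal}, a tilt-semistable object with $\mathrm{ch}_0=0$ and $\mathrm{ch}_1=H$ must be a pure two-dimensional sheaf, so $E$ lies in $\Coh^\beta(X)$ for \emph{every} $\beta$, which will let me evaluate freely on the line $\beta=d$.

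Next I would bound the radius of the largest wall by $\tfrac12$. Consider a semicircular wall given by $0\to F\to E\to G\to 0$ in $\Coh^\beta(X)$. Since $\mathrm{ch}_0(E)=0$, there are no vertical walls (Proposition \ref{numerical}(1)), so Proposition \ref{actual}(3) gives the strict inequalities $0<H^2\cdot\mathrm{ch}_1^\beta(F)<H^2\cdot\mathrm{ch}_1^\beta(E)=5$. A rank-zero subobject would have $H^2\cdot\mathrm{ch}_1^\beta(F)\in 5\Z$, which is impossible; hence $\mathrm{ch}_0(F)\ne 0$, and as $\mathrm{ch}_0(F)+\mathrm{ch}_0(G)=0$ exactly one of $F,G$, say $F'$, has $\mathrm{ch}_0(F')\ge 1>0=\mathrm{ch}_0(E)$. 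Applying Proposition \ref{actual}(1) to this positive-rank member of the wall gives
$$\rho(E,F')^2\le\frac{\overline{\Delta}_H(E)}{4(H^3)^2\,\mathrm{ch}_0(F')^2}=\frac{25}{100\,\mathrm{ch}_0(F')^2}\le\frac14.$$
Thus every semicircular wall for $E$ has radius at most $\tfrac12$.

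Finally I would combine the two steps. As $E$ is tilt-semistable, it is $\nu_{\alpha,\beta}$-semistable for $\alpha\gg 0$, hence outside all of its nested walls, and it remains semistable up to and including the largest wall, of radius $\rho_{\max}\le\tfrac12$, whose highest point is $(d,\rho_{\max})$. Evaluating the displayed inequality of the first paragraph there gives $\alpha^2+(\beta-d)^2=\rho_{\max}^2\le\tfrac14$, whence
$$e\le\frac{d^2}{2}+\frac{\rho_{\max}^2}{6}\le\frac{d^2}{2}+\frac{1}{24}.$$
If $E$ has no semicircular walls at all, I would instead let $\alpha\to 0$ and $\beta\to d$ to obtain the stronger $e\le\tfrac{d^2}{2}$. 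The step I expect to be the main obstacle is the careful justification that $E$ is genuinely $\nu_{\alpha,\beta}$-semistable \emph{at} the evaluation point $(d,\rho_{\max})$ together with the exclusion of a rank-zero destabilizer; once those are in place the remaining argument is the elementary optimization above. As a sanity check, the structure sheaf $\mathcal O_S$ of a hyperplane section $S\in|H|$ has $\mathrm{ch}(\mathcal O_S)=H-\tfrac12H^2+\tfrac16H^3$, so there $d=-\tfrac12$ and $e=\tfrac16=\tfrac{1}{24}+\tfrac{d^2}{2}$, showing both that the bound is sharp and that equality is attained precisely on the largest wall (of radius $\tfrac12$).
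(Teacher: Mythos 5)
Your argument is correct and is essentially the paper's own proof: the paper simply cites \cite[Lemma 3.3]{MS18} and notes that its proof carries over, and that proof is exactly your computation (all walls for this class are concentric semicircles centered at $(d,0)$ of radius at most $\tfrac12$ by Proposition \ref{actual}(1) applied to the positive-rank member of the destabilizing sequence, and $Q_{\alpha,\beta}(E)\ge 0$ evaluated at distance at most $\tfrac12$ from $(d,0)$ gives the bound). The one phrase to tighten is ``$E$ is $\nu_{\alpha,\beta}$-semistable for $\alpha\gg 0$,'' which does not follow from semistability at a single given point; however, if that given point lies inside the largest wall it is automatically within distance $\tfrac12$ of $(d,0)$, so your displayed inequality applies there directly and the conclusion is unaffected.
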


\textit{Proof.} The proof of \cite[Lemma 3.3]{MS18} holds true for any smooth projective threefold over $\mathbb C$, for which the generalized Bogomolov-Gieseker inequality (Proposition \ref{BMT}) holds, in particular, for $X_5$. $\square$

\begin{lemma} Let $E\in\mathrm{Coh}^\beta(X)$ be a tilt-semistable object with $\mathrm{ch}(E)=1-dH^2+eH^3$, then $e\leq\frac{d(d+1)}{2}$. \end{lemma}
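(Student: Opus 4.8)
The plan is to reduce the statement to the preceding lemma by twisting with the line bundle $\mathcal{O}_X(1)$. Tensoring with a line bundle is an autoequivalence of $\mathrm{D}^b(X)$ which sends $\mathrm{Coh}^\beta(X)$ to $\mathrm{Coh}^{\beta+1}(X)$ and carries $\nu_{\alpha,\beta}$-semistable objects to $\nu_{\alpha,\beta+1}$-semistable ones; this is exactly the mechanism (for the twist $\mathcal{O}_X(-1)$) already invoked in the proof of Lemma \ref{3 1 1/10}. Hence $E(1):=E\otimes\mathcal{O}_X(1)$ is again a tilt-semistable object, now lying in $\mathrm{Coh}^{\beta+1}(X)$, so the previous lemma is applicable to it.

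First I would record the twisted Chern character $\mathrm{ch}(E(1))=\mathrm{ch}(E)\cdot e^{H}$. A direct multiplication with $e^{H}=1+H+\tfrac{H^2}{2}+\tfrac{H^3}{6}$ gives
$$\mathrm{ch}(E(1))=1+H+\left(\tfrac 12-d\right)H^2+\left(e-d+\tfrac 16\right)H^3,$$
so that $E(1)$ has $\mathrm{ch}_0=1$ and $\mathrm{ch}_1=H$, which is precisely the numerical shape to which the previous lemma applies. Then I would feed this into that lemma with $d'=\tfrac 12-d$ and $e'=e-d+\tfrac 16$: the bound $e'\le\tfrac1{24}+\tfrac{d'^2}{2}$ becomes, after expanding $\left(\tfrac12-d\right)^2$ and using $\tfrac1{24}+\tfrac18=\tfrac16$, the inequality $e-d+\tfrac16\le\tfrac16-\tfrac d2+\tfrac{d^2}{2}$, which simplifies at once to $e\le\tfrac d2+\tfrac{d^2}{2}=\tfrac{d(d+1)}{2}$, as claimed. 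This final simplification is elementary and requires no case distinctions in $d$.

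There is no real obstacle here: the argument is a clean one-step reduction. The only point to verify with any care is that the twist $E\mapsto E(1)$ genuinely preserves tilt-semistability under the parameter shift $\beta\mapsto\beta+1$, so that the hypothesis of the previous lemma is met; this is a standard property of tilt-stability, already used earlier in the paper. The remaining work is the routine computation of $\mathrm{ch}(E(1))$ and the short algebraic substitution, both of which are straightforward.
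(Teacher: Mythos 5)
There is a genuine gap, and it sits at the heart of the reduction. Lemma \ref{bound0} is a statement about objects of \emph{rank zero}: its hypothesis reads $\mathrm{ch}(E)=H+dH^2+eH^3$, i.e.\ $\mathrm{ch}_0(E)=0$ and $\mathrm{ch}_1(E)=H$. Your twisted object $E(1)$ has $\mathrm{ch}_0=1$, as you yourself note, and tensoring by a line bundle never changes the rank, so no twist can move the class $1-dH^2+eH^3$ into the numerical range covered by Lemma \ref{bound0}. What your (correct) Chern character computation actually shows is that the inequality to be proved is \emph{equivalent}, under $E\mapsto E(1)$, to the bound $e'\le\frac{1}{24}+\frac{(d')^2}{2}$ for rank-one tilt-semistable objects with $\mathrm{ch}_1=H$ --- but that statement is not established anywhere before this point; it is just a reformulation of the lemma you are trying to prove. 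The argument is therefore circular. (The incidental claims are fine: twisting by $\mathcal O_X(1)$ does carry $\nu_{\alpha,\beta}$-semistable objects of $\Coh^\beta(X)$ to $\nu_{\alpha,\beta+1}$-semistable objects of $\Coh^{\beta+1}(X)$, and the algebra at the end is correct.)

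For comparison, the paper proves the lemma directly: since $H^2\cdot\mathrm{ch}_1^{-1}(E)$ takes the minimal positive value, no actual wall meets the ray $\beta=-1$, so $E$ is tilt-semistable there and the generalized Bogomolov--Gieseker inequality forces $Q_{0,-1}(E)\ge 0$; combined with the integrality $e\in\frac{1}{10}\mathbb Z$ this settles the cases $0\le d<1$, and for $d\ge 1$ it invokes the wall-crossing argument of \cite[Proposition 3.2]{MS18}. To repair your approach you would need an independent proof of the rank-one bound in one of the two (equivalent) normalizations, which amounts to redoing exactly this work.
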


\textit{Proof.} Note that $H^2\cdot\mathrm{ch}^{-1}_1(E)$ has the least positive value, so $Q_{0,-1}(E)\geq 0$. This, together with the fact that $e\in\frac{1}{10}\mathbb Z$, implies that for $d=0$ or $d=\frac 15$ we have $e\leq 0$; for $d=\frac 25$ we have $e\leq\frac 15$; for $d=\frac 35$ we have $e\leq\frac 25$ and for $d=\frac 45$ we have $e\leq\frac 35$. For $d\geq 1$ the statement follows from arguments in the proof of \cite[Proposition 3.2]{MS18}. $\square$

Denote by $S$ a smooth hyperplane section of $X\subset\mathbb P^6$. It is known that $S$ is a del Pezzo surface of degree 5, isomorphic to a blow up of $\mathbb P^2$ in 4 generic points. Denote by $p:S\to\mathbb P^2$ the projection of the blow up, denote by $H'$ the class of $p^*(H)$ in $\mathrm{Cl}\ S$ and by $E_1,E_2,E_3,E_4$ the exceptional divisors corresponding to the blown up points. We can compute that $c_1(T_S)=-c_1(\omega_S)=3H'-\sum_{i=1}^4E_i$ and $c_2(T_S)=c_2(T_{\mathbb P^2})+4=7$ (using \cite[Example 15.4.3]{Fulton}), which implies that $\mathrm{td}(T_S)=1+\frac{3H'-\sum_{i=1}^4E_i}{2}+P$ (here $P$ is the class of a point). Denote by $D$ the divisor $aH'+\sum_{i=1}^4b_iE_i$ and by $i:S\to X$ the canonical injection.

\begin{lemma}\label{GRR}We have 
$$\mathrm{ch}(i_*\mathcal O_S(D))=H+\left(\frac{3a+\sum\limits_{i=1}^4b_i}{5}-\frac 12\right)H^2+\left(\frac{a^2-3a-\sum\limits_{i=1}^4b_i^2-\sum\limits_{i=1}^4b_i}{10}+\frac 16\right)H^3.$$\end{lemma}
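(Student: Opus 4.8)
The plan is to read off $\mathrm{ch}(i_*\mathcal O_S(D))$ from the Grothendieck--Riemann--Roch theorem for the closed immersion $i\colon S\to X$, which I would write in the solved-for shape
$$\mathrm{ch}(i_*\mathcal O_S(D))=\mathrm{td}(T_X)^{-1}\cdot i_*\bigl(\mathrm{ch}(\mathcal O_S(D))\cdot\mathrm{td}(T_S)\bigr).$$
Every ingredient is already at hand: $\mathrm{td}(T_S)=1+\frac{3H'-\sum_iE_i}{2}+P$ is recorded just above the statement, and $\mathrm{td}(T_X)=1+H+\frac{8}{15}H^2+\frac15H^3$ appears in the proof of Lemma \ref{3 -2 3/5}. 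So the only genuinely new work is assembling these pieces and translating between $H^*(S)$ and $H^*(X)$.

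First I would compute the factor living on $S$. Since $\mathrm{ch}(\mathcal O_S(D))=1+D+\frac{D^2}{2}$ with $D=aH'+\sum_ib_iE_i$, the intersection table $H'^2=1$, $H'\cdot E_i=0$, $E_i\cdot E_j=-\delta_{ij}$ gives $D^2=a^2-\sum_ib_i^2$ (a multiple of the point class $P$ on $S$) and $D\cdot(3H'-\sum_iE_i)=3a+\sum_ib_i$. Expanding $\mathrm{ch}(\mathcal O_S(D))\cdot\mathrm{td}(T_S)$ and grouping by codimension then produces the codimension-$0$ part $1$, the divisorial part $(a+\tfrac32)H'+\sum_i(b_i-\tfrac12)E_i$, and the $0$-dimensional part $\bigl(\tfrac12(a^2-\sum_ib_i^2+3a+\sum_ib_i)+1\bigr)P$.

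Next I would push this forward to $X$. The three basic facts are $i_*1=[S]=H$; that $i^*H=-K_S=3H'-\sum_iE_i$ by adjunction (as $K_X=-2H$ and $S\in|H|$, so $K_S=(K_X+S)|_S=-H|_S$); and the relations $H^2=5L$, $H^3=5P$, $H\cdot L=P$. For a divisor class $\alpha$ on $S$ the projection formula gives $H\cdot i_*\alpha=(i^*H\cdot\alpha)_S\,P$, which locates $i_*\alpha$ inside $H^4(X)=\Z L$; applied to the divisorial part above it yields $i_*\alpha=(3a+\sum_ib_i+\tfrac52)L=\tfrac15(3a+\sum_ib_i+\tfrac52)H^2$, and the projection formula together with $i_*P=P$ handles the $0$-dimensional part. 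Finally I would multiply by $\mathrm{td}(T_X)^{-1}=1-H+\frac{7}{15}H^2-\frac{2}{15}H^3$ (the inverse of the stated Todd class) and collect the coefficients of $H$, $H^2$, $H^3$. The $H^2$-coefficient then combines the pushforward value with the $-H$ correction to give $\frac{3a+\sum b_i}{5}-\frac12$, while in $\mathrm{ch}_3$ the constant contributions $\frac15-\frac12+\frac7{15}=\frac16$ assemble the claimed $\frac16$.

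The computation is entirely routine once this dictionary is fixed, so the one place to be careful is precisely the pushforward step: evaluating $i_*$ on divisor classes through the projection formula and consistently handling the normalizations $H^2=5L$, $H^3=5P$. Everything else is bookkeeping, and the lower coefficients furnish useful sanity checks, since $\mathrm{ch}_0=0$ and $\mathrm{ch}_1=H$ fall out automatically before one reads off $\mathrm{ch}_3$.
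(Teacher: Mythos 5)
Your proposal is correct and follows essentially the same route as the paper: both rest on Grothendieck--Riemann--Roch for $i$ together with the observation that adjunction gives $i^*H=-K_S=3H'-\sum_iE_i$, whence $i_*(H')=\frac{3H^2}{5}$ and $i_*(E_j)=\frac{H^2}{5}$, followed by bookkeeping with the Todd classes. The paper merely compresses the remaining expansion into ``a straightforward calculation,'' while you carry it out explicitly; the details you give (including $\mathrm{td}(T_X)^{-1}=1-H+\frac{7}{15}H^2-\frac{2}{15}H^3$ and the constant $\frac15-\frac12+\frac7{15}=\frac16$) all check out.
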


\textit{Proof.} Note that the embedding $S\to\mathbb P^5$ is given by the anticanonical linear system $|3H'-\sum_{i=1}^4E_i|$ (indeed, adjunction formula implies that $\omega_S^\vee\cong\mathcal O_X(1)|_S$). It follows that $i_*(H')=(3H'-\sum_{i=1}^4E_i,H')L=\frac{3H^2}{5}$ and $i_*(E_j)=(3H'-\sum_{i=1}^4E_i,E_j)L=\frac{H^2}{5}$ for all $j$. Now a straightforward calculation using the Grothendieck--Riemann--Roch Theorem gives the statement of the lemma. $\square$

\begin{proposition}\label{O_S(D)}Denote $3a+\sum_{i=1}^4b_i$ by $C$. In the following $k\in\mathbb Z$.
	\begin{enumerate}
		\item If $C=5k$, then $\frac{\mathrm{ch}_3(i_*\mathcal O_S(D))}{H^3}\le\frac{k^2-k}{2}+\frac 16$. In case of equality $a=3k,$ all $b_i$ are equal to $-k$.
		\item If $C=5k+1$, then $\frac{\mathrm{ch}_3(i_*\mathcal O_S(D))}{H^3}\le\frac{5k^2-3k-2}{10}+\frac 16$. In case of equality one of the following holds:
		\begin{enumerate}
			\item $a=3k$, one of $b_i$ is equal to $1-k$, while other three are equal to $-k$;
			\item $a=3k+1$, two of $b_i$ are equal to $-k$, while other two are equal to $-k-1$.
		\end{enumerate}
	    \item If $C=5k-1$, then $\frac{\mathrm{ch}_3(i_*\mathcal O_S(D))}{H^3}\le\frac{5k^2-7k}{10}+\frac 16$. In case of equality one of the following holds:
	    \begin{enumerate}
	    	\item $a=3k$, one of $b_i$ is equal to $-1-k$, while other three are equal to $-k$;
	    	\item $a=3k-1$, two of $b_i$ are equal to $-k$, while other two are equal to $1-k$.
	    \end{enumerate}
        \item If $C=5k+2$, then $\frac{\mathrm{ch}_3(i_*\mathcal O_S(D))}{H^3}\le\frac{5k^2-k-2}{10}+\frac 16$. In case of equality one of the following holds:
        \begin{enumerate}
        	\item $a=3k+1$, one of $b_i$ is equal to $-1-k$, while other three are equal to $-k$;
        	\item $a=3k+2$, all $b_i$ are equal to $-1-k$.
        \end{enumerate}
        \item If $C=5k-2$, then $\frac{\mathrm{ch}_3(i_*\mathcal O_S(D))}{H^3}\le\frac{5k^2-9k+2}{10}+\frac 16$. In case of equality one of the following holds:
        \begin{enumerate}
        	\item $a=3k-1$, one of $b_i$ is equal to $1-k$, while other three are equal to $-k$;
        	\item $a=3k-2$, all $b_i$ are equal to $1-k$.
        \end{enumerate}
	\end{enumerate}
\end{proposition}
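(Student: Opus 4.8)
The plan is to treat this as a purely combinatorial optimization, using Lemma~\ref{GRR} to translate everything into the integers $a,b_1,\dots,b_4$. By Lemma~\ref{GRR},
$$\frac{\mathrm{ch}_3(i_*\OO_S(D))}{H^3}=\frac{a^2-3a-\sum_{i=1}^4 b_i^2-\sum_{i=1}^4 b_i}{10}+\frac16,$$
while the prescribed quantity is $C=3a+\sum_i b_i$. The key simplification is to substitute $\sum_i b_i=C-3a$ into the numerator: the linear terms collapse and one is left with
$$\frac{\mathrm{ch}_3(i_*\OO_S(D))}{H^3}=\frac{a^2-\sum_{i=1}^4 b_i^2-C}{10}+\frac16.$$
Thus, for fixed $C$, maximizing $\mathrm{ch}_3$ is equivalent to maximizing the indefinite quadratic form $g(a,b)=a^2-\sum_{i=1}^4 b_i^2$ over the integer points of the affine hyperplane $\{3a+\sum_i b_i=C\}$.

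First I would show that $g$ really attains a concave maximum on this hyperplane. On the tangent space $\{3a+\sum_i b_i=0\}$ the Cauchy--Schwarz inequality gives $9a^2=(\sum_i b_i)^2\le 4\sum_i b_i^2$, so $g=a^2-\sum_i b_i^2\le -\tfrac54 a^2\le 0$, with equality only at the origin. Hence $g$ is strictly concave along the hyperplane, with a unique real maximizer $P_0=\tfrac{C}{5}(3,-1,-1,-1,-1)$ --- that is, $D$ proportional to the anticanonical class $-K_S=3H'-\sum_i E_i$ --- and maximal value $g(P_0)=C^2/5$.

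Next, for each fixed integer $a$ I would minimize $\sum_i b_i^2$ subject to $\sum_i b_i=C-3a=:s$. Over the integers this minimum is attained exactly by the balanced partitions, in which each $b_i$ equals $\lfloor s/4\rfloor$ or $\lceil s/4\rceil$, and it equals $\tfrac{s^2+r(4-r)}{4}$ with $r\equiv s\pmod 4$, $r\in\{0,1,2,3\}$. Substituting this back, the best value available for a given $a$ is a downward parabola in $a$ peaked at $a=3C/5$, corrected by the bounded term $-\tfrac{r(4-r)}{4}\in\{0,-\tfrac34,-1\}$. Writing $C=5k+\rho$ with $\rho\in\{0,\pm1,\pm2\}$ --- exactly the five cases of the statement --- the optimal $a$ is one of the one or two integers straddling $3C/5$; since the parabola drops by $\tfrac54(a-3C/5)^2$ while the correction swings by at most $1$, a short computation shows that only these finitely many $a$ can be optimal. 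Evaluating $g$ at the corresponding balanced partitions reproduces each of the five bounds, and the two straddling values of $a$ (when $\rho\neq 0$), together with the permutations of the balanced partition, account for precisely the two families (a), (b) of equality configurations listed.

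The conceptual content lies entirely in the reduction of the first paragraph and the concavity of the second; the main obstacle is the bookkeeping of the third step --- proving the balanced-partition formula for $\min\sum_i b_i^2$, carrying out the per-residue enumeration of the optimal $a$, and verifying that the enumerated configurations are exhaustive, so that no stray integer point ties or beats them. This is routine but must be done carefully to capture every equality case, in particular the appearance of two distinct optimal values of $a$ whenever $\rho\neq 0$.
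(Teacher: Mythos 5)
Your proposal is correct and follows essentially the same route as the paper: reduce via Lemma~\ref{GRR} to maximizing $a^2-\sum_i b_i^2$ on the hyperplane $3a+\sum_i b_i=C$, observe that for fixed $a$ the integer optimum is the balanced partition of $C-3a$ (the paper phrases this as the closest lattice point to the all-equal point, via Pythagoras), and then use the downward parabola $f_{C,a}=a^2-\tfrac14(C-3a)^2$ peaked at $a=3C/5$ to rule out all but the one or two straddling integer values of $a$ in each residue class of $C$ mod $5$. Your explicit formula $\tfrac{s^2+r(4-r)}{4}$ for $\min\sum_i b_i^2$ and the Cauchy--Schwarz concavity remark are harmless refinements of the same argument, and the remaining bookkeeping you flag is exactly the per-case computation the paper carries out.
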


\textit{Proof.}
Lemma \ref{GRR} implies that for a fixed $C$ maximizing $\mathrm{ch}_3(i_*\mathcal O_S(D))$ is equivalent to maximizing $Q:=a^2-\sum_{i=1}^4b_i^2$. For a fixed $C=3a+\sum_{i=1}^4b_i$, a fixed $a\in\mathbb R$ and varying $(b_i)\in\mathbb R^4$ the maximal value of $Q$ is attained when all $b_i$ are equal to $\frac{C-3a}{4}$. The values of $b_i$ in Proposition \ref{O_S(D)} are obtained by choosing a point $\textbf b=(b_i)\in\mathbb Z^4$ with $\sum_{i=1}^4b_i=C-3a$ that is closest to the point with all $b_i$ equal to $\frac{C-3a}{4}$ in the usual Euclidean metric. By Pythagoras' Theorem this $\textbf b$ is indeed a point minimizing $\sum_{i=1}^4b_i^2$ and maximizing $Q$ for fixed $C$ and $a$. It remains to prove that maximum of $Q$ can be reached only for the values of $a$ given in the Proposition.

For fixed $C$ and $a$ we have $Q\le f_{C,a}:=a^2-4\left(\frac{C-3a}{4}\right)^2=-\frac 54a^2+\frac 32Ca-\frac{C^2}{4}$. This is a parabola in $a$ with a maximum at $a_0=\frac{3C}{5}$. This proves the Proposition in the case $C=5k,k\in\mathbb Z$. In the case $C=5k+1,k\in\mathbb Z$ $a_0=\frac{3C}{5}$ lies in the interval $(3k,3k+1)$. For the values of $a,b_i$ from Proposition \ref{O_S(D)} (2) we have $Q=5k^2+2k-1$. A calculation gives $f_{5k+1,3k-1}=5k^2+2k-3$. Hence for all $a\le 3k-1$ $Q\le f_{5k+1,a}\le f_{5k+1,3k-1}<5k^2+2k-1$. Also we find $f_{5k+1,3k+2}=5k^2+2k-\frac 94$, hence for all $a\ge 3k+2$ $Q\le 5k^2+2k-\frac 94<5k^2+2k-1$. Hence $a\in\{3k,3k+1\}$. This proves the case (2). The cases (3-5) can be proven completely analogously. $\square$

The cases of rank two semistable sheaves with maximal $\mathrm{ch}_3$ on $\mathbb P^3$ \cite{Sch18} and on a smooth quadric threefold \cite{Fano} lead us to the following conjecture.

\begin{conjecture}\label{conj2}Suppose that $E\in\mathrm{Coh}(X)$ is a Gieseker semistable sheaf with $\mathrm{ch}(E)=2+cH+dH^2+eH^3$.\begin{enumerate}
\item Let $c=-1$ and $d=\frac 12+\frac C5\ll 0$, $C\in\mathbb Z$. Then $e\le-\frac 13+\frac{\mathrm{ch}_3(i_*\mathcal O_S(D))}{H^3}$, where $D$ is a divisor on $S$ from Proposition \ref{O_S(D)} for the given $C=3a+\sum_{i=1}^4b_i$. A general sheaf $E$ with maximal $e$ can be included into an exact sequence
\begin{equation}\label{E1}0\to\mathcal O_X(-1)^{\oplus 2}\to E\to\mathcal O_S(D)\to 0.\end{equation}
\item Let $c=0$ and $d=-\frac 25+\frac C5\ll 0$, $C\in\mathbb Z$. Then $e\le\frac {1}{30}+\frac{\mathrm{ch}_3(i_*\mathcal O_S(D))}{H^3}$, where $D$ is a divisor on $S$ from Proposition \ref{O_S(D)} for the given $C=3a+\sum_{i=1}^4b_i$. A general sheaf $E$ with maximal $e$ can be included into an exact sequence
\begin{equation}\label{E2}0\to\mathcal U\to E\to\mathcal O_S(D)\to 0.\end{equation}
\end{enumerate}
\end{conjecture}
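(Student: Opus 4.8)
The plan is to run the same induction on the discriminant $\overline{\Delta}_H(E)$ used in \cite{Sch18} and \cite{Fano}, with $\mathcal O_X(-1)$ and $\mathcal U$ now playing the roles that $\mathcal O_{\mathbb P^3}(-1)$ in (\ref{Schmidt 1}) and the sheaf $F$ in (\ref{Schmidt 2}) play on $\mathbb P^3$. By Proposition \ref{2-stability} a Gieseker semistable sheaf of positive rank is $\nu_{\alpha,\beta}$-semistable for $\beta<\mu(E)$ and $\alpha\gg0$, so I would treat $E$ as a tilt-semistable object of $\mathrm{Coh}^\beta(X)$ and read off the bound on $e$ from the largest semicircular wall of the class $v=\mathrm{ch}_{\le2}(E)$, exactly as in the case analysis of Section \ref{bounds}.

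The structural heart of the argument is to show that, for $C\ll0$ (equivalently $\overline{\Delta}_H(E)\gg0$), this largest wall is induced by an exact sequence $0\to F\to E\to G\to 0$ in $\mathrm{Coh}^\beta(X)$ with $F\cong\mathcal O_X(-1)^{\oplus2}$ in case (1), $F\cong\mathcal U$ in case (2), and $G$ a pure two-dimensional sheaf having $\mathrm{ch}_0(G)=0$ and $\mathrm{ch}_1(G)=H$. As in Lemmas \ref{U} and \ref{2 0 -2/5} I would first enumerate the numerically admissible destabilizers $F$ with $\mathrm{ch}_0(F)>0$ and $\mu(F)<\mu(E)$, using the constraint $0<\mathrm{ch}_1^{\beta_-(E)}(F)<\mathrm{ch}_1^{\beta_-(E)}(E)$, the rank bound (Theorem \ref{rank bound}), and the discriminant nesting $\overline{\Delta}_H(F)+\overline{\Delta}_H(G)<\overline{\Delta}_H(E)$ of Proposition \ref{actual}. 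The inductive hypothesis then controls the $\mathrm{ch}_3$ of each admissible piece, and one checks that as $C\to-\infty$ the configuration maximizing $\mathrm{ch}_3(E)$ is the asserted one.

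Granting the extension, additivity of the Chern character gives $\mathrm{ch}_3(E)=\mathrm{ch}_3(F)+\mathrm{ch}_3(G)$, where $\mathrm{ch}_3(\mathcal O_X(-1)^{\oplus2})/H^3=-\tfrac13$ and $\mathrm{ch}_3(\mathcal U)/H^3=\tfrac1{30}$ (Lemma \ref{U}); this already accounts for the two constants $-\tfrac13$ and $\tfrac1{30}$ appearing in the conjecture. Since $\mathrm{ch}_{\le1}(G)=(0,H)$, the support of $G$ is a hyperplane section, and I would argue that $\mathrm{ch}_3(G)$ is largest when $G\cong i_*\mathcal O_S(D)$ for a smooth del Pezzo section $S$ and an honest line bundle; Lemma \ref{GRR} together with the optimization carried out in Proposition \ref{O_S(D)} then yields the stated bound on $e$ in each residue class of $C$ modulo $5$.

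The hard part will be this second step. Two difficulties stand out. First, one must rule out higher-rank or otherwise ``larger'' destabilizing blocks $F$: this is exactly where the induction on $\overline{\Delta}_H$ is indispensable, since only it bounds the $\mathrm{ch}_3$ of the candidate pieces and lets one compare configurations. Second, one must prove that a line bundle on a \emph{smooth} $S$ beats every non-reduced or reducible two-dimensional quotient of the same $\mathrm{ch}_{\le2}$; this requires a semicontinuity argument and is the reason the statement is asserted only for a general $E$ and only for $C\ll0$. Finally, for sharpness I would compute $\mathrm{Ext}^1(i_*\mathcal O_S(D),F)$, build the extensions (\ref{E1}) and (\ref{E2}), and verify via the support property and the generalized Bogomolov--Gieseker inequality (Proposition \ref{BMT}) that a general such extension is tilt-stable, hence Gieseker semistable by Proposition \ref{2-stability}.
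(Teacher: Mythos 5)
The statement you are proving is Conjecture \ref{conj2}: the paper does not prove it, and offers only partial evidence (Lemma \ref{our bound}, Lemma \ref{rank one}, Proposition \ref{O_S(D)}, and the previously known case $C=5k$ from \cite[Theorems 4.2, 4.6]{Fano}). Your outline follows exactly the strategy the author has in mind --- induction on $\overline{\Delta}_H$, locating the largest actual wall, identifying the destabilizing sequence with $F\cong\mathcal O_X(-1)^{\oplus 2}$ or $\mathcal U$ and $G$ a two-dimensional sheaf with $\mathrm{ch}_{\le 1}(G)=(0,H)$, then optimizing $\mathrm{ch}_3(G)$ via Lemma \ref{GRR} and Proposition \ref{O_S(D)} --- and your bookkeeping of the constants $-\tfrac13$ and $\tfrac1{30}$ is correct. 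But what you have written is a plan, not a proof, and the two steps you yourself flag as ``the hard part'' are precisely the ones that are open and the reason the statement remains a conjecture.

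Concretely: (a) the paper's Lemma \ref{our bound} only forces the destabilizing subobject and quotient to have rank at most two, and Lemma \ref{rank one} disposes of rank-one subobjects; the case of a rank-two destabilizer (and in particular showing that the optimal one is $\mathcal O_X(-1)^{\oplus 2}$, resp.\ $\mathcal U$, rather than some other rank-two tilt-stable object of smaller discriminant) is nowhere carried out, and your appeal to ``the inductive hypothesis'' presupposes a classification of rank-one and rank-two tilt-semistable objects with maximal $\mathrm{ch}_3$ that is itself not established. (b) The claim that among all torsion quotients $G$ with $\mathrm{ch}_{\le 2}(G)$ fixed the third Chern character is maximized by a line bundle on a \emph{smooth} hyperplane section is asserted, not proved; Proposition \ref{O_S(D)} only optimizes over divisors $D$ on a fixed smooth $S$, and ruling out non-reduced or singular supports (or sheaves on $S$ that are not locally free) requires an argument you do not supply. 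Until (a) and (b) are filled in, the proposal does not constitute a proof of the conjecture; it is a correct restatement of the intended attack together with an honest list of what is missing.
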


Note that, if $C=5k,k\in\mathbb Z$, then $i_*\mathcal O_S(D)\cong (i_*\mathcal O_S)(k)$ is a twisted structure sheaf of $S\subset X$. In this case the moduli spaces of stable sheaves $E$ from (\ref{E1}) and (\ref{E2}) were described in \cite[Theorem 4.2]{Fano} and \cite[Theorem 4.6]{Fano}, respectively. In particular, it was proven that moduli spaces of these sheaves form smooth dense open subsets of rational irreducible components of the moduli scheme of Gieseker semistable sheaves on $X$. Note that in (\ref{E1}) and (\ref{E2}) $S$ denotes a smooth hyperplane section of $X\subset\mathbb P^6$, while in \cite{Fano} we considered a more general case in which $S$ was an arbitrary effective divisor.

Let us give some further evidence for Conjecture \ref{conj2}. Note that for an extension
$$0\to\mathcal O_X(-1)^{\oplus 2}\to E\to G\to 0$$
with $\mathrm{ch}(G)=H+\left(k-\frac{9}{10}\right)H^2+\left(\frac{5k^2-9k+2}{10}+\frac 16\right)H^3,k\in\mathbb Z$ we have $\mathrm{ch}(E)=2-H+\left(k+\frac{1}{10}\right)H^2+\frac{15k^2-27k+1}{30}H^3$.

\begin{lemma}\label{our bound} Let $E\in\mathrm{Coh}^\beta(X)$ be a tilt-semistable object with $\mathrm{ch}(E)=2-H+\left(k+\frac{1}{10}\right)H^2+eH^3$. If $k\le -2$ and $e\ge \frac{15k^2-27k+1}{30}$, then $E$ is destabilized along a semicircular wall with the subobject and the quotient of rank at most two.\end{lemma}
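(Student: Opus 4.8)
The plan is to produce a large semicircular wall via the generalized Bogomolov--Gieseker inequality, and then to bound the rank of the destabilizing object by comparing the size of that wall with the radius estimate of Proposition \ref{actual}(1). Throughout I work with $v=\mathrm{ch}_{\le 2}(E)=(2,-1,k+\frac{1}{10})$, so $\mu(E)=-\frac12$ and $\overline{\Delta}_H(E)=15-100k\ge 215$. Two preliminary facts will be used: since $\beta_-(E)$ solves $\nu_{0,\beta}(E)=0$, we have $H\cdot\mathrm{ch}_2^{\beta_-(E)}(E)=0$ and $H^2\cdot\mathrm{ch}_1^{\beta_-(E)}(E)=\sqrt{\overline{\Delta}_H(E)}$; and every semicircular wall for $v$ meets the vertical line $\beta=\beta_-(E)$, because these walls are nested along the branch of the hyperbola $\nu_{\alpha,\beta}(E)=0$ through $(\beta_-(E),0)$ (Proposition \ref{numerical}(2),(3)).

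First I would locate the wall. By Proposition \ref{numerical}(5) the locus $Q_{\alpha,\beta}(E)=0$ is a semicircle bounding the region $\{Q_{\alpha,\beta}(E)<0\}$. Evaluating along $\beta=\beta_-(E)$, where the term $4(H\cdot\mathrm{ch}_2^\beta)^2$ vanishes, this semicircle meets $\beta=\beta_-(E)$ at the height $\alpha_Q$ with
$$\alpha_Q^2=\frac{6\,(H^2\cdot\mathrm{ch}_1^{\beta_-(E)}(E))\,\mathrm{ch}_3^{\beta_-(E)}(E)}{\overline{\Delta}_H(E)}=\frac{6\,\mathrm{ch}_3^{\beta_-(E)}(E)}{\sqrt{\overline{\Delta}_H(E)}}.$$
Using $e\ge\frac{15k^2-27k+1}{30}$ one checks $\mathrm{ch}_3^{\beta_-(E)}(E)>0$, so $\alpha_Q>0$ and $\{Q_{\alpha,\beta}(E)<0\}$ is non-empty. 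Since $E$ is tilt-semistable for $\alpha\gg0$ while Proposition \ref{BMT} forbids semistability where $Q_{\alpha,\beta}(E)<0$, the object $E$ must be destabilized along a semicircular wall, and there is a largest one (Proposition \ref{numerical}(4)). As $E$ stays tilt-semistable outside that wall, the whole region $\{Q_{\alpha,\beta}(E)<0\}$ lies inside its semidisk; in particular the largest wall meets $\beta=\beta_-(E)$ at height at least $\alpha_Q$.

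Next I bound the rank. By Lemma \ref{stable} the largest wall is induced by a tilt-stable subobject or quotient $F$ with $\mathrm{ch}_0(F)=s>0$ and $\mu(F)<\mu(E)$. If $s\ge 3$, then $\mathrm{ch}_0(F)>\mathrm{ch}_0(E)=2$, and Proposition \ref{actual}(1) (applied to the rank-$s$ factor) bounds the radius by $\rho^2\le\frac{\overline{\Delta}_H(E)}{100\,s(s-2)}\le\frac{\overline{\Delta}_H(E)}{300}$. Because any semicircle of radius $\rho$ meets $\beta=\beta_-(E)$ at height at most $\rho$, the largest wall would then cross $\beta=\beta_-(E)$ below $\sqrt{\overline{\Delta}_H(E)/300}$, contradicting the lower bound $\alpha_Q$ once one knows $\alpha_Q^2>\overline{\Delta}_H(E)/300$, i.e. $\mathrm{ch}_3^{\beta_-(E)}(E)>\overline{\Delta}_H(E)^{3/2}/1800$. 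Here $\mathrm{ch}_3^{\beta_-(E)}(E)$ grows in order like $k^2$ (its leading term is $\frac{5k^2}{2}$, coming from $e$), whereas the right-hand side grows only like $|k|^{3/2}$, so the inequality holds with room to spare (and is checked directly at $k=-2$). Hence $s\le 2$; taking the destabilizing sequence $0\to A\to E\to B\to 0$ with $F\in\{A,B\}$, the two ranks are $s$ and $2-s$, both lying in $\{0,1,2\}$, which is exactly the assertion.

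The main obstacle is the final numerical comparison $\alpha_Q^2>\overline{\Delta}_H(E)/300$: it requires writing $\mathrm{ch}_3^{\beta_-(E)}(E)$ out explicitly in $k$ and verifying the bound uniformly for $k\le -2$; the growth-rate estimate above shows this inequality is far from tight, so it is routine. A secondary point requiring care is that Proposition \ref{actual}(1) is stated for the subobject, while here the rank-$s$ factor $F$ may be the quotient; this is handled by the symmetry of the wall radius under $F\leftrightarrow E/F$, which one reads off from the explicit centre and radius of $W(E,F)$.
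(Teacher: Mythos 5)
Your argument is correct and is essentially the paper's proof: both derive a lower bound on the radius of the largest actual wall from the numerical wall $Q_{\alpha,\beta}(E)=0$ (which the generalized Bogomolov--Gieseker inequality forces to lie inside the destabilizing semidisk) and then rule out destabilizers of rank $\ge 3$ by the upper bound $\rho^2\le\overline{\Delta}_H(E)/300$ from Proposition \ref{actual}(1). The only variation is that you bound the $Q=0$ circle by its height $\alpha_Q$ over the line $\beta=\beta_-(E)$, where $\alpha_Q^2=6\,\mathrm{ch}_3^{\beta_-(E)}(E)/\sqrt{\overline{\Delta}_H(E)}$, rather than computing its radius explicitly in $k$ as the paper does; this is cleaner but makes your final comparison tighter than you suggest, since $\mathrm{ch}_3^{\beta_-(E)}(E)$ carries a negative correction of order $|k|^{3/2}$ (with coefficient $\tfrac{10}{3}$) that is comparable to the right-hand side $\overline{\Delta}_H(E)^{3/2}/1800\approx\tfrac{5}{9}|k|^{3/2}$, so the leading-term heuristic alone does not settle $k=-2$ and the explicit check there (which does succeed, roughly $1.43$ versus $0.72$) is genuinely needed rather than a formality.
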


\textit{Proof.} We can calculate that the radius $\rho_Q$ of the numerical wall $Q_{\alpha,\beta}(E)=0$ satisfies the inequality
$$\frac{\rho_Q}{\overline{\Delta}_H(E)}\ge-\frac{9}{800}k-\frac{11}{2000}-\frac{147}{8000-20000k}+\frac{2401}{-2\cdot 10^6k^3+24\cdot 10^5k^2-96\cdot 10^4k+128000}.$$

Bounding each term, we see that $\frac{\rho_Q}{\overline{\Delta}_H(E)}>\frac{1}{300}$, hence we can apply Proposition \ref{actual} (1) to get the statement of our lemma. $\square$

\begin{lemma}\label{rank one} Let $E\in\mathrm{Coh}^\beta(X)$ be a tilt-semistable object with $\mathrm{ch}(E)=2-H+\left(k+\frac{1}{10}\right)H^2+eH^3$, $k\in\mathbb Z$. If $k\le -2$ and $E$ is destabilized by a subobject $F$ of rank one, then $e\le \frac{15k^2-27k+1}{30}$.
\end{lemma}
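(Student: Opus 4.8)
The plan is to split $\mathrm{ch}_3(E)$ across the two rank-one factors of the destabilizing sequence and bound each with the rank-one estimate proved above. Write the wall as $W=W(E,F)$ and the defining sequence as $0\to F\to E\to G\to 0$ in $\mathrm{Coh}^\beta(X)$, so that $\mathrm{ch}_0(F)=\mathrm{ch}_0(G)=1$ and $\mathrm{ch}_3(E)=\mathrm{ch}_3(F)+\mathrm{ch}_3(G)$ by additivity of the Chern character. Since $\mathrm{ch}_1\in\mathbb Z H$ while $\mu(E)=-\tfrac12$, the slopes $\mu(F)=a_1$ and $\mu(G)=-1-a_1$ are integers placed symmetrically about $-\tfrac12$. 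I treat the case in which the subobject is the factor of smaller slope, $\mu(F)<\mu(E)$ (the other case being analogous), so that $a_1\le-1$ and Proposition \ref{actual}(5) gives $\beta_-(E)<\beta_-(F)$. Recalling $\overline{\Delta}_H(E)=15-100k$ and $\overline{\Delta}_H=10\,c_2$ in rank one, Proposition \ref{actual}(2) reads $\overline{\Delta}_H(F)+\overline{\Delta}_H(G)<\overline{\Delta}_H(E)$ and, with $c_2\in\mathbb Z$, restricts $a_1$ to the finitely many integers with $a_1(a_1+1)+k\le-1$; the inequality $\beta_-(E)<\beta_-(F)$ bounds $c_2(F)$ from above.

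First I would bound each factor. Twisting $F$ and $G$ to first Chern class $0$ (which preserves tilt-semistability) and applying the rank-one bound $\mathrm{ch}_3\le\tfrac{d(d+1)}{2}$ for objects with $\mathrm{ch}=1-dH^2+\cdots$, I obtain, writing $d_F=\tfrac{c_2(F)}5\ge0$ and $d_G=\tfrac{c_2(G)}5\ge0$,
\[\mathrm{ch}_3(F)\le\tfrac{a_1^3}{6}-a_1d_F+\tfrac{d_F(d_F+1)}2,\qquad \mathrm{ch}_3(G)\le\tfrac{(-1-a_1)^3}{6}-(-1-a_1)d_G+\tfrac{d_G(d_G+1)}2.\]
The total discriminant $d_F+d_G$ is fixed by $\mathrm{ch}_2$-additivity, and the sum of the two right-hand sides is convex in $d_F$; its interior critical point lies to the right of the admissible range (a short computation gives the gap $(2a_1+1)\bigl(1-\tfrac{\sqrt{15-100k}}{10}\bigr)>0$ for $a_1\le-1$ and $k\le-2$), so the maximum is attained at $d_F=0$. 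There $F\cong\mathcal O_X(a_1)$ is a line bundle and $G$ carries all the discriminant.

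The arithmetically decisive case is $a_1=-1$. Then $F\cong\mathcal O_X(-1)$, $c_1(G)=0$, $d_G=\tfrac25-k$, and the estimate collapses to
\[e=\mathrm{ch}_3(E)\le-\tfrac16+\tfrac{\left(\tfrac25-k\right)\left(\tfrac75-k\right)}{2}=\tfrac{15k^2-27k+1}{30}+\tfrac{2}{25}.\]
Here the overshoot is exactly $\tfrac2{25}<\tfrac1{10}$, and it is killed by integrality: $e$ ranges in the lattice $\tfrac1{30}-\tfrac k2+\tfrac1{10}\mathbb Z$, on which $\tfrac{15k^2-27k+1}{30}$ is a point, so in fact $e\le\tfrac{15k^2-27k+1}{30}$. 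For $|a_1|\ge2$ the rank-one sum is far too generous, so I would instead exploit that $E$ is tilt-semistable everywhere above $W$: by Propositions \ref{BMT} and \ref{numerical}(5) the semidisk $\{Q_{\alpha,\beta}(E)<0\}$ must lie inside the semidisk bounded by $W$, whence $Q_{\alpha,\beta}(E)\ge0$ along $W$. Since $c_2(F)$ is bounded by Proposition \ref{actual}(5), $W$ runs over finitely many explicit semicircles, and evaluating $Q_{\alpha,\beta}(E)\ge0$ on each forces $e$ strictly below the target, with growing slack as these walls shrink with decreasing $a_1$.

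The main obstacle is fusing these two mechanisms: the rank-one estimate overshoots the target (barely for $a_1=-1$, grossly for $a_1\le-2$), while the generalized Bogomolov--Gieseker bound along $W$ is sharp enough only for $|a_1|\ge2$. The delicate verification is therefore that, as $a_1$ ranges over the finitely many admissible slopes and $k\le-2$, the two bounds together with the $\tfrac1{10}$-integrality of $c_3$ uniformly yield $e\le\tfrac{15k^2-27k+1}{30}$; in particular the single tight case $a_1=-1$ survives only because the margin $\tfrac2{25}$ is smaller than the integrality step $\tfrac1{10}$.
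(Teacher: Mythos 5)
Your main line of attack coincides with the paper's (which defers to \cite[Lemma 3.9]{Sch18}): split $\mathrm{ch}_3(E)$ over the two rank--one factors, apply the rank--one $\mathrm{ch}_3$--bounds, and observe that the extremal configuration $\{\mu(F),\mu(G)\}=\{-1,0\}$ with all of the discriminant carried by the $\mathrm{ch}_1=0$ factor yields exactly $e\le\frac{15k^2-27k+1}{30}+\frac{2}{25}$, after which the $\frac{1}{10}$--integrality of $e-\frac{1}{30}+\frac{k}{2}$ improves this to the stated bound. That part of your argument --- the restriction $a_1(a_1+1)+k\le -1$ from Proposition \ref{actual}(2), the upper bound on $c_2(F)$ from Proposition \ref{actual}(5), the convexity argument placing the maximum at $d_F=0$, and the final identity with the $\frac{2}{25}$ overshoot --- is correct and reproduces precisely the computation displayed in the paper's proof.

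The gap is your treatment of the slopes $\mu(F)\le -2$. You rightly note that the rank--one sum overshoots grossly there (by roughly $-k$), but your substitute --- ``evaluate $Q_{\alpha,\beta}(E)\ge 0$ on each admissible wall'' --- is only asserted, and the number of walls to be checked grows with $|k|$, so as written it is not a verification uniform in $k$. The standard way to close this branch, and what the cited \cite[Lemma 3.9]{Sch18} in combination with an estimate of the type proved in Lemma \ref{our bound} actually supplies, is to run the $Q$--argument once and in the opposite direction: if $e$ exceeded the target, the semidisk $\{Q_{\alpha,\beta}(E)<0\}$ would be nonempty and large, and since $E$ is tilt-semistable on and above the destabilizing wall $W$, that semidisk must be contained in the disk bounded by $W$; hence $W$ itself is large and its right endpoint on the $\beta$-axis exceeds $-2$. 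On the other hand Proposition \ref{actual}(3) forces that right endpoint to be at most $\min(\mu(F),-1-\mu(F))$, so $\mu(F)\in\{-1,0\}$ and one is reduced to the single case you did handle. The one uniform inequality missing from your write-up is therefore the estimate that the right endpoint of the numerical wall $Q_{\alpha,\beta}(E)=0$, evaluated at $e$ equal to the target value, lies to the right of $-2$ for all $k\le -2$; this is a computation of exactly the kind carried out in the proof of Lemma \ref{our bound}.
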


\textit{Proof.} Follows along the same lines as the proof of \cite[Lemma 3.9]{Sch18}. Note that in our case
$$\frac 12\left(\frac{H\cdot\mathrm{ch}_2(E)}{H^3}\right)^2+\frac{H\cdot\mathrm{ch}_2(E)}{H^3}+\frac{5}{24}=\frac{15k^2-27k+1}{30}+\frac{2}{25}.$$
Since the step by which $\mathrm{ch}_3(E)$ can change for a fixed $\mathrm{ch}_{\le 2}(E)$ is equal to $\frac{H^3}{10}$, the bound proven in \cite[Lemma 3.9]{Sch18} suffices to prove our lemma. $\square$

\vspace{5mm}

\noindent
Danil A.~Vassiliev\\
National Research University Higher School of Economics, Russian Federation\\
AG Laboratory, HSE, 6 Usacheva str., Moscow, Russia, 119048\\
\textit{E-mail}:{\ danneks@yandex.ru}

\end{document}